\newtheorem{theorem}{Theorem}[section]
\newtheorem{lemma}[theorem]{Lemma}
\newtheorem{proposition}[theorem]{Proposition}
\newtheorem{corollary}[theorem]{Corollary}
\newtheorem{question}[theorem]{Question}%
\theoremstyle{definition}
\newtheorem{definition}[theorem]{Definition}
\newtheorem{assumption}[theorem]{Assumption}
\theoremstyle{remark}
\newtheorem{remark}[theorem]{Remark}
\newcommand{\CC}{\mathbb{C}}
\newcommand{\RR}{\mathbb{R}}
\newcommand{\bfs}{\mathbf{s}}
\newcommand{\calA}{\mathcal{A}}
\newcommand{\calD}{\mathcal{D}}
\newcommand{\calP}{\mathcal{P}}
\newcommand{\calR}{\mathcal{R}}
\newcommand{\frakt}{\mathfrak{t}}
\newcommand{\id}{\operatorname{id}}
\newcommand{\dom}{\operatorname{dom}}
\newcommand{\double}{\mathsf{D}}
\newcommand{\D}{\mathrm{d}}
\newcommand{\scal}{{\rm scal}}
\newcommand{\rmc}{\mathrm{c}}
\newcommand{\rmS}{\mathrm{S}}
\newcommand{\rmT}{\mathrm{T}}
\newcommand{\upper}{\uppercase\expandafter}
\newcommand{\romnu}{\romannumeral}
\newcommand{\p}{\partial}
\newcommand{\pM}{{\p M}}
\newcommand{\supp}{\operatorname{supp}}
\newcommand{\End}{\operatorname{End}}
\newcommand{\rank}{\operatorname{rank}}
\newcommand{\dist}{\operatorname{dist}}
\newcommand{\cut}{{\rm cut}}
\newcommand{\Tr}{\operatorname{Tr}}
\newcommand{\spec}{\operatorname{spec}}
\newcommand{\spf}{\operatorname{sf}}
\newcommand{\IM}{\operatorname{im}}
\newcommand{\slaD}{\slashed{D}}
\newcommand{\slaS}{\slashed{S}}
\newcommand{\tilD}{\tilde D}
\newcommand{\tilE}{\tilde E}
\newcommand{\tilF}{\tilde F}
\newcommand{\tilS}{\tilde S}
\newcommand{\loc}{{\rm loc}}
\begin{document}

\normalsize

\title[Odd-dimensional long neck problem via spectral flow]{The odd-dimensional long neck problem via spectral flow}

\author[Pengshuai Shi]{Pengshuai Shi${}^\ast$}
\address{School of Mathematics and Statistics, Beijing Institute of Technology, Beijing 100081, P. R. China}

\email{shipengshuai@bit.edu.cn, pengshuai.shi@gmail.com}

\subjclass[2020]{Primary 53C27; Secondary 53C21, 53C23, 58J30, 58J32}

\keywords{long neck problem, spin manifold, spectral flow, positive scalar curvature, Llarull's theorem}


\begin{abstract}
In this paper, we establish a scalar-mean curvature comparison theorem for the long neck problem on odd-dimensional spin manifolds. This extends previous work of Cecchini and Zeidler, and gives a complete answer to Gromov's long neck problem in terms of spin manifolds. As a related question, we prove a quantitative version of Llarull's theorem on non-compact spin manifolds. Our results are derived by studying the spectral flow of a family of Callias operators.
\end{abstract}

\maketitle


\section{Introduction}\label{S:intro}

A central theme in differential geometry is the study of Riemannian manifolds with various curvature conditions. One of the actively-developing directions is the study of manifolds with positive scalar curvature. On spin manifolds, there is a fundamental method based on Lichnerowicz vanishing theorem and index theory of spin Dirac operators to deal with such questions. When the manifold is equipped with an area-decreasing map structure, we have the following Llarull's rigidity theorem.

\begin{theorem}[\cite{Llarull98}]\label{IT:Llarull}
Let $(M,g)$ be an $n$-dimensional ($n\ge 3$) (for 2-dimensional case, the map $f$ should be distance-decreasing) connected closed Riemannian spin manifold such that the scalar curvature $\scal_g\ge n(n-1)$. Suppose there exists a smooth area-decreasing map $f$ from $M$ to the standard unit sphere $\rmS^n$ (which means $|f^*\omega|\le|\omega|$ for any two-form $\omega$ on $\rmS^n$) of non-zero degree. Then $f$ is an isometry.
\end{theorem}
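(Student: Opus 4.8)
The plan is to run a Lichnerowicz--Bochner argument for a spin Dirac operator twisted by the pulled-back spinor bundle of the round sphere, using the scalar-curvature hypothesis together with the area-decreasing condition to force the relevant Weitzenb\"ock curvature endomorphism to be nonnegative, and then to read off rigidity from the equality case.

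\textbf{Twisted Dirac operator.} Let $\mathcal{S}_M$ and $\mathcal{S}_{\rmS^n}$ denote the spinor bundles of $(M,g)$ and of the unit sphere. Put $E:=f^*\mathcal{S}_{\rmS^n}$, with the pulled-back connection, and let $D_E$ be the Dirac operator on $\mathcal{S}_M\otimes E$. The twisted Lichnerowicz formula reads
\[
D_E^2=\nabla^*\nabla+\tfrac14\scal_g+\mathfrak{R}^E,\qquad \mathfrak{R}^E=\tfrac12\sum_{i,j}c(e_i)c(e_j)\otimes R^E(e_i,e_j),
\]
where $R^E=f^*R^{\mathcal{S}_{\rmS^n}}$ and $\{e_i\}$ is a local orthonormal frame.

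\textbf{The curvature estimate (the crux).} Fix $x\in M$ and choose orthonormal frames so that $df_x(e_i)=\lambda_i\varepsilon_i$ with singular values $\lambda_i\ge 0$; the hypothesis that $f$ is area-decreasing says precisely $\lambda_i\lambda_j\le 1$ for $i\ne j$. Since $\rmS^n$ has constant curvature $1$, a direct computation with the curvature of $\mathcal{S}_{\rmS^n}$ rewrites the twisting term as a completed square,
\[
\mathfrak{R}^E_x=\tfrac14\Big(\textstyle\sum_i\lambda_i^2-P^2\Big),\qquad P:=\sum_i\lambda_i\,c(e_i)\otimes c(\varepsilon_i),
\]
with $P$ self-adjoint and $\|P\|\le\sum_i\lambda_i$. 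Consequently
\[
\tfrac14\scal_g(x)+\mathfrak{R}^E_x\;\ge\;\tfrac14\Big(\scal_g(x)+\textstyle\sum_i\lambda_i^2-\big(\sum_i\lambda_i\big)^2\Big)\;=\;\tfrac14\Big(\scal_g(x)-2\textstyle\sum_{i<j}\lambda_i\lambda_j\Big)\;\ge\;0,
\]
using $\lambda_i\lambda_j\le 1$ and $\scal_g\ge n(n-1)=2\binom{n}{2}$. If this bound is saturated on a nonzero vector at $x$, then tracing back the inequalities forces $\|P\|=\sum_i\lambda_i$ and $\scal_g(x)+\sum_i\lambda_i^2=(\sum_i\lambda_i)^2$; hence $\scal_g(x)=n(n-1)$, every $\lambda_i\lambda_j=1$, and --- because $n\ge 3$ --- every $\lambda_i=1$, i.e.\ $df_x$ is a linear isometry. (For $n=2$ the pairwise products can equal $1$ without the $\lambda_i$ being $1$, which is why one must assume $f$ distance-decreasing there.) This estimate, with its sharp constant and equality analysis, is the main point and the hardest step.

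\textbf{Index, Bochner vanishing, and rigidity.} For $n$ even, the Atiyah--Singer index theorem evaluates the $\mathbb{Z}/2$-graded index of $D_E$ to a nonzero multiple of $\deg f$ (using $\hat{A}(\rmS^n)=1$ and that $\int_{\rmS^n}\ch(\mathcal{S}^+-\mathcal{S}^-)\ne 0$); for $n$ odd one reaches the same conclusion after passing to the $\mathrm{Cl}_1$-linear refinement of the index, or by a suspension to even dimension, which we do not detail. In either case $\ker D_E\ne 0$; pick $\phi\ne 0$ with $D_E\phi=0$. Integrating the Lichnerowicz formula over the closed manifold $M$ gives $0=\|\nabla\phi\|_{L^2}^2+\int_M\langle(\tfrac14\scal_g+\mathfrak{R}^E)\phi,\phi\rangle$, and since the curvature term is pointwise $\ge 0$ we conclude $\nabla\phi\equiv 0$ and $(\tfrac14\scal_g+\mathfrak{R}^E)\phi\equiv 0$. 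A nonzero parallel spinor vanishes nowhere on the connected manifold $M$, so the equality case of the estimate holds at every point: $\scal_g\equiv n(n-1)$ and $df_x$ is an isometry for all $x$. Thus $f$ is a local isometry between closed manifolds; as $\rmS^n$ is simply connected, $f$ is a Riemannian covering onto $\rmS^n$, and a covering of nonzero degree onto a simply connected target is a diffeomorphism. Therefore $f$ is an isometry. The main obstacle is the curvature estimate above; the index computation is routine once the parity issue is dispatched.
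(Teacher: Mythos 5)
Your curvature estimate is correct and is the heart of Llarull's original argument. With $E=f^*\mathcal S_{\rmS^n}$ one indeed has $\mathfrak R^E=\tfrac14\bigl(\sum_i\lambda_i^2-P^2\bigr)$ for $P=\sum_i\lambda_i\,c(e_i)\otimes c(\varepsilon_i)$, which is self-adjoint of norm at most $\sum_i\lambda_i$ because each $c(e_i)\otimes c(\varepsilon_i)$ is a self-adjoint involution; hence $\tfrac14\scal_g+\mathfrak R^E\ge\tfrac14\bigl(\scal_g-2\sum_{i<j}\lambda_i\lambda_j\bigr)\ge0$, and your equality analysis (all $\lambda_i\lambda_j=1$, hence all $\lambda_i=1$ once $n\ge3$) is right. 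The even-dimensional chain --- graded index $=\deg(f)\int_{\rmS^n}\ch(\mathcal S^+-\mathcal S^-)=\pm2\deg(f)\ne0$, a harmonic twisted spinor, parallelism and nowhere-vanishing, pointwise rigidity, and the covering argument onto the simply connected sphere --- is complete modulo routine details.

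The genuine gap is the odd-dimensional case, which you dispatch in one clause, and neither suggested fix is routine. Suspension fails outright: $M\times\rmS^1$ has $\scal=n(n-1)$, strictly below the threshold $(n+1)n$ needed to run the even-dimensional theorem against a map to $\rmS^{n+1}$, so the reduction does not apply (Llarull's own odd-dimensional argument is a separate, more delicate construction). As for a ``$\mathrm{Cl}_1$-linear refinement'': for a complex twist on an odd-dimensional closed spin manifold the numerical index of a single operator vanishes identically --- exactly the obstruction stressed in the introduction of this paper --- and the twisting bundle $f^*\mathcal S_{\rmS^n}$ is itself an even-dimensional device, since its $K$-theoretic content sits in $K^0(\rmS^n)$. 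The correct odd-dimensional substitute, used here and in Li--Su--Wang, is to twist by a trivial bundle $E_0$ carrying the family of connections $\D+r\bar\rho^{-1}[\D,\bar\rho]$ attached to a unitary $\bar\rho$ generating $K^1(\rmS^n)$, and to replace the index by the spectral flow of $D(r)=(1-r)D+r\rho^{-1}D\rho$; the degree then enters through \eqref{E:spf=deg}, $\spf(\slaD_{\Theta^*E_0}(0),\Theta^*\bar\rho)=-\deg(\Theta)$, while the curvature estimate $\calR^{\Theta^*E_0}(r)\ge-\tfrac14a(x)\,n(n-1)$ must be proved for every $r\in[0,1]$ (strict unless $r=\tfrac12$), as recalled in Section~\ref{SS:GL pair}. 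A nonzero spectral flow produces a kernel element of some $D(r)$, to which your Bochner argument then applies, but one must verify the equality case for the deformed connection at that particular $r$. Without some such device the proof does not close for odd $n$, which is precisely the regime this paper is concerned with.
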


This is an important result in the comparison geometry of scalar curvature. The proof utilizes the method on spin manifolds mentioned above. There are several generalizations of Llarull's theorem, including \cite{GoetteSem02} with the sphere replaced by a manifold with non-negative curvature operator, \cite{Lott21pams} for manifolds with boundary, \cite{Su19,SuWangZhang22} for foliated manifolds, \cite{CHS22lip} for metrics of low regularity and Lipschitz continuous maps, \cite{CWXZ24} for 4-dimensional non-spin manifolds (in the case of distance-decreasing maps), and \cite{Zhang20,LiSuWangZhang24} for non-compact manifolds (see Theorem~\ref{IT:Llarull non-cpt} below), etc.

In an earlier version of \cite{Gromov23Four}, Gromov proposed a question about metric inequalities in this area-decreasing map setting on manifolds with boundary, which is called the \emph{long neck problem}.

\begin{question}[Long neck problem]\label{IQ:long neck}
Let $(M,g)$ be an $n$-dimensional connected compact Riemannian manifold with boundary. Let $f:M\to\rmS^n$ be a smooth area-decreasing map that is locally constant near the boundary. Suppose $\scal_g\ge n(n-1)$ on $M$. Find a lower bound for the distance between $\pM$ and $\supp(\D f)$ which forces the degree of $f$ to be zero.
\end{question}

In the realm of spin manifolds, by developing an index theory on incomplete manifolds, Cecchini \cite{Cecchini20LN} obtained the following long neck principle.

\begin{theorem}[{\cite[Theorem~A]{Cecchini20LN}}]\label{IT:long neck Cecchini}
Let $(M,g)$ be an $n$-dimensional connected compact Riemannian spin manifold with boundary. Let $f:M\to\rmS^n$ be a smooth strictly area-decreasing map that is locally constant near the boundary. When $n$ is odd, $f$ is further required to be constant near the boundary. Suppose $\scal_g\ge n(n-1)$ on $\supp(\D f)$ and $\scal_g\ge\sigma^2n(n-1)$ on $M$ for some $\sigma>0$. If
\[
\dist_g(\supp(\D f),\pM)>\frac{\pi}{\sigma n},
\]
then $\deg(f)=0$.
\end{theorem}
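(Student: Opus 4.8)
The plan is to argue by contradiction: assuming $\deg(f)\neq0$, I would attach to $(M,g,f)$ a Fredholm Dirac-type operator whose index is a fixed nonzero multiple of $\deg(f)$, and then use the curvature hypotheses together with the hypothesis on the length of the neck to prove that this operator is invertible, forcing its index --- and hence $\deg(f)$ --- to vanish.

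First, the twisting data. Let $\slaS$ be the spinor bundle of the round $\rmS^n$ (for $n$ even, $\ZZ/2$-graded: $\slaS=\slaS^+\oplus\slaS^-$), and set $E:=f^*\slaS$ with the pulled-back Hermitian connection. On $\slaS_M\otimes E$ one has $\slaD_E^2=\nabla^*\nabla+\tfrac14\scal_g+\calR^E$, and Llarull's curvature estimate for area-decreasing maps into $\rmS^n$ gives $\calR^E\geq-\tfrac14n(n-1)$ pointwise, with $\calR^E=0$ wherever $f$ is locally constant (in particular off $\supp(\D f)$) and with \emph{strict} inequality wherever $\D f\neq0$, since $f$ is strictly area-decreasing. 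As $\supp(\D f)$ is compact and $\scal_g\geq n(n-1)$ there, this gives a uniform bound $\tfrac14\scal_g+\calR^E\geq\delta_0>0$ on $\supp(\D f)$, whereas on the complement --- which contains the collar $\{0<\rho<L\}$, with $\rho:=\dist_g(\cdot,\pM)$ and $L:=\dist_g(\supp(\D f),\pM)$ --- one has only $\tfrac14\scal_g+\calR^E=\tfrac14\scal_g\geq\tfrac14\sigma^2n(n-1)$.

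Second, turn the degree into an index. Replace the boundary by a potential: form a Dirac operator with potential $\calD_\psi$ on a bundle built from $\slaS_M\otimes E$, where $\psi=\psi(\rho)$ is an admissible profile with $\psi\to+\infty$ as $\rho\to0$ (for $n$ even the $\ZZ/2$-grading of $\slaS_M$ suffices; for $n$ odd one enlarges by an auxiliary $\ZZ/2$-graded $\CC^2$ and obtains a genuine Callias operator). The blow-up of $\psi$ at $\pM$ makes $\calD_\psi$ essentially self-adjoint and Fredholm on the incomplete manifold $M\setminus\pM$ --- this is Cecchini's incomplete-manifold/Callias index theory --- and a relative index computation, using that $E$ is trivial near $\pM$, identifies $\ind\calD_\psi=c_n\deg(f)$ with $c_n\neq0$. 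Here the hypothesis that $f$ be \emph{constant} (not merely locally constant) near $\pM$ when $n$ is odd is precisely what makes $E$ trivial there, so that the odd-dimensional Callias construction and its index formula apply; relaxing this to ``locally constant'' is what the present paper achieves, by replacing this single index with the spectral flow of a family.

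Finally, the Bochner step. Squaring, $\calD_\psi^2\geq\nabla^*\nabla+\tfrac14\scal_g+\calR^E+\psi^2-|\psi'|$, the commutator contributing $-|\psi'|$ since $|\D\rho|=1$. On $\supp(\D f)$ one keeps $\psi$ slowly varying and uses $\delta_0>0$; on the collar one needs a profile with $\psi\to+\infty$ at $\rho=0$, finite at $\rho=L$, and
\[
\tfrac14\sigma^2n(n-1)+\psi^2-|\psi'|>0 .
\]
The naive choice $\psi(\rho)=a\cot(a\rho)$ with $a=\tfrac12\sigma\sqrt{n(n-1)}$ solves this over an interval of length $\tfrac{\pi}{\sigma\sqrt{n(n-1)}}$, already proving the statement --- but with the weaker threshold $\tfrac{\pi}{\sigma\sqrt{n(n-1)}}$ instead of $\tfrac{\pi}{\sigma n}$. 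To reach the stated threshold one upgrades the Bochner step on the collar by Friedrich's eigenvalue trick --- testing against a modified spinor connection $\nabla^t_X=\nabla_X+t(\rho)c(X)$, which via $\sum_i c(e_i)^2=-n$ replaces the effective budget $\tfrac14\scal_g$ by $\tfrac{n}{4(n-1)}\scal_g\geq\tfrac14\sigma^2n^2$ on the neck --- so that the governing ODE, now for $t(\rho)$ coupled with the Callias potential, carries the coefficient $\tfrac12\sigma n$ and the gain becomes $\tfrac{\pi}{\sigma n}$. Fitting this refinement into the Callias/incomplete-manifold index framework --- where establishing Fredholmness and the relative index theorem is already the analytic heart of the proof --- is the main task; the remaining steps are routine. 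Granting it, for $L>\tfrac{\pi}{\sigma n}$ one gets $\calD_\psi^2>0$ on all of $M$ (the $\pM$-term in Green's formula being killed by the blow-up of $\psi$), hence $\ker\calD_\psi=0$ and $\ind\calD_\psi=0$, contradicting $c_n\deg(f)\neq0$. Therefore $\deg(f)=0$.
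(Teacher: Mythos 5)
This statement is quoted from \cite{Cecchini20LN}; the present paper does not reprove it but supersedes it via Theorem~\ref{IT:long neck CeccZeid} ($n$ even) and Corollary~\ref{IC:long neck} ($n$ odd), so your attempt has to be measured against Cecchini's original argument and against the paper's spectral-flow method. For $n$ \emph{even} your outline is essentially Cecchini's route and is viable: one twists by a Gromov--Lawson pair built from $f^*\slaS_{\rmS^n}$ (the index localizes to $\supp(\D f)$ through the pair $(E,F)$ and the involution $\theta$, not through the grading of $\slaS_M$ alone, but that is bookkeeping), obtains a Fredholm Callias-type operator of index $c_n\deg(f)$, and the step you defer as ``the main task''---the $\tfrac{n}{n-1}$ improvement of the Bochner term via the Penrose operator, made compatible with the potential---is exactly Proposition~\ref{P:sp est Callias} (= \cite[Theorem~4.3]{CeccZeid24GT}), so it is available; with $\psi=\tfrac{1}{2}\sigma n\cot(\tfrac{1}{2}\sigma n\rho)$ on the neck the threshold $\pi/(\sigma n)$ comes out as you say.

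The genuine gap is the case $n$ odd, which is the case this paper exists to treat. The operator you describe has index \emph{zero} there. If the involution $\theta$ acts only on an auxiliary $\ZZ/2$-graded $\CC^2$ factor, then the potential $\psi\theta$ does not couple to $f$ at all and its index cannot detect $\deg(f)$. If instead $\theta$ comes from a Gromov--Lawson pair as in Section~\ref{SS:Callias GL pair}, the relative index theorem identifies $\ind\calD_\psi$ with the index of the spin Dirac operator on the closed odd-dimensional double $\double M$ twisted by $V(E,F)$, which vanishes identically---this is precisely the obstruction recorded in the introduction, and the reason the paper replaces the index by the spectral flow $\spf(\slaD_{V(E,F)},\tilde{\rho})=-\deg(f)\ne0$ of Theorem~\ref{T:GL pair spf} and Lemma~\ref{L:GL pair}. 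Your appeal to ``the odd-dimensional Callias construction and its index formula'' therefore does not close the argument: Cecchini's odd-dimensional case rests on a genuinely different device (which is exactly where the hypothesis that $f$ be constant, rather than merely locally constant, near $\pM$ enters in an essential way), and your sketch does not supply it. As written, the contradiction ``$0=\ind\calD_\psi=c_n\deg(f)\neq0$'' is only established for $n$ even.
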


Later in \cite{CeccZeid24GT}, Cecchini and Zeidler exploited a refined approach to this question (and others) using the method of index theory for Callias operators under local boundary conditions. The advantage of their approach is that they are able to bring the mean curvature of the boundary into the picture and provide a deeper understanding of the long neck problem. To be precise, they proved the following scalar-mean curvature comparison theorem for the long neck problem.

\begin{theorem}[{\cite[Theorem~1.4]{CeccZeid24GT}}]\label{IT:long neck CeccZeid}
Let $(M,g)$ be an $n$-dimensional ($n\ge2$ even) connected compact Riemannian spin manifold with boundary. Let $f:M\to\rmS^n$ be a smooth area-decreasing map that is locally constant near $\pM$. Suppose $\scal_g\ge n(n-1)$ and the mean curvature (in this paper, we adopt the convention that the unit sphere $\rmS^{n-1}$ has mean curvature 1 as the boundary of the unit ball in $\RR^n$) $H_g\ge-\tan(\frac{1}{2}nl)$ for some $l\in(0,\frac{\pi}{n})$. If
\[
\dist_g(\supp(\D f),\pM)\ge l,
\]
then $\deg(f)=0$.
\end{theorem}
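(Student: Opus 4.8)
\emph{Strategy.} The plan is to argue by contradiction: assuming $\deg(f)\ne 0$, I would construct a self-adjoint Callias-type operator $\mathcal{B}$ on $M$, equipped with a local boundary condition, that is Fredholm with $\ind(\mathcal{B}^+)$ a nonzero multiple of $\deg(f)$, yet whose kernel is forced to be zero by the curvature hypotheses; these two facts contradict each other. For the operator, let $\mathcal{S}=\mathcal{S}^+\oplus\mathcal{S}^-$ be the chirality-graded spinor bundle of $\rmS^n$, set $E:=f^*\mathcal{S}$ with the pulled-back Hermitian metric and connection, and form the twisted Dirac bundle $S_M\hat\otimes E$ with Dirac operator $\slaD_E$. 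Let $t:=\dist_g(\cdot,\pM)$, smooth on the collar $U_l:=\{t<l\}$; by hypothesis $U_l\cap\supp(\D f)=\varnothing$, so $E$ is flat on $U_l$. On $U_l$ choose an odd, self-adjoint, parallel involution $\Phi=\mathrm{id}_{S_M}\hat\otimes\psi_E$ of $S_M\hat\otimes E$ (e.g.\ $\psi_E$ a suitably rescaled Clifford multiplication by a locally constant unit tangent field along $f|_{U_l}$), which anticommutes with Clifford multiplication on $M$. Pick $\varphi\colon[0,\infty)\to\RR$ with $\varphi\equiv 0$ on $[l,\infty)$ and, on $[0,l]$, equal to the solution of the Riccati equation $\varphi'+\varphi^2=-\tfrac{n^2}{4}$ with $\varphi(l)=0$, i.e.\ $\varphi(t)=\tfrac n2\tan\!\big(\tfrac n2(l-t)\big)$; the constraint $l<\tfrac\pi n$ is exactly what makes $\varphi$ finite and smooth on $[0,l]$, and then $\varphi(0)=\tfrac n2\tan(\tfrac n2 l)$. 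Set $\mathcal{B}:=\slaD_E-\varphi(t)\,\Phi$, together with the chirality-type local boundary condition at $\pM$ twisted by the angle determined by $\varphi(0)$.

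\emph{Index.} Since the potential $-\varphi\Phi$ is supported in $U_l$, which is disjoint from $\supp(\D f)$, and $f$ is constant near $\pM$ (so $E$ is trivial there), $\mathcal{B}$ with its boundary condition is Fredholm, and scaling the potential to $0$ while straightening the boundary condition is a homotopy through Fredholm operators; hence $\ind(\mathcal{B}^+)=\ind(\slaD_E^+)$. By the relative index formula this equals $\int_M\hat A(M)\,f^*\!\ch(\mathcal{S}^+-\mathcal{S}^-)=k_n\,\deg(f)$, where $k_n=\langle\ch_n(\mathcal{S}^+-\mathcal{S}^-),[\rmS^n]\rangle\ne 0$, the boundary contributions cancelling in the difference $\mathcal{S}^+-\mathcal{S}^-$ because $f$ is constant near $\pM$; this is the mechanism used in \cite{Cecchini20LN,CeccZeid24GT}. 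Thus $\ind(\mathcal{B}^+)\ne 0$, so there is a nonzero section $s$ in the domain with $\mathcal{B}s=0$.

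\emph{Vanishing.} Because $\Phi$ anticommutes with Clifford multiplication, squaring $\mathcal{B}$ produces no first-order error, and Green's formula for the chosen boundary condition gives
\[
0=\|\mathcal{B}s\|_{L^2}^2=\|\nabla s\|_{L^2}^2+\int_M\langle\mathcal{K}s,s\rangle+\int_{\pM}\langle\mathfrak{B}s,s\rangle,
\]
with $\mathcal{K}=\tfrac{\scal_g}{4}+\mathcal{R}^E$ on $\supp(\D f)$ and $\mathcal{K}=\tfrac{\scal_g}{4}+\varphi^2-\varphi'\,c(\nabla t)\Phi$ on $U_l$ (where $\mathcal{R}^E=0$), and $\mathfrak{B}$ combining the tangential boundary Dirac operator, the mean-curvature term $\tfrac{n-1}{2}H_g$, and $\varphi(0)$. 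On $\supp(\D f)$, Llarull's pointwise curvature estimate for area-decreasing maps \cite{Llarull98} ($\mathcal{R}^E\ge-\tfrac{n(n-1)}{4}$) together with $\scal_g\ge n(n-1)$ gives $\mathcal{K}\ge 0$. On the collar, an energy estimate along the normal geodesics, using the Riccati identity for $\varphi$ to absorb the tangential Dirac operator into the interior, shows that the collar integral plus $\int_{\pM}\langle\mathfrak{B}s,s\rangle$ is $\ge\int_{U_l}\tfrac{\scal_g-n(n-1)}{4}|s|^2\ge 0$, provided $H_g\ge-\tan(\tfrac n2 l)$ (matched to $\varphi(0)=\tfrac n2\tan(\tfrac n2 l)$). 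Hence $\nabla s=0$ and all error terms vanish; being parallel, $\slaD_E s=0$, so $\mathcal{B}s=0$ forces $\varphi(t)\Phi s=0$, whence $s\equiv 0$ on the non-empty open set $\{0<t<l\}$ and then $s\equiv 0$ on the connected manifold $M$ — contradicting $\ind(\mathcal{B}^+)\ne 0$. Therefore $\deg(f)=0$.

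\emph{Main obstacle.} The delicate point is the collar estimate: one must show that $\int_{\pM}\langle\mathfrak{B}s,s\rangle$, a priori unbounded below because of the tangential Dirac operator, becomes nonnegative once combined with the collar integral, and that precisely the threshold $l<\tfrac\pi n$ and the sharp mean-curvature bound $-\tan(\tfrac n2 l)$ (rather than weaker constants) emerge from the frequency-$\tfrac n2$ Riccati equation. One must also verify that the local boundary condition is elliptic and is preserved along the homotopy used in the index computation, and handle the transition of $\varphi$ at $t=l$. The remaining ingredients — Llarull's twist-curvature estimate and the relative index theorem — are standard.
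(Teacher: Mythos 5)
Your architecture is the right one and is essentially that of Cecchini--Zeidler (and of the odd-dimensional argument in this paper): a Callias-type operator $D+\psi\theta$ with a distance-to-the-boundary potential solving the Riccati equation $\varphi'+\varphi^2=-\tfrac{n^2}{4}$, a chiral local boundary condition whose boundary term pairs $\varphi(0)=\tfrac n2\tan(\tfrac n2 l)$ against the mean curvature, an index equal to a nonzero multiple of $\deg(f)$, and a vanishing theorem via Weitzenb\"ock plus Llarull's curvature estimate. (The paper packages the involution via a Gromov--Lawson pair $E\oplus F^{\rm op}$ with $\theta$ swapping the factors, which makes the relative index computation and the doubling argument cleaner than your single-bundle $\Phi=\id\hat\otimes\psi_E$, but that is a difference of bookkeeping, not of substance.)

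There is, however, a genuine quantitative gap in your ``Vanishing'' step, precisely at the point you flag as delicate. Your displayed identity uses the bare Lichnerowicz term: $0=\|\nabla s\|^2+\int_M\langle\calK s,s\rangle+\int_{\pM}\langle\mathfrak{B}s,s\rangle$ with $\calK=\tfrac{\scal_g}{4}+\varphi^2-\varphi'\,\rmc(\nabla t)\Phi$ on the collar. Since $\varphi^2-|\varphi'|=-\tfrac{n^2}{4}$ identically (that is what the Riccati equation says), the pointwise lower bound on the collar is $\tfrac{n(n-1)}{4}-\tfrac{n^2}{4}=-\tfrac n4<0$, so the claimed inequality ``collar integral $+$ boundary term $\ge\int_{U_l}\tfrac{\scal_g-n(n-1)}{4}|s|^2$'' fails by a margin of $\tfrac n4|s|^2$ pointwise, and no manipulation of the tangential boundary Dirac operator can recover it (the chiral boundary condition already kills the adapted-operator term in Green's formula; the deficit is in the interior, not on the boundary). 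The missing ingredient is the Friedrich/Penrose refinement $|\nabla u|^2=|\calP u|^2+\tfrac1n|Du|^2$ (equation \eqref{E:Penrose} here), which replaces $\|\nabla s\|^2+\int\tfrac{\scal_g}{4}|s|^2$ by $\tfrac{n}{n-1}\big(\|\calP s\|^2+\int\tfrac{\scal_g}{4}|s|^2\big)$ and simultaneously upgrades the boundary coefficient from $\tfrac{n-1}{2}H$ to $\tfrac n2H$; this is exactly Proposition~\ref{P:sp est Callias} (Theorem~4.3 of \cite{CeccZeid24GT}). With it, $\tfrac{n}{n-1}\cdot\tfrac{n(n-1)}{4}=\tfrac{n^2}{4}$ cancels $\varphi^2-|\varphi'|$ exactly, and $\tfrac n2H+\varphi(0)\ge0$ is exactly the hypothesis $H_g\ge-\tan(\tfrac12 nl)$; the rigidity discussion (equality in the Penrose term plus $(\rmc(\D\varphi)\theta+|\D\varphi|)u=0$) then forces $s\equiv0$. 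Without this refinement the constants in the theorem are not attainable, so the gap is not cosmetic. Secondary, fixable issues: your $\varphi$ is only Lipschitz at $t=l$ and $t=\dist(\cdot,\pM)$ is only Lipschitz in general, so the estimate must be run for Lipschitz potentials as in \cite[Section~3]{CeccZeid24GT}; and the index identification $\ind(\calB^+)=k_n\deg(f)$ on a manifold with boundary requires the relative/doubling argument rather than a bare $\int_M\hatA\,\ch$.
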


Theorem~\ref{IT:long neck CeccZeid} is stronger than Theorem~\ref{IT:long neck Cecchini} and together they provide a satisfactory answer to the long neck problem for even-dimensional spin manifolds. These results are generalized recently by Liu in \cite{LiuDQ24pams,LiuDQ24mathz}. However, some issues are left for the odd-dimensional case. On one hand, there is a more restrictive assumption of $f$ being \emph{constant} instead of locally constant near the boundary in Theorem~\ref{IT:long neck Cecchini}. As Gromov commented in \cite[p.257]{Gromov23Four}, ``this is, probably, redundant''. On the other hand, Theorem~\ref{IT:long neck CeccZeid} is yet to be established in odd dimensions.

In this paper, we affirm the validity of Theorem~\ref{IT:long neck CeccZeid} for odd-dimensional spin manifolds in the theorem below.

\begin{theorem}\label{IT:long neck}
Let $(M,g)$ be an $n$-dimensional ($n\ge3$ odd) connected compact Riemannian spin manifold with boundary. Let $f:M\to\rmS^n$ be a smooth area-decreasing map that is locally constant near the boundary. Suppose $\scal_g\ge n(n-1)$ on $\supp(\D f)$, $\scal_g\ge\sigma^2n(n-1)$ on $M$ for some $\sigma>0$, and $H_g\ge-\sigma\tan(\frac{1}{2}\sigma nl)$ for some $l\in(0,\frac{\pi}{\sigma n})$. If 
\[
\dist_g(\supp(\D f),\pM)\ge l,
\]
then $\deg(f)=0$.
\end{theorem}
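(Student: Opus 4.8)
The strategy is to reduce the odd-dimensional statement to an even-dimensional one by passing to the cylinder $M\times\rmS^1$ (or equivalently to $M\times[0,1]$ with suitable boundary identifications), and then to run the Callias-operator argument of Cecchini--Zeidler in a family, measuring the obstruction to $\deg(f)=0$ through a spectral flow rather than a single index. Concretely, I would first form the area-decreasing map $f\times\id:M\times\rmS^1\to\rmS^n\times\rmS^1$ and compose with a degree-one area-nonincreasing collapse $\rmS^n\times\rmS^1\to\rmS^{n+1}$ away from a point, so that the hypotheses on $\scal_g$, on $H_g$, and on $\dist_g(\supp(\D f),\pM)$ transfer (up to the usual $\sigma$-rescaling bookkeeping) to the even-dimensional manifold $M\times\rmS^1$; the parameter $l$ and the bound $H_g\ge-\sigma\tan(\tfrac12\sigma n l)$ are exactly what one needs for the mean-curvature/potential inequality at the boundary in that higher-dimensional setting.

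The heart of the matter is analytic. On $M$ (odd-dimensional) the relevant Dirac-type operator is of Callias type, $\slaD + \psi\cdot\Phi$ for a potential $\Phi$ built from $f$ and a function encoding the distance to $\pM$, subject to a local (chirality-type) boundary condition whose admissibility is governed precisely by the inequality $H_g\ge-\sigma\tan(\tfrac12\sigma n l)$ via the Cecchini--Zeidler boundary estimate. Since $M$ is odd-dimensional there is no $\ZZ/2$-grading and hence no Fredholm index in the naive sense; instead I would consider a one-parameter family $\slaD + t\,\psi\cdot\Phi_t$, $t\in[0,1]$, interpolating between an invertible endpoint (where the potential dominates, using the positivity of $\scal_g$ on $\supp(\D f)$ and the Lichnerowicz--Schrödinger identity $\slaD^2 = \n^*\n + \tfrac14\scal_g + \mathcal{R}_{f}$, with the curvature term $\mathcal{R}_f$ controlled by the area-decreasing hypothesis just as in Llarull's theorem) and the operator at the other end. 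The spectral flow of this family along $[0,1]$, which is a well-defined integer for a path of self-adjoint elliptic boundary-value problems, computes a mod-$2$ or integral index that by an Atiyah--Patodi--Singer/Callias-type index theorem equals (a nonzero multiple of) $\deg(f)$. If $\deg(f)\ne0$ this spectral flow is nonzero, so the family must cross zero; but the Weitzenböck estimate, valid uniformly along the family precisely because $\scal_g\ge\sigma^2 n(n-1)$ globally and the boundary condition stays admissible for all $l'\le l$, forces every operator in the family to be invertible, a contradiction.

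The main obstacle I anticipate is making the boundary condition behave well \emph{uniformly along the entire family} and simultaneously matching the geometric constant: one must choose the potential $\Phi_t$ and the rescaling function so that, for each $t$, the square of the boundary-value problem is bounded below using only $H_g\ge-\sigma\tan(\tfrac12\sigma n l)$ and $\dist_g(\supp(\D f),\pM)\ge l$ — this is where the specific form $\tan(\tfrac12\sigma n l)$ enters, through an ODE comparison for the model operator on a collar $[0,l)\times\pM$, exactly as in the even-dimensional case but now with the extra $\sigma$-twist coming from the weaker global curvature bound. A secondary subtlety is the ``locally constant near $\pM$'' hypothesis: I expect the spectral-flow formulation to handle this directly (the potential is forced to be locally constant, but need not be globally constant, near $\pM$), thereby removing the superfluous ``constant near the boundary'' assumption of Theorem~\ref{IT:long neck Cecchini}, which is one of the advertised improvements. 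Assembling the index computation for the family then follows from a relative index / excision argument together with Llarull's rigidity input on $\supp(\D f)$, and the rest is the routine bookkeeping of transferring constants through the suspension construction.
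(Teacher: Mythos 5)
Your analytic framework — a self-adjoint Callias-type operator with the potential $h(t)=\tfrac12\sigma n\tan(\tfrac12\sigma nt)$ of the distance to $\pM$, the chiral local boundary condition whose admissibility is exactly the inequality $\tfrac12 nH_g+\psi\ge 0$, and a contradiction between a nonzero spectral flow and a uniform Weitzenb\"ock/Friedrich estimate — is the right one and matches the paper. The gap is at the decisive point: neither mechanism you propose for converting $\deg(f)\ne 0$ into a nonzero spectral flow works. The suspension to $M\times\rmS^1$ does not transfer the hypotheses: even granting a suitable collapse $\rmS^n\times\rmS^1\to\rmS^{n+1}$, one has $\scal_{g+\D t^2}=\scal_g\ge n(n-1)$, which is strictly below the threshold $(n+1)n$ required by the $(n+1)$-dimensional theorem, and no rescaling of the flat circle factor repairs this; the dimension-dependence of Llarull's constant is precisely why the odd case cannot be reduced to the even case by crossing with a circle. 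Your second mechanism, the family $\slaD+t\psi\Phi_t$ in the potential strength, also cannot detect the degree: the degree never enters that family (the twisted bundle is fixed and the relevant spectral-flow invariant is insensitive to the choice of admissible potential, cf.\ Lemma~\ref{L:spf homotop inv}), and the single-operator index that would carry $\deg(f)$ in even dimensions vanishes identically in odd dimensions — which is the whole difficulty. The appeal to an unspecified ``APS/Callias-type index theorem'' is exactly the step that needs an argument.

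What is missing is the way $\deg(f)$ is encoded in odd dimensions: as an odd K-theory class represented by a unitary-valued function. The paper pulls back a map $\bar\rho:\rmS^n\to U(k)$ (the Li--Su--Wang/Getzler odd Chern character construction) to get $\rho\in C^\infty(M,U(k)\oplus U(k))$, keeps the potential fixed, and takes the spectral flow of the \emph{conjugation} family $\calD_\psi(r)=(1-r)\calD_\psi+r\rho^{-1}\calD_\psi\rho$. Two further ingredients, both absent from your proposal, are then needed to identify this spectral flow with $-\deg(f)$: a Gromov--Lawson pair $(E,F)=(f^*E_0,\Psi^*E_0)$ built from $f$ and a zero-degree comparison map $\Psi$ agreeing with $f$ near $\pM$ (this is exactly where ``locally constant'', rather than ``constant'', near the boundary suffices, via Lemma~\ref{L:GL pair}), and a splitting formula for spectral flow (Theorem~\ref{T:splitting spf}) that transports the computation from the manifold with boundary to the closed double $\double M$, where \eqref{E:spf=deg} gives the value $-\deg(f)$ (Theorem~\ref{T:GL pair spf}). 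Once that identification is in place, your concluding vanishing argument via the uniform spectral estimate is correct and is the same as the paper's; without it, there is no route from $\deg(f)\ne0$ to a non-invertible operator in the family.
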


Note that $\tan(\frac{1}{2}\sigma nl)\to\infty$ as $l\to\frac{\pi}{\sigma n}$. As a consequence, for odd-dimensional spin manifolds, the aforementioned extra assumption can be dropped and there holds the improved long neck principle.

\begin{corollary}\label{IC:long neck}
Let $(M,g)$ be an $n$-dimensional ($n\ge3$ odd) connected compact Riemannian spin manifold with boundary. Let $f:M\to\rmS^n$ be a smooth area-decreasing map that is locally constant near the boundary. Suppose $\scal_g\ge n(n-1)$ on $\supp(\D f)$ and $\scal_g\ge\sigma^2n(n-1)$ on $M$ for some $\sigma>0$. If
\[
\dist_g(\supp(\D f),\pM)\ge\frac{\pi}{\sigma n},
\]
then $\deg(f)=0$.
\end{corollary}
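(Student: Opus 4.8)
The plan is to deduce Corollary~\ref{IC:long neck} from Theorem~\ref{IT:long neck} by a limiting argument, exploiting the fact that the mean-curvature hypothesis in Theorem~\ref{IT:long neck} becomes vacuous as $l\to\frac{\pi}{\sigma n}$. Suppose we are given $(M,g)$, $f$, and $\sigma>0$ as in the statement of the corollary, with
\[
\dist_g(\supp(\D f),\pM)\ge\frac{\pi}{\sigma n}.
\]
The key observation is that $\tan(\frac{1}{2}\sigma n l)\to+\infty$ as $l\uparrow\frac{\pi}{\sigma n}$, so for $l$ sufficiently close to $\frac{\pi}{\sigma n}$ from below we have $-\sigma\tan(\frac{1}{2}\sigma nl)$ as small (as negative) as we like; in particular, since $M$ is compact, its boundary mean curvature $H_g$ is bounded below by some constant $c\in\RR$, and we can choose $l<\frac{\pi}{\sigma n}$ close enough that $-\sigma\tan(\frac{1}{2}\sigma nl)\le c\le H_g$. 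For such an $l$ we also still have $\dist_g(\supp(\D f),\pM)\ge\frac{\pi}{\sigma n}>l$.

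With this choice of $l$, every hypothesis of Theorem~\ref{IT:long neck} is satisfied: $n\ge3$ is odd, $(M,g)$ is a connected compact Riemannian spin manifold with boundary, $f$ is smooth, area-decreasing, and locally constant near $\pM$, the scalar curvature bounds $\scal_g\ge n(n-1)$ on $\supp(\D f)$ and $\scal_g\ge\sigma^2n(n-1)$ on $M$ hold by assumption, the mean-curvature bound $H_g\ge-\sigma\tan(\frac{1}{2}\sigma nl)$ holds by the choice of $l$, and the distance inequality $\dist_g(\supp(\D f),\pM)\ge l$ holds. Theorem~\ref{IT:long neck} then yields $\deg(f)=0$, which is exactly the conclusion of the corollary.

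There is essentially no obstacle here: the corollary is a straightforward specialization of Theorem~\ref{IT:long neck}, and the only point requiring a (trivial) verification is that the compactness of $M$ guarantees a uniform lower bound on $H_g$, which can then be absorbed into the $l$-dependent bound by taking $l$ near the endpoint of the admissible interval. One may optionally remark that the same reasoning recovers Theorem~\ref{IT:long neck Cecchini} in odd dimensions but without the extra hypothesis that $f$ be constant (rather than merely locally constant) near $\pM$, since the strict inequality $\dist_g(\supp(\D f),\pM)>\frac{\pi}{\sigma n}$ there is even stronger than what is needed to pick an admissible $l$.
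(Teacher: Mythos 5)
Your proposal is correct and matches the paper's own (one-line) derivation: the corollary is obtained from Theorem~\ref{IT:long neck} precisely by noting that $\tan(\frac{1}{2}\sigma nl)\to\infty$ as $l\to\frac{\pi}{\sigma n}$, so that for $l$ close enough to the endpoint the mean-curvature hypothesis is automatically satisfied on the compact boundary. Your explicit verification that compactness gives a uniform lower bound on $H_g$ is the only detail the paper leaves implicit.
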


\begin{remark}\label{IR:long neck sharp}
Cecchini and Zeidler have shown in \cite[Proposition~5.2]{CeccZeid24GT} that Theorems~\ref{IT:long neck CeccZeid} and \ref{IT:long neck} (for $\sigma=1$) are sharp (regardless of dimension parity), which means that the constant $\frac{\pi}{n}$ in Theorem~\ref{IT:long neck Cecchini} and Corollary~\ref{IC:long neck} (for $\sigma=1$) is optimal. Their example is constructed from the toric band $\rmT^{n-1}\times[-l,l]$ considered by Gromov in \cite{Gromov18metric} with warped product metric $g=\varphi^2g_{\rmT^{n-1}}+\D t^2$ for $\varphi(t)=\cos^{2/n}(\frac{1}{2}nt)$. This also shows the optimality of $\frac{\pi}{\sigma n}$ for $\sigma>1$. It is not clear to the author whether the case of $0<\sigma<1$ is optimal as well.
\end{remark}

Our proof of Theorem~\ref{IT:long neck} uses Callias operators with Lipschitz potential from a Gromov--Lawson pair, whose index was studied by Cecchini--Zeidler in \cite{CeccZeid24GT}. But for odd-dimensional manifolds, the index is not a suitable object to work with. This is because the argument here involves the (complex) spin Dirac operator on a closed manifold twisted by a Hermitian bundle, whose index vanishes identically in odd dimensions. Inspired by \cite{LiSuWang24}, where Li, Su and Wang presented a new proof of Llarull's rigidity theorem via spectral flow, we turn to consider the spectral flow approach. With the help of a splitting formula (Theorem~\ref{T:splitting spf}), we are able to deal with the spectral flow of a family of Callias operators on manifolds with boundary. Then we can conduct a standard contradiction argument.

Intuitively, the degenerate case $\sigma=0$ in Corollary~\ref{IC:long neck} would suggest that infinitely long neck forces the degree of $f$ to be zero. In fact, this rough observation corresponds to the following Llarull's theorem for non-compact manifolds of Zhang \cite{Zhang20}, and Li, Su, Wang and Zhang \cite{LiSuWangZhang24}.

\begin{theorem}[\cite{Zhang20,LiSuWangZhang24}]\label{IT:Llarull non-cpt}
Let $(M,g)$ be an $n$-dimensional connected non-compact complete Riemannian spin manifold without boundary. Let $f:M\to\rmS^n$ be a smooth area-decreasing map that is locally constant near infinity and of non-zero degree. Suppose that $\scal_g\ge n(n-1)$ on $\supp(\D f)$. Then $\inf(\scal_g)<0$.
\end{theorem}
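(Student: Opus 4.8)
The strategy is to argue by contradiction, running a Callias-operator/spectral-flow argument parallel to --- but simpler than --- the one behind Theorem~\ref{IT:long neck}, now on a complete non-compact manifold without boundary and in the degenerate ``$\sigma=0$'' (infinitely long neck) regime. Assume, contrary to the conclusion, that $\inf(\scal_g)\ge 0$; then $\scal_g\ge 0$ on all of $M$, while $\scal_g\ge n(n-1)$ on the set $K:=\supp(\D f)$, which is compact because $f$ is locally constant near infinity. The pulled-back spinor bundle $E:=f^*\slaS_{\rmS^n}$, with its pulled-back connection, is flat on $M\setminus K$ and, once identified with its locally constant value on each end, trivial there. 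The goal is to deduce $\deg(f)=0$.

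I would form the twisted complex spin Dirac operator $\slaD_E$ on $\slaS_M\otimes E$ over $M$ --- for even $n$ with the natural $\ZZ_2$-grading, and for odd $n$, following \cite{LiSuWang24}, passing to a suitable path $(\slaD_{E,s})_{s\in[0,1]}$ of Dirac-type operators whose spectral flow replaces the identically vanishing index. Since $\slaD_E$ is not Fredholm on the non-compact $M$, I would perturb it by a Lipschitz Callias potential $\Phi$ coming from the Gromov--Lawson pair of $f$, as in \cite{CeccZeid24GT}: using that $E$ is trivial and flat outside $K$, take $\Phi$ self-adjoint, anticommuting with the Clifford action, and equal to the standard involution of the pair times a positive scalar profile $\psi$ tending to infinity at infinity, so that $B:=\slaD_E+\Phi$ (resp.\ each $B_s$) has a spectral gap at infinity and is Fredholm. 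Then the Callias index theorem --- or, in the form suited to this paper, the splitting formula of Theorem~\ref{T:splitting spf} applied to the decomposition of $M$ into a large ball containing $K$ and its complementary end, combined with the computation on $\rmS^n$ underlying the closed Llarull theorem --- shows that $\ind(B)$ (resp.\ the spectral flow $\spf((B_s)_{s\in[0,1]})$) is a fixed nonzero integer multiple of $\deg(f)$, hence nonzero once $\deg(f)\ne 0$.

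For the contradiction, I would use the Lichnerowicz formula $B^2=\nabla^*\nabla+\tfrac14\scal_g+\calR^E+|\Phi|^2+\rmc(\nabla\Phi)$, with $\calR^E$ the Clifford contraction of the curvature of $E$. Llarull's pointwise curvature estimate --- valid since $f$ is area-decreasing and $\rmS^n$ has constant curvature $1$ --- gives $\tfrac14\scal_g+\calR^E\ge\tfrac14(\scal_g-n(n-1))\ge 0$ on $K$ and $=\tfrac14\scal_g\ge 0$ off $K$ under the contradiction hypothesis. Exploiting that the end is infinitely long, I would choose $\psi$ to grow as slowly as needed, so that $|\rmc(\nabla\Phi)|$ is dominated by $|\Phi|^2$ wherever $\psi$ is already large, while $\psi$ is taken small near $K$; then the zeroth-order part of $B^2$ is non-negative everywhere and strictly positive far out on the end. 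Consequently $B$ (resp.\ every $B_s$) has trivial $L^2$-kernel: an $L^2$-section $\phi$ with $B\phi=0$ satisfies $\|\nabla\phi\|^2+\int_M\langle(\tfrac14\scal_g+\calR^E+|\Phi|^2+\rmc(\nabla\Phi))\phi,\phi\rangle=0$ with non-negative integrand, hence is $\nabla$-parallel and vanishes where the zeroth-order part is strictly positive, so vanishes identically by connectedness of $M$. Thus $\ind(B)=0$ (resp.\ $\spf=0$), contradicting the previous paragraph, so $\deg(f)=0$. That $f$ is only \emph{locally} constant near infinity --- rather than constant --- is accommodated by the same device as in the proof of Theorem~\ref{IT:long neck}.

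The main obstacle is the step just described: the profile $\psi$ must be chosen so that $B$ is simultaneously invertible at infinity --- which forces $\psi$ large there --- and so that $\rmc(\nabla\Phi)$ does not make the zeroth-order part of $B^2$ negative enough to create $L^2$-harmonic sections, all from the single assumption $\scal_g\ge 0$, with no positive lower bound on the scalar curvature at infinity. This is exactly where non-compactness is indispensable: the infinite length of the end is the only source of the slack that lets $\nabla\Phi$ be made negligible wherever $\Phi$ is not yet large. Quantifying this trade-off while preserving the Gromov--Lawson pair structure needed for the index/spectral-flow computation of the second paragraph is the technical heart of the proof.
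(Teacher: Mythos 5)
There is a genuine gap at the step you yourself identify as the technical heart: the claim that, under only $\scal_g\ge 0$ off $\supp(\D f)$, one can choose the profile $\psi$ so that the zeroth-order part of $B^2$ is non-negative everywhere. On $M\setminus\supp(\D f)$ the twisting bundle is flat, so the zeroth-order term there is $\tfrac{n}{4(n-1)}\scal_g+\psi^2-|\D\psi|$, and with only $\scal_g\ge0$ you would need $\psi^2\ge|\D\psi|$ pointwise. But $\psi$ must vanish on $\supp(\D f)$ (where the involution $\theta$ is undefined) and be a non-zero constant at infinity (for coercivity), and the differential inequality $|\D\psi|\le\psi^2$ along a unit-speed path starting where $\psi=0$ forces $\psi\equiv0$ along that path (integrate $(-1/\psi)'\le1$ backwards from any point where $\psi>0$). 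So no admissible potential with $\psi^2-|\D\psi|\ge0$ everywhere exists, no matter how long the end is or how slowly $\psi$ grows: the negative dip of $\psi^2-|\D\psi|$ in the transition region is unavoidable. The ``infinite length of the end'' is not the source of the needed slack.

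The missing idea is that the slack comes from \emph{continuity of the scalar curvature near} $\supp(\D f)$: since $\scal_g\ge n(n-1)$ on $\supp(\D f)$, one has $\scal_g\ge\sigma^2n(n-1)$ for some $\sigma\in(0,1)$ (e.g.\ $\sigma=1/2$) on a compact collar $K\supset\supp(\D f)$ of width $\delta>0$. One then concentrates all of the growth of $\psi$ inside that collar --- taking $\psi=h(\dist_g(\supp(\D f),\cdot))$ with $h(t)=\tfrac12\sigma n\tan(\tfrac12\sigma nt)$, constant outside $K$ --- so that the dip $\psi^2-|\D\psi|\ge-\tfrac14\sigma^2n^2$ is absorbed by the strictly positive scalar curvature on the collar, while on $M\setminus K$ one has $\D\psi=0$ and $\psi^2=\tfrac14\sigma^2n^2\tan^2(\tfrac12\sigma n\delta)>0$, which even tolerates $\scal_g$ slightly negative there. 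This is exactly how the paper proceeds: it first proves the quantitative Theorem~\ref{IT:quanti Llarull} with this potential, and then deduces Theorem~\ref{IT:Llarull non-cpt} in three lines by the continuity argument just described (see also Remark~\ref{R:Llarull non-cpt} for a smooth-potential variant). Your index/spectral-flow computation in the second paragraph (splitting along $\p L$, the double construction, and $\spf=-\deg(f)\ne0$) is in line with the paper's Theorem~\ref{T:GL pair spf} and Lemma~\ref{L:GL pair}; the vanishing half of the contradiction is where your argument, as written, cannot be completed.
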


The non-strict inequality $\inf(\scal_g)\le0$ can be proved in a relatively easier way by using Gromov--Lawson's relative index theorem. For the strict inequality, Zhang \cite{Zhang20} considered a deformed Dirac operator and then applied the relative index theorem to prove the theorem in even-dimensional case and a weaker form in odd-dimensional case. In \cite{LiSuWangZhang24}, the authors combined ideas in \cite{Zhang20} and \cite{LiSuWang24} and converted the problem to a closed manifold. In this way they obtained a proof for odd-dimensional manifolds. 
Using the method to prove Theorem~\ref{IT:long neck}, we can get a refined quantitative version of Theorem~\ref{IT:Llarull non-cpt}.

\begin{theorem}\label{IT:quanti Llarull}
Let $(M,g)$ be an $n$-dimensional connected non-compact complete Riemannian spin manifold without boundary. Let $f:M\to\rmS^n$ be a smooth area-decreasing map which is locally constant near infinity and of non-zero degree. Let $K\Subset M$ be a compact subset containing $\supp(\D f)$ with smooth boundary such that $0<\delta:=\dist_g(\supp(\D f),\p K)<\frac{\pi}{\sigma n}$ for some $\sigma>0$.
Suppose $\scal_g\ge n(n-1)$ on $\supp(\D f)$ and $\scal_g\ge\sigma^2n(n-1)$ on $K\setminus\supp(\D f)$. Then,
\[
\inf(\scal_g)<-\sigma^2n(n-1)\tan^2\Big(\frac{1}{2}\sigma n\delta\Big).
\]
\end{theorem}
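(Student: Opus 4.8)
The plan is to argue by contradiction and to transplant the spectral-flow argument behind Theorem~\ref{IT:long neck} to the noncompact boundaryless setting, with the mean-curvature hypothesis there played instead by the scalar-curvature control outside the compact set $K$. Suppose, contrary to the claim,
\[
\inf_M\scal_g\ \ge\ -\sigma^2n(n-1)\tan^2\!\Big(\tfrac12\sigma n\delta\Big)\ =:\ -c,
\]
and set $\rho:=\dist_g(\,\cdot\,,\supp(\D f))$, a $1$-Lipschitz function on $M$. Since $\dist_g(\p K,\supp(\D f))=\delta$, every point with $\rho<\delta$ lies in $\mathrm{int}\,K$, so $\{0<\rho<\delta\}\subseteq K\setminus\supp(\D f)$ and hence $\scal_g\ge\sigma^2n(n-1)$ on this buffer, while $\scal_g\ge n(n-1)$ on $\supp(\D f)$ and $\scal_g\ge-c$ on all of $M$. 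Because $f$ is locally constant on $M\setminus\supp(\D f)$, the pullback of the spinor bundle of $\rmS^n$ (stabilized, if necessary, by a trivial summand so ranks agree and the Llarull curvature estimate applies) and a trivial bundle form a Gromov--Lawson pair; following \cite{CeccZeid24GT}, and incorporating the $\rmS^n$-type zeroth-order deformation used in Llarull's theorem and in \cite{LiSuWang24}, I would attach to it a Callias operator $D_\Psi$ on the complete manifold $M$ with Lipschitz potential $\Psi=u(\rho)\,\Phi$, where $\Phi$ is the Gromov--Lawson bundle involution (odd, self-adjoint, parallel away from $\supp(\D f)$) and $u:[0,\infty)\to[0,\infty)$ is Lipschitz, equal to $0$ on a thin collar $\{\rho\le\rho_0\}$ and to a constant $u_\infty$ on $\{\rho\ge\delta\}$. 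As $\Psi$ is uniformly invertible off the compact set $\supp(\D f)$, $D_\Psi$ is Fredholm.

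Next I would capture $\deg(f)$ via spectral flow. As in the proof of Theorem~\ref{IT:long neck}, embed $D_\Psi$ into a one-parameter family $\{D_{\Psi,t}\}_{t\in[0,1]}$ of Callias operators obtained by deforming the twisting data on the $\rmS^n$-factor to the trivial one, keeping $\Psi$ fixed. Applying the splitting formula (Theorem~\ref{T:splitting spf}) along a smooth hypersurface $\Sigma\subset\{0<\rho<\delta\}$ enclosing $\supp(\D f)$ writes $\spf(\{D_{\Psi,t}\})$ as a contribution from the compact region inside $\Sigma$ plus one from its complement; the latter vanishes because the potential is invertible there, and the former equals, up to sign, $\deg(f)$ times a fixed nonzero integer depending only on $n$ — the spectral-flow counterpart of Llarull's index computation. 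Hence $\spf(\{D_{\Psi,t}\})$ is a nonzero multiple of $\deg(f)$.

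It then suffices to show every $D_{\Psi,t}$ is invertible, for then $\spf(\{D_{\Psi,t}\})=0$, forcing $\deg(f)=0$ and contradicting the hypothesis. The Lichnerowicz--Callias formula has the shape
\[
D_{\Psi,t}^{\,2}=\nabla^*\nabla+\frac{\scal_g}{4}+\calR_t+\Psi^2+c(\D\Psi),
\]
where the curvature endomorphism $\calR_t$, which collects the twisting curvature and the fixed $\rmS^n$-deformation, satisfies, uniformly in $t$: on $\supp(\D f)$, $\tfrac{\scal_g}{4}+\calR_t\ge0$ by the Llarull estimate (using $\scal_g\ge n(n-1)$ and that $f$ is area-decreasing); off $\supp(\D f)$ the twisting part of $\calR_t$ vanishes and the deformation contributes a nonnegative term of order $\sigma^2 n$. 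I would then choose $u$ so that the zeroth-order part of $D_{\Psi,t}^{\,2}$ is $\ge0$ on $M$ and $>0$ on a nonempty open set: on the collar $\{0<\rho\le\rho_0\}$, which lies in the buffer, $\scal_g\ge\sigma^2n(n-1)>0$ while $u\equiv0$ and $\calR_t\ge0$, so the zeroth-order part is $\ge\tfrac{\sigma^2n(n-1)}{4}>0$; on $\{\rho_0<\rho<\delta\}$ one imposes on $u$ the Riccati-type differential inequality underlying Theorem~\ref{IT:long neck} and \cite[Theorem~1.4]{CeccZeid24GT}, whose critical solution, after the normalization forced by the deformation, is governed by $\tan(\tfrac12\sigma n\rho)$, so that — since $\delta<\tfrac{\pi}{\sigma n}$ and the critical ramp comfortably clears the threshold (heuristically because $n>n-1$) — $u$ can reach a value $u_\infty$ with $u_\infty^2>\tfrac c4=\tfrac{\sigma^2n(n-1)}{4}\tan^2(\tfrac12\sigma n\delta)$; on $\{\rho\ge\delta\}$, $u\equiv u_\infty$ makes $c(\D\Psi)=0$ and the zeroth-order part is $\ge\tfrac{\scal_g}{4}+u_\infty^2\ge-\tfrac c4+u_\infty^2>0$. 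Thus, if $D_{\Psi,t}\phi=0$, then $0=\langle D_{\Psi,t}^{\,2}\phi,\phi\rangle\ge\|\nabla\phi\|^2+\int_M(\text{nonnegative})|\phi|^2$, so $\nabla\phi\equiv0$ and $\phi$ vanishes on the collar, whence $\phi\equiv0$ on the connected manifold $M$. Therefore $\ker D_{\Psi,t}=0$ for every $t$, and the contradiction follows.

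The step I expect to be the main obstacle is the buffer estimate: pinning down the exact Riccati inequality and the normalization of $u$ that let the potential reach the value recorded by $\tan(\tfrac12\sigma n\delta)$ within the width $\delta$ — equivalently, the precise way the $\rmS^n$-deformation feeds into the Bochner curvature so that $\sigma n$, rather than $\sigma\sqrt{n(n-1)}$, appears in the argument of the tangent. This is the Callias/spectral-flow analogue of the sharp computation in \cite[Theorem~1.4]{CeccZeid24GT} and of Theorem~\ref{IT:long neck}, and is where the area-decreasing structure and the choice of auxiliary bundle must be handled with care. Installing $D_\Psi$ and its family on the noncompact $M$ — Fredholmness, the applicability of the splitting formula, and the identification of the localized spectral flow with $\deg(f)$ — also needs care but parallels the compact case; the unique-continuation step is routine.
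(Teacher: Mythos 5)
Your overall strategy coincides with the paper's: argue by contradiction, build a Gromov--Lawson pair supported on $\supp(\D f)$, attach a Callias operator whose Lipschitz potential is the critical Riccati profile $h(t)=\frac12\sigma n\tan(\frac12\sigma nt)$ composed with the truncated distance to $\supp(\D f)$, localize the spectral flow of the one-parameter family to a compact piece via the splitting formula (where it equals $-\deg(f)\ne0$), and kill the kernel by a Weitzenb\"ock estimate. The genuine gap lies exactly where you flag ``the main obstacle,'' and it is not merely a matter of normalization: your accounting of constants is inconsistent. You use the unimproved Lichnerowicz term $\frac{\scal_g}{4}$ at infinity, so you only demand $u_\infty^2>\frac{c}{4}=\frac{\sigma^2n(n-1)}{4}\tan^2(\frac12\sigma n\delta)$; but the ramp that reaches $\frac{\sigma^2n^2}{4}\tan^2(\frac12\sigma n\delta)$ over width $\delta$ is available only if on the buffer one uses the Friedrich/Penrose-improved estimate with coefficient $\frac{n}{n-1}\cdot\frac{\scal_g}{4}$ (Proposition~\ref{P:sp est Callias} and Remark~\ref{R:sp est Callias}), which converts $\frac{\sigma^2n(n-1)}{4}$ into $\frac{\sigma^2n^2}{4}$ and licenses the Riccati inequality $u^2-u'\ge-\frac{\sigma^2n^2}{4}$. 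Applying the improved estimate consistently, the requirement at infinity becomes $\frac{n}{n-1}\cdot\frac{\scal_g}{4}+u_\infty^2\ge0$, i.e.\ $u_\infty^2\ge\frac{\sigma^2n^2}{4}\tan^2(\frac12\sigma n\delta)$ --- \emph{exactly} the terminal value of the critical ramp. There is no slack anywhere: the zeroth-order endomorphism is only $\ge0$, with possible equality both at infinity and on the buffer, so invertibility of $\calD_\psi(r)$ cannot be read off from strict positivity. One must instead run the equality-case analysis: a kernel element has vanishing Penrose operator and vanishes on the open subset of $\supp(\D f)$ where the area-contraction constant is $<1$ (where the Llarull curvature inequality is strict), whence it vanishes identically. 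Your closing remark that ``the unique-continuation step is routine'' conceals the fact that without it the proof does not close at all.

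A second, related defect: the collar $\{\rho\le\rho_0\}$ with $u\equiv0$, which you introduce to manufacture an open set on which a harmonic section vanishes, actively breaks the borderline matching. It shortens the ramp to width $\delta-\rho_0$, so $u_\infty=\frac12\sigma n\tan(\frac12\sigma n(\delta-\rho_0))$ falls strictly short of the value needed at infinity under the contradiction hypothesis. In the paper the potential ramps from $\supp(\D f)$ itself (with $h(0)=0$), and the open set of forced vanishing is supplied by the strict Llarull inequality inside $\supp(\D f)$, not by a collar. Two minor points: in odd dimensions the twisting datum is not the pulled-back spinor bundle of $\rmS^n$ but the trivial bundle $E_0$ over $\rmS^n$ carrying the family of connections $\D+r\bar{\rho}^{-1}[\D,\bar{\rho}]$ of \cite{LiSuWang24}, the deformation parameter being the conjugation $\calD_\psi(r)=(1-r)\calD_\psi+r\rho^{-1}\calD_\psi\rho$; and the even-dimensional case (handled in the paper by the index of a single Callias operator) is not addressed in your sketch.
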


From this theorem, one concludes that the faster the scalar curvature near $\supp(\D f)$ decays, the larger the infimum of the scalar curvature on $M$ could be. But it is always negative. In particular, we get another proof of Theorem~\ref{IT:Llarull non-cpt} in odd dimensions based on spectral flow of Callias operators (see Section~\ref{S:quanti Llarull}).

The techniques developed in the current paper have been applied to deal with other problems related to scalar curvature, including the band width estimate in a later work \cite{Shi25oddK}.

\subsection*{Organization of the paper}\label{SS:orga}

This paper is organized as follows. In Section~\ref{S:splitting spf}, we introduce a splitting formula for the spectral flow of a family of self-adjoint Fredholm Dirac-type operators on complete Riemannian manifolds. In Section~\ref{S:Callias}, we discuss the main object of the paper, Callias operators with Lipschitz potential associated to a Gromov--Lawson pair, and obtain a formula computing its spectral flow on manifolds with boundary. In Section~\ref{S:long neck}, we prove the scalar-mean curvature comparison theorem for the long neck problem in odd dimensions (Theorem~\ref{IT:long neck}). Section~\ref{S:quanti Llarull} is devoted to the proof of the quantitative Llarull's theorem on non-compact manifolds (Theorem~\ref{IT:quanti Llarull}) and a new proof of Llarull's theorem on non-compact manifolds (Theorem~\ref{IT:Llarull non-cpt}).

\section{A splitting formula for the spectral flow}\label{S:splitting spf}

In this section, we present a splitting formula for the spectral flow of a continuous path of self-adjoint Dirac-type operators, which relates the spectral flow on the original manifold to that on the partitioned manifold. Similar results in compact situation have appeared in different forms in the literature, for example \cite{Bunke95eta,Nicolaescu95,CappellLeeMiller96-2,DanielKirk99,FurutaniOtsuki02,Furutani06aps-spf}, and most notably \cite{KirkLesch04}, often assuming product structure near the boundary of the partitioned manifold. Here we formulate our formula for possibly non-compact manifolds without this assumption and in a form that is suitable for our application in later sections.

\subsection{Self-adjoint Fredholm boundary conditions for Dirac-type operators}\label{SS:s.a-Fred Dirac}

Let $M$ be a complete Riemannian $n$-manifold possibly with boundary. A Hermitian vector bundle $S\to M$ is called a \emph{Dirac bundle} if there is a Clifford multiplication $\rmc(\cdot):T^*M\to\End(S)$ that is skew-adjoint and satisfies $\rmc(\cdot)^2=-|\cdot|^2$, and a Hermitian connection $\nabla$ that is compatible with $\rmc(\cdot)$ (i.e. $\rmc(\cdot)$ is a parallel bundle endomorphism, cf. \cite[\S II.5]{LawMic89}). The \emph{Dirac operator} is a formally self-adjoint first-order differential operator acting on sections of a Dirac bundle, defined by
\[
D:=\sum_{i=1}^{n}\rmc(e_i^*)\nabla_{e_i}:\ C^\infty(M,S)\to C^\infty(M,S),
\]
where $e_1,\dots,e_n$ is an orthonormal local tangent frame and $e_1^*,\dots,e_n^*$ is the associated dual cotangent frame. We use the convention that the principal symbol of a Dirac operator is the Clifford multiplication. An operator that has the same principal symbol as a Dirac operator is called a \emph{Dirac-type operator}.

Recall the following \emph{Bochner--Schr\"odinger--Lichnerowicz--Weitzenb\"ock formula} (cf. \cite[\S II.8]{LawMic89})
\begin{equation}\label{E:BSLW for}
D^2=\nabla^*\nabla+\calR,
\end{equation}
where $\nabla^*\nabla$ is the connection Laplacian on $S$ and
\begin{equation}\label{E:BSLW endo}
\calR=\sum_{i<j}\rmc(e_i^*)\rmc(e_j^*)R^S(e_i,e_j)
\end{equation}
is a curvature endomorphism of the curvature tensor $R^S=(\nabla)^2$ of $S$.

Let $\calP$ be the \emph{Penrose operator} defined by
\[
\calP_eu:=\nabla_eu+\frac{1}{n}\rmc(e^*)Du,
\]
for any $e\in TM,u\in C^\infty(M,S)$. Then we have (cf. \cite[\S 5.2]{BHMMM15book})
\begin{equation}\label{E:Penrose}
|\nabla u|^2=|\calP u|^2+\frac{1}{n}|Du|^2.
\end{equation}

\begin{definition}\label{D:coercive}
A Dirac-type operator $\calD$ is said to be \emph{coercive at infinity} if there exist a compact subset $K\Subset M$ and a constant $C>0$ such that
\[
\|\calD u\|_{L^2(M,S)}\ge C\|u\|_{L^2(M,S)}
\]
for any smooth section $u$ with compact support in $M\setminus K$.
\end{definition}

Consider a Dirac-type operator $\calD$ on a complete Riemannian manifold $M$ with compact boundary. 
Then there exists a formally self-adjoint differential operator $\calA:C^\infty(\p M,S_{|\p M})\to C^\infty(\p M,S_{|\p M})$ of first-order with principal symbol
\[
\sigma_\calA(\xi)=\rmc(\nu^*)^{-1}\rmc(\xi),
\]
where $\xi\in T^*\p M$, and $\nu^*$ is the dual covector of the inward pointing unit normal vector field $\nu=e_n$ along $\p M$. It is also a Dirac-type operator. In addition, one can further require that $\calA$ anti-commutes with $\rmc(\nu^*)$ (cf. \cite[Section~3]{BaerBallmann16}). Such an operator $\calA$ is called a \emph{compatible adapted operator} to $\calD$.

\begin{remark}\label{R:adatped op}
A compatible adapted operator can be given by the canonical boundary Dirac operator. That is, we make $S_{|\pM}$ a Dirac bundle by setting
\[
\begin{aligned}
\rmc^\p(\xi)&=\rmc(\nu^*)^{-1}\rmc(\xi),\quad\text{for }\xi\in T^*\p M, \\
\nabla^\p&=\nabla+\frac{1}{2}\rmc^\p(\nabla\nu^*).
\end{aligned}
\]
It can be checked that the Dirac operator
\[
A:=\sum_{i=1}^{n-1}\rmc^\p(e_i^*)\nabla^\p_{e_i}:\ C^\infty(\pM,S_{|\pM})\to C^\infty(\pM,S_{|\pM})
\]
anti-commutes with $\rmc(\nu^*)$. Also, we have
\[
A=\rmc(\nu^*)^{-1}D-\nabla_\nu+\frac{n-1}{2}H,
\]
where $H$ is the mean curvature of $\pM$ with respect to $\nu$. See \cite[Appendix~1]{BaerBallmann16}.
\end{remark}

Let $P:L^2(\pM,S_{|\p M})\to L^2(\pM,S_{|\p M})$ be an orthogonal projection. If $P$ is a pseudo-differential operator of order 0, then $P$ defines a boundary condition for $\calD$. We denote by $\calD_P$ the operator $\calD$ with this boundary condition, whose domain is
\[
\dom\calD_P:=\{u\in H^1_\calD(M,S)\;|\;P(u_{|\p M})=0\},
\]
where
\[
H^1_\calD(M,S):=\{u\in H^1_\loc(M,S)\cap L^2(M,S)\;|\;\calD u\in L^2(M,S)\},
\]
and $u_{|\pM}$ denotes the extension of the boundary restriction map defined on smooth sections to the Sobolev space $H^1_\calD$.
We call the boundary condition defined by $P$ a \emph{pseudo-local} boundary condition. If $S_1\subset S_{|\p M}$ is a subbundle, and $P$ is the projection induced by the fiberwise orthogonal projection onto $S_1$, then $P$ defines a \emph{local} boundary condition for $\calD$.

Fix a compatible adapted operator $\calA$. Let $\Pi_{\ge0}:L^2(\pM,S_{|\p M})\to L^2(\pM,S_{|\p M})$ be the non-negative spectral projection for the operator $\calA$. (Notice that $\calA$ is a formally self-adjoint Dirac-type operator on a closed manifold $\p M$, thus is self-adjoint. So $\calA$ has discrete spectrum in $\RR$, and the unit eigensections of $\calA$ form an orthonormal basis of $L^2(\p M,S_{|\pM})$.)

\begin{definition}[{\cite[Definition~2.1]{KirkLesch04}}]\label{D:s.a-Fred}
The \emph{self-adjoint Fredholm Grassmannian} ${\rm Gr}(\calA)$ is defined to be the set of orthogonal projections $P:L^2(\pM,S_{|\p M})\to L^2(\pM,S_{|\p M})$ such that
\begin{enumerate}
\item $P$ is a pseudo-differential operator of order 0; \label{item:s.a-Fred 1}
\item $\rmc(\nu^*)P\rmc(\nu^*)^{-1}=\id-P$; and \label{item:s.a-Fred 2}
\item $\Pi_{\ge0}|_{\IM P}:\IM P\to\IM\Pi_{\ge0}$ is a Fredholm operator. \label{item:s.a-Fred 3}
\end{enumerate}
\end{definition}

The following proposition is a consequence of the theory of boundary value problems for Dirac-type operators due to B\"ar--Ballmann \cite{BaerBallmann12,BaerBallmann16}.

\begin{proposition}\label{P:s.a-Fredholm}
Let $\calD$ be a formally self-adjoint Dirac-type operator that is coercive at infinity. Then for any $P\in{\rm Gr(\calA)}$, $\calD_P$ is a self-adjoint Fredholm operator.
\end{proposition}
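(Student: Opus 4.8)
The plan is to deduce self-adjointness and the Fredholm property from the B\"ar--Ballmann boundary value theory, reducing everything to a statement about the boundary operator $\calA$ and the projection $P$. First I would recall the general B\"ar--Ballmann framework: for a formally self-adjoint Dirac-type operator $\calD$ on a manifold with compact boundary, the maximal domain $H^1_\calD(M,S)$ admits a well-defined trace map to the hybrid Sobolev space $\check H(\calA) = H^{1/2}_{\ge 0} \oplus H^{-1/2}_{<0}$ built from the spectral decomposition of a compatible adapted operator $\calA$, and the space of extensions of $\calD$ between $\calD_{\min}$ and $\calD_{\max}$ corresponds to closed subspaces of this boundary space. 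The adjoint of $\calD_P$ is governed by condition~\eqref{item:s.a-Fred 2}: the Green's formula for $\calD$ has the symplectic form on the boundary given by $\langle \rmc(\nu^*)\cdot, \cdot\rangle$, and condition~\eqref{item:s.a-Fred 2}, $\rmc(\nu^*)P\rmc(\nu^*)^{-1} = \id - P$, says exactly that $\IM P$ is a Lagrangian subspace for this form, hence that $\calD_P$ is formally self-adjoint; together with the regularity theory (condition~\eqref{item:s.a-Fred 1} ensures elliptic regularity up to the boundary so that $\dom \calD_P^* = \dom\calD_P$ and not merely a formal statement) this gives genuine self-adjointness.

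Next I would address the Fredholm property. The key input is the coercivity at infinity hypothesis in Definition~\ref{D:coercive}: this guarantees that $\calD_{\max}$ restricted to sections supported outside a large compact set $K$ has trivial kernel and closed range, so that the only source of non-compactness or infinite-dimensional kernel/cokernel is the behavior near the compact boundary $\pM$ (and on the compact region $K$, where ellipticity gives compact resolvent-type estimates). In the B\"ar--Ballmann picture, $\calD_P$ is Fredholm if and only if the boundary condition is a \emph{Fredholm boundary condition}, meaning the projection onto $\IM P$ differs from the Atiyah--Patodi--Singer projection $\Pi_{<0}$ (equivalently from $\Pi_{\ge 0}$ up to the complementary convention) by an operator that is ``small'' in the appropriate sense — precisely, that $\Pi_{\ge 0}|_{\IM P}: \IM P \to \IM \Pi_{\ge 0}$ is Fredholm, which is condition~\eqref{item:s.a-Fred 3}. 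So I would invoke the B\"ar--Ballmann elliptic estimate (a coercive estimate of the form $\|u\|_{H^1} \le C(\|\calD u\|_{L^2} + \|u\|_{L^2})$ for $u \in \dom\calD_P$, valid because $P$ is pseudo-local of order $0$ and condition~\eqref{item:s.a-Fred 3} holds), combine it with coercivity at infinity to upgrade to $\|u\|_{H^1} \le C\|\calD_P u\|_{L^2}$ modulo a compact perturbation, and conclude finite-dimensional kernel and closed range; the same applied to $\calD_P^* = \calD_P$ gives finite-dimensional cokernel.

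Concretely I would structure the proof as: (i) cite \cite[\S 6--7]{BaerBallmann12} or \cite[\S 4]{BaerBallmann16} to identify $\dom\calD_P$ with the subspace of $H^1_\calD$ whose boundary trace lies in $\ker P \cap \check H(\calA)$, and note conditions~\eqref{item:s.a-Fred 1}--\eqref{item:s.a-Fred 3} are exactly their hypotheses for an ``elliptic boundary condition''; (ii) use the Green's formula and condition~\eqref{item:s.a-Fred 2} to show $(\calD_P)^* = \calD_{P'}$ where $P' = \rmc(\nu^*)P\rmc(\nu^*)^{-1}$ acts as the ``complementary'' condition, then check $\ker P' = \ker P$ as boundary subspaces (this is where $\rmc(\nu^*)P\rmc(\nu^*)^{-1} = \id - P$ and the anti-commutation of $\calA$ with $\rmc(\nu^*)$ enter, making the Lagrangian condition self-referential), yielding self-adjointness; (iii) invoke the B\"ar--Ballmann Fredholm criterion together with Definition~\ref{D:coercive} to get the Fredholm property. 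The main obstacle I expect is step~(ii): carefully matching the convention for the adapted operator, the sign of $\rmc(\nu^*)$, and the APS projection so that condition~\eqref{item:s.a-Fred 2} genuinely produces a self-adjoint — not merely symmetric — realization, and checking that the domain of the adjoint computed via Green's formula coincides on the nose with $\dom\calD_P$ rather than a larger space; this requires the full strength of the B\"ar--Ballmann trace theorem and elliptic regularity up to the boundary, and is the place where one cannot simply quote a formal computation. Everything else is a direct citation of \cite{BaerBallmann12,BaerBallmann16} specialized to the hypotheses at hand.
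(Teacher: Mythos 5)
Your proposal is correct and follows essentially the same route as the paper: the paper's proof is a direct appeal to B\"ar--Ballmann, citing \cite[Theorem~7.20]{BaerBallmann12} to identify condition~\eqref{item:s.a-Fred 3} with ellipticity of the boundary condition, noting that condition~\eqref{item:s.a-Fred 2} makes it self-adjoint, and invoking \cite[Corollary~8.6]{BaerBallmann12} together with coercivity at infinity for the Fredholm property. Your more detailed account of the Lagrangian/Green's-formula mechanism behind self-adjointness and of how coercivity localizes the analysis to a compact region is a faithful unpacking of those citations rather than a different argument.
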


In fact, by \cite[Theorem~7.20]{BaerBallmann12}, a pseudo-local boundary condition given by $P$ is elliptic if and only if $P$ satisfies Definition~\ref{D:s.a-Fred}.\ref{item:s.a-Fred 3}. Condition~\ref{item:s.a-Fred 2} of Definition~\ref{D:s.a-Fred} implies that this boundary condition is self-adjoint. And the Fredholmness follows from \cite[Corollary~8.6]{BaerBallmann12}.

\subsection{Boundary conditions on partitioned manifolds}\label{SS:partition}

Let $\calD$ be a Dirac-type operator acting on a Dirac bundle $S$ over a complete Riemannian manifold $M$. Without loss of generality, we assume that $M$ is without boundary. Let $\Sigma$ be a closed hypersurface of $M$ with trivial normal bundle. Cutting $M$ along $\Sigma$, we get a manifold $M^\cut$ whose boundary consists of two copies $\Sigma_1$ and $\Sigma_2$ of $\Sigma$. The Dirac-type operator $\calD$ naturally induces a Dirac-type operator $\calD^\cut:C^\infty(M^\cut,S_{|M^\cut})\to C^\infty(M^\cut,S_{|M^\cut})$.

In this setting, we can define the \emph{continuous transmission condition} for $\calD^\cut$, which is given by the domain
\[
\dom(\calD^\cut_{P_\Delta}):=\left\{u\in H^1_\calD(M^\cut,S_{|M^\cut})\;|\;\,u_{|\p M^\cut}=(f,f)\in L^2(\Sigma,S_{|\Sigma})\oplus L^2(\Sigma,S_{|\Sigma})\right\},
\]
under the canonical identification
\[
L^2(\p M^\cut,S_{|\p M^\cut})=L^2(\Sigma,S_{|\Sigma})\oplus L^2(\Sigma,S_{|\Sigma}).
\]
It is shown in \cite[Example~7.28]{BaerBallmann12} that this condition is an elliptic boundary condition. Equivalently, the continuous transmission condition is given by the continuous transmission projection
\[
P_\Delta=\frac{1}{2}\left(
\begin{matrix}
1 & -1 \\
-1 & 1
\end{matrix}\right).
\]
Clearly, $\calD^\cut_{P_\Delta}$ can be canonically identified with $\calD$.

\begin{remark}\label{R:trans b.c}
Let $\calA_\Sigma$ be a compatible adapted operator for $\calD^\cut$ on $\Sigma_1$. Then $-\calA_\Sigma$ is a compatible adapted operator on $\Sigma_2$. So $\calA=\calA_\Sigma\oplus-\calA_\Sigma$ is a compatible adapted operator for $\calD^\cut$ on $\p M^\cut$. As pointed out in \cite[p. 572]{KirkLesch04}, since $P_\Delta$ has off-diagonal terms, it is not a pseudo-differential operator on $S_{|\p M^\cut}$. But it is pseudo-differential on the bundle $S_{|\Sigma}\oplus S_{|\Sigma}$ over $\Sigma$.
In the discussion below, we will ignore this distinction, as it does not affect the argument.
\end{remark}

Note that if $Q\in{\rm Gr}(\calA_\Sigma)$, then $\id-Q\in{\rm Gr}(-\calA_\Sigma)$. So
\begin{equation}\label{E:s.a-b.c}
P:=\left(
\begin{matrix}
Q & 0 \\
0 & \id-Q
\end{matrix}\right)\in{\rm Gr}(\calA).
\end{equation}
For any $Q\in{\rm Gr}(\calA_\Sigma)$, we can connect $P_\Delta$ and $P$ by a path (cf. \cite[Section~3]{BruningLesch99})
\begin{equation}\label{E:s.a-b.c path}
P(s):=\left(
\begin{matrix}
Q\cos^2s+(\id-Q)\sin^2s & -\cos s\sin s \\
-\cos s\sin s & (\id-Q)\cos^2s+Q\sin^2s
\end{matrix}\right),\quad 0\le s\le\frac{\pi}{4},
\end{equation}
so that $P(0)=P$ and $P(\pi/4)=P_\Delta$.

\begin{lemma}[{\cite[Lemma~5.3]{KirkLesch04}}]\label{L:path s.a-b.c}
For any $s\in[0,\pi/4]$, $P(s)\in{\rm Gr}(\calA)$. Therefore, if $\calD$ is formally self-adjoint and coercive at infinity, then $\calD_s:=\calD^\cut_{P(s)}$ is a continuous family of self-adjoint Fredholm operators in the graph topology (see \cite{GorokhovskyLesch15}, also \cite{BoossLeschPhillips05,Lesch05sf}).
\end{lemma}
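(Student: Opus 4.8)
The plan is to verify the three conditions of Definition~\ref{D:s.a-Fred} for each $P(s)$, $s\in[0,\pi/4]$, with respect to the compatible adapted operator $\calA=\calA_\Sigma\oplus(-\calA_\Sigma)$, and then invoke Proposition~\ref{P:s.a-Fredholm} together with a continuity argument. Condition~\ref{item:s.a-Fred 1}, that $P(s)$ is a $0$-th order pseudo-differential operator on $S_{|\Sigma}\oplus S_{|\Sigma}$, is immediate: the entries of $P(s)$ are obtained from $Q$ and $\id-Q$ (both $0$-th order $\Psi$DOs on $S_{|\Sigma}$ since $Q\in\mathrm{Gr}(\calA_\Sigma)$) by multiplication with the smooth scalar functions $\cos^2 s$, $\sin^2 s$, $-\cos s\sin s$ of the parameter, so no symbol is destroyed; here I would silently adopt the convention of Remark~\ref{R:trans b.c} that we work on the doubled bundle over $\Sigma$. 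One also checks by a short matrix computation that $P(s)^2=P(s)$ and $P(s)^*=P(s)$, so $P(s)$ is genuinely an orthogonal projection — the trigonometric identity $\cos^2 s+\sin^2 s=1$ is exactly what makes the off-diagonal cross terms cancel.

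First I would dispose of condition~\ref{item:s.a-Fred 2}, the self-adjointness relation $\rmc(\nu^*)P(s)\rmc(\nu^*)^{-1}=\id-P(s)$. On $\p M^\cut=\Sigma_1\sqcup\Sigma_2$ the Clifford multiplication by the (outward or inward, fixed) unit conormal acts as $\rmc(\nu^*)=\gamma\oplus(-\gamma)$ for a fixed unitary $\gamma$ on $S_{|\Sigma}$ with $\gamma^2=-\id$; the sign flip between the two copies comes from the two copies $\Sigma_1,\Sigma_2$ carrying opposite inward normals. Since $Q\in\mathrm{Gr}(\calA_\Sigma)$ we have $\gamma Q\gamma^{-1}=\id-Q$. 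Conjugating the $2\times 2$ block matrix $P(s)$ by $\mathrm{diag}(\gamma,-\gamma)$ sends the diagonal blocks $Q\cos^2 s+(\id-Q)\sin^2 s$ and $(\id-Q)\cos^2 s+Q\sin^2 s$ to each other (using $\gamma Q\gamma^{-1}=\id-Q$) and leaves the off-diagonal block $-\cos s\sin s$ fixed up to the sign produced by the two $-1$'s on the conjugating matrix, which combine to $+1$ on the off-diagonal and flip it to $+\cos s\sin s$. Comparing with $\id-P(s)$ entry by entry gives the claim. This is the one place where the precise form of $P(s)$ matters, but it is a routine verification.

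The substantive point is condition~\ref{item:s.a-Fred 3}: that $\Pi_{\ge 0}|_{\IM P(s)}\colon \IM P(s)\to\IM\Pi_{\ge 0}$ is Fredholm, where $\Pi_{\ge 0}$ is the non-negative spectral projection of $\calA=\calA_\Sigma\oplus(-\calA_\Sigma)$. I expect this to be the main obstacle, and I would handle it as in \cite[Lemma~5.3]{KirkLesch04} and \cite[Section~3]{BruningLesch99}: the Fredholm property of $\Pi_{\ge 0}|_{\IM P}$ for the endpoint $P=P(0)$ in \eqref{E:s.a-b.c} is already known (it lies in $\mathrm{Gr}(\calA)$ by the remarks preceding the lemma, since $Q\in\mathrm{Gr}(\calA_\Sigma)$ forces $\id-Q\in\mathrm{Gr}(-\calA_\Sigma)$); the difference $P(s)-P(0)$ is a continuous (indeed smooth) family of operators that is a compact perturbation modulo lower order — more precisely, the off-diagonal entries are smoothing after localizing, and the change in the diagonal is a norm-continuous family of $0$-th order $\Psi$DOs whose principal symbol at $s$ differs from that at $0$ only by the scalar factors, so the relevant symbolic Fredholm/ellipticity condition (the Calderón-projector comparison of \cite[Theorem~7.20]{BaerBallmann12}) persists for all $s\in[0,\pi/4]$. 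Concretely, one shows $\IM P(s)$ and $\IM\Pi_{\ge 0}$ are commensurable subspaces for every $s$ by checking that $P(s)-\Pi_{\ge 0}$ differs from $P(0)-\Pi_{\ge 0}$ by a family that does not affect commensurability; this is where I would cite \cite[Lemma~5.3]{KirkLesch04} essentially verbatim, the only modification being that $M$ and hence $M^\cut$ need not be compact, which is harmless because $\Sigma$ is closed and all boundary operators live on the closed manifold $\Sigma$. Having established $P(s)\in\mathrm{Gr}(\calA)$ for all $s$, Proposition~\ref{P:s.a-Fredholm} gives that each $\calD_s=\calD^\cut_{P(s)}$ is self-adjoint and Fredholm provided $\calD$ is formally self-adjoint and coercive at infinity; finally, continuity of $s\mapsto\calD_s$ in the graph topology follows because $s\mapsto P(s)$ is continuous in operator norm and the domains $\dom\calD_s=\{u\in H^1_\calD(M^\cut,S):P(s)(u_{|\p M^\cut})=0\}$ vary continuously, so the graph projections vary continuously — this last step is standard and I would reference \cite{BoossLeschPhillips05,Lesch05sf,GorokhovskyLesch15}.
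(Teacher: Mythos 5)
Your proposal matches the paper's treatment: the paper offers no independent argument beyond citing \cite[Lemma~5.3]{KirkLesch04} for the membership $P(s)\in{\rm Gr}(\calA)$, then deduces self-adjoint Fredholmness from Proposition~\ref{P:s.a-Fredholm} and graph continuity via the Br\"uning--Lesch conjugating unitaries described in Remark~\ref{R:path s.a-b.c}, which is exactly the structure you follow (with the welcome extra of verifying conditions \ref{item:s.a-Fred 1} and \ref{item:s.a-Fred 2} explicitly). The one inaccuracy is your aside that the off-diagonal entries of $P(s)$ are ``smoothing after localizing''---they are scalar multiples of the identity, hence honest order-zero operators with non-trivial symbol---but since you defer condition~\ref{item:s.a-Fred 3} to Kirk--Lesch in any case, this does not affect the argument.
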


\begin{remark}\label{R:path s.a-b.c}
Here we view $\calD_s$ as a family of operators on a fixed domain $\dom\calD_0$. To be precise, there is a continuous family of unitary operators $\Omega_s$ on $L^2(M^\cut,S_{|M^\cut})$ as in \cite[Section~3]{BruningLesch99} (see also \cite[Section~6]{Shi22}) that map $\dom\calD_0$ to $\dom\calD_s$. By conjugating $\calD_s$ by $\Omega_s$, we get a norm continuous family of self-adjoint Fredholm operators from $\dom\calD_0$ to $L^2(M^\cut,S_{|M^\cut})$.
\end{remark}

\subsection{Spectral flow}\label{SS:spf}

Consider a self-adjoint Dirac-type operator $\calD$ which is coercive at infinity, acting on a Dirac bundle $S$ over a complete Riemannian manifold $M$ (possibly with boundary). Let $\rho$ be a unitary operator on $L^2(M,S)$, which preserves $\dom\calD$ and satisfies the assumption below.

\begin{assumption}\label{A:spf}
\begin{enumerate}
\item The commutator $[\calD,\rho]$ is a bounded zeroth-order differential operator (viewed as from $\dom\calD$ to $L^2(M,S)$).

\item For each $r\in[0,1]$, the operator
\[
\calD(r):=(1-r)\calD+r\rho^{-1}\calD\rho=\calD+r\rho^{-1}[\calD,\rho]
\]
is coercive at infinity.
\end{enumerate}
\end{assumption}

Under this assumption, $\calD(r)$, $r\in[0,1]$ is a continuous path of self-adjoint Fredholm Dirac-type operators in the graph topology; cf. \cite{BoossLeschPhillips05,Lesch05sf}. We recall the general definition of spectral flow; cf. \cite{GorokhovskyLesch15}.

\begin{definition}\label{D:spf}
For a graph continuous path of self-adjoint Fredholm operators $f(r)$, $r\in[0,1]$, its \emph{spectral flow} is defined as
\[
\spf(f):=\sum_{j=1}^n\left(\rank\big(1_{[0,\epsilon_j)}(f(r_j))\big)-\rank\big(1_{[0,\epsilon_j)}(f(r_{j-1}))\big)\right),
\]
where $0=r_0<r_1<\dots<r_n=1$ is a subdivision of $[0,1]$ such that there exist $\epsilon_j>0$, $j=1,\dots,n$ satisfying $\pm\epsilon_j\notin\spec f(r)$ and $[-\epsilon_j,\epsilon_j]\cap\spec_{\rm ess}f(r)=\emptyset$ for $r\in[r_{j-1},r_j]$, and $1_{[0,\epsilon)}$ denotes the characteristic function of $[0,\epsilon)$.
\end{definition}

This definition coincides with the intuitive definition that $\spf(f)$ is the net number of eigenvalues of $f(r)$ that change from negative to non-negative as $r$ varies from 0 to 1.
In the case that $f(r)=\calD(r)=(1-r)\calD+r\rho^{-1}\calD\rho$, we denote the spectral flow by $\spf(\calD,\rho)$.

Notice that when $M$ is a manifold with boundary, we can choose a fixed compatible adapted operator $\calA$ on $\p M$ to $\calD(r)$ for any $r\in[0,1]$. Now we focus on the situation of last subsection that $M$ is without boundary and is partitioned along a closed hypersurface $\Sigma$. In this case, we have the following splitting formula for the spectral flow.

\begin{theorem}\label{T:splitting spf}
Let $M^\cut$ be constructed from cutting $M$ along $\Sigma$ as in last subsection. Pick a $Q\in{\rm Gr}(\calA_\Sigma)$ and form $P\in{\rm Gr}(\calA)$ like \eqref{E:s.a-b.c}. Assume $\rho$ is a unitary operator on $M$ that commutes with $Q$ and satisfies Assumption~\ref{A:spf}. Then
\[
\spf(\calD,\rho)=\spf(\calD^\cut_P,\rho).
\]

In particular, if $M$ is decomposed into two components, that is, $M^\cut=M'\cup_\Sigma M''$, then
\[
\spf(\calD,\rho)=\spf(\calD'_Q,\rho')+\spf(\calD''_{\id-Q},\rho''),
\]
where $\calD'$ and $\calD''$ are the restrictions of $\calD$ to $M'$ and $M''$, respectively, similar for $\rho'$ and $\rho''$.
\end{theorem}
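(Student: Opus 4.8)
The plan is to deform the continuous transmission condition $P_\Delta$ into the decoupled projection $P$ of \eqref{E:s.a-b.c}, using the path $P(s)$ from \eqref{E:s.a-b.c path}, and argue that this deformation contributes zero spectral flow at every level $r$ of the homotopy parameter. More precisely, I would consider the two-parameter family of self-adjoint Fredholm operators $\calD^\cut_{P(s)}(r)$ for $(s,r)\in[0,\pi/4]\times[0,1]$, where $\calD^\cut_{P(s)}(r):=(1-r)\calD^\cut_{P(s)}+r\rho^{-1}\calD^\cut_{P(s)}\rho$. For this to make sense I must first check that $\rho$ (which commutes with $Q$, hence with the off-diagonal entries built from $Q$ in \eqref{E:s.a-b.c path} as well as with $\id-Q$) preserves $\dom\calD^\cut_{P(s)}$ for all $s$; this follows because $\rho$ commutes with the projection $P(s)$, so it maps the boundary constraint $P(s)(u_{|\p M^\cut})=0$ to itself, and it preserves $H^1_{\calD^\cut}$ by Assumption~\ref{A:spf}(1). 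Combined with Lemma~\ref{L:path s.a-b.c} and Proposition~\ref{P:s.a-Fredholm}, every operator in the family is self-adjoint and Fredholm, and after conjugating by the unitaries $\Omega_s$ of Remark~\ref{R:path s.a-b.c} the family is jointly norm-continuous on the fixed domain $\dom\calD^\cut_{P(0)}$.

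Next I would invoke the homotopy invariance of spectral flow for norm-continuous families of self-adjoint Fredholm operators (as in \cite{GorokhovskyLesch15,BoossLeschPhillips05,Lesch05sf}): the spectral flow around the boundary of the rectangle $[0,\pi/4]\times[0,1]$ vanishes. The two horizontal edges are $s\mapsto\calD^\cut_{P(s)}(0)=\calD^\cut_{P(s)}$ and $s\mapsto\calD^\cut_{P(s)}(1)=\rho^{-1}\calD^\cut_{P(s)}\rho$; the two vertical edges are $r\mapsto\calD^\cut_{P_\Delta}(r)$ (at $s=\pi/4$) and $r\mapsto\calD^\cut_{P}(r)$ (at $s=0$). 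Hence
\[
\spf\big(r\mapsto\calD^\cut_{P}(r)\big)-\spf\big(r\mapsto\calD^\cut_{P_\Delta}(r)\big)=\spf\big(s\mapsto\rho^{-1}\calD^\cut_{P(s)}\rho\big)-\spf\big(s\mapsto\calD^\cut_{P(s)}\big).
\]
The left-hand first term is $\spf(\calD^\cut_P,\rho)$ by definition, and the second is $\spf(\calD,\rho)$ since $\calD^\cut_{P_\Delta}$ is canonically identified with $\calD$ (and $\rho$ intertwines these identifications). It remains to show the right-hand side vanishes: conjugation by the fixed unitary $\rho$ is a graph-continuous isometry of the space of self-adjoint Fredholm operators, hence preserves spectral flow, so $\spf(s\mapsto\rho^{-1}\calD^\cut_{P(s)}\rho)=\spf(s\mapsto\calD^\cut_{P(s)})$ and the two terms cancel. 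This yields $\spf(\calD^\cut_P,\rho)=\spf(\calD,\rho)$, which is the first assertion. The "in particular" statement is then immediate: when $M^\cut=M'\cup_\Sigma M''$ the operator $\calD^\cut_P$ is the direct sum $\calD'_Q\oplus\calD''_{\id-Q}$ and $\rho=\rho'\oplus\rho''$ respects this splitting (since $\rho$ commutes with $Q$ it preserves each summand), so the spectral flow is additive over the two blocks.

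The main obstacle I anticipate is purely functional-analytic bookkeeping rather than a conceptual difficulty: namely, justifying the joint norm-continuity of $(s,r)\mapsto\Omega_s^{-1}\calD^\cut_{P(s)}(r)\Omega_s$ on the fixed domain, so that the homotopy invariance of spectral flow around the rectangle genuinely applies. Two subtleties enter here. First, one must make sure the essential spectrum stays uniformly away from $0$ along the whole homotopy, i.e. that coercivity at infinity is uniform in $(s,r)$; since the boundary $\Sigma$ is compact and Assumption~\ref{A:spf}(2) gives coercivity of $\calD(r)$ on $M$, the cut operators inherit coercivity at infinity uniformly, the boundary deformation affecting only a compact collar. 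Second, the off-diagonal nature of $P(s)$ means $P(s)$ is pseudo-differential on $S_{|\Sigma}\oplus S_{|\Sigma}$ over $\Sigma$ rather than on $S_{|\p M^\cut}$ (Remark~\ref{R:trans b.c}); as noted there this does not affect the argument, but one should state explicitly that the B\"ar--Ballmann framework and Lemma~\ref{L:path s.a-b.c} continue to apply verbatim. Once these points are in place, the rectangle argument is routine.
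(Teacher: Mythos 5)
Your proposal is correct and follows essentially the same route as the paper: the paper also deforms $P_\Delta$ to $P$ along the path $P(s)$ of \eqref{E:s.a-b.c path}, notes that $\rho$ commutes with each $P(s)$ so that the two-parameter family is well defined, and invokes the homotopy invariance of spectral flow exactly as in the rectangle argument of \cite[Proposition~2.1]{GorokhovskyLesch15}, with norm/graph continuity supplied by Lemma~\ref{L:path s.a-b.c} and the conjugating unitaries $\Omega_s$ of Remark~\ref{R:path s.a-b.c}. The functional-analytic points you flag (joint continuity on a fixed domain, uniform coercivity, the pseudo-differential status of $P(s)$) are precisely the ones the paper disposes of via those same references.
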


\begin{proof}
Since $\rho$ commutes with $Q$, it also commutes with $P(s)$ of \eqref{E:s.a-b.c path} for any $s\in[0,\pi/4]$. Let $\calD_s$, $s\in[0,\pi/4]$ be the family of self-adjoint Fredholm operators on $M^\cut$ associated to $P(s)$ considered in Lemma~\ref{L:path s.a-b.c}. Then $\rho$ preserves $\dom\calD_s$ for any $s\in[0,\pi/4]$.

From Lemma~\ref{L:path s.a-b.c}, viewed as a family of self-adjoint Fredholm operators from $\dom\calD_0$ to $L^2(M^\cut,S_{|M^\cut})$, $\calD_s$, $s\in[0,\pi/4]$ is continuous in the norm topology. It is then graph continuous by \cite[Proposition~2.2]{Lesch05sf}. Now by the homotopy invariance of spectral flow and argue as in the proof of \cite[Proposition~2.1]{GorokhovskyLesch15}, $\spf(\calD_0,\rho)=\spf(\calD_{\pi/4},\rho)$. Note that $\calD_0=\calD^\cut_P$, $\calD_{\pi/4}=\calD$, the thesis then follows.
\end{proof}

\begin{remark}\label{R:conjugate Dirac}
If $\rho\in C^\infty(M,U(k))$, namely $\rho$ is a smooth function on $M$ with values in the unitary group $U(k)$, consider the twisted bundle $S\otimes\CC^k$. One can check that it is again a Dirac bundle with Clifford multiplication acting as identity on $\CC^k$ and with a family of connections given by
\[
\nabla^{S\otimes\CC^k}(r):=\nabla^S\otimes\id+\id\otimes(\D+r\rho^{-1}[\D,\rho]),\quad 0\le r\le1.
\]
Compare \cite[Section~3.1]{LiSuWang24}. This induces a family of Dirac operators that is exactly the family $\calD(r)=(1-r)\calD+r\rho^{-1}\calD\rho$ as above. In other words, we view each $\calD(r)$ as a Dirac operator acting on sections of $S\otimes\CC^k$, although it may not be mentioned explicitly. We will mainly focus on this situation in the discussion below.
\end{remark}

\begin{remark}\label{R:splitting spf}
Suppose $Q$ defines a local boundary condition and $\rho\in C^\infty(M,U(k))$ as above. One sees that $\rho$ commutes with $Q$, thus preserves $\dom(\calD'_Q)$ and $\dom(\calD''_{\id-Q})$. If furthermore, $\rho$ is locally constant at infinity, then the hypothesis of Theorem \ref{T:splitting spf} is satisfied, and the splitting formula holds.
\end{remark}

\section{Callias operators in odd dimensions from a Gromov--Lawson pair}\label{S:Callias}

In this section, we study Callias operators on manifolds with boundary, following the relative Dirac bundle set-up of Cecchini--Zeidler \cite{CeccZeid24GT}. For the Callias operators constructed from a Gromov--Lawson pair, we get a formula for the spectral flow, which is an odd-dimensional analogue of \cite[Corollary~3.9]{CeccZeid24GT}.

\subsection{Relative Dirac bundle and Callias operators on manifolds with boundary}\label{SS:Callias}

We recall Cecchini--Zeidler's definition of relative Dirac bundle, which provides an abstract setting for the construction of Callias operators.

\begin{definition}[{\cite[Definition~2.2]{CeccZeid24GT}}]\label{D:rel Dirac}
Let $M$ be a complete Riemannian manifold (possibly with compact boundary) endowed with a Dirac bundle $S$. Let $K\Subset M^\circ$ be a compact subset in the interior of $M$. $S$ is said to be a \emph{relative Dirac bundle} with support $K$ if there is a self-adjoint, parallel bundle involution $\theta\in C^\infty(M\setminus K,\End(S))$ such that $\rmc(\xi)\theta=-\theta\rmc(\xi)$ for any $\xi\in T^*M_{|M\setminus K}$ and $\theta$ admits a smooth extension to a bundle endomorphism on an open neighborhood of $\overline{M\setminus K}$.
\end{definition}

Let $(S,\theta)$ be a relative Dirac bundle with support $K\Subset M^\circ$. Consider a \emph{Lipschitz} function $\psi:M\to\RR$ such that $\psi=0$ on $K$. Extending $\psi\theta$ by zero on $K$, one can construct a formally self-adjoint Dirac-type operator
\begin{equation}\label{E:Callias}
\calD_\psi:=D+\psi\theta,
\end{equation}
where $D$ is the Dirac operator on $S$. Note that $\theta$ anti-commutes with $D$, so
\[
\calD_\psi^2=D^2+\psi^2+\rmc(\D\psi)\theta.
\]
Assume that $\psi^2-|\D\psi|$ is uniformly positive outside a compact subset. In this case $\calD_\psi$ is called a \emph{Callias operator} with potential $\psi$. This notion coincides with the traditional concept of Callias-type operators considered before (see, e.g., \cite{Callias78,Anghel93Callias,Bunke95}).
In this paper, we shall focus on a special kind of potentials.

\begin{definition}[{\cite[Definition~3.1]{CeccZeid24GT}}]\label{D:admi potential}
If there exists a compact subset $K\Subset L\Subset M$ with $\psi$ equal to a non-zero constant on each component of $M\setminus L$, then $\psi$ is called an \emph{admissible potential}.
\end{definition}

Having a relative Dirac bundle $(S,\theta)$ and an admissible potential $\psi$ on a complete manifold $M$ with compact boundary, one can define a local boundary condition as follows. Let $\bfs:\p M\to\{\pm1\}$ be a locally constant function. The \emph{boundary chirality} associated to $\bfs$ is defined to be
\[
\chi:=\bfs\rmc(\nu^*)\theta:S_{|\p M}\to S_{|\p M}.
\]
Note that $\chi$ is a self-adjoint involution. Thus, it induces an orthogonal decomposition $S_{|\p M}=S^+\oplus S^-$, where $S^\pm$ are the $\pm1$-eigenspaces of $\chi$. Since $\chi$ anti-commutes with $\rmc(\nu^*)$ while commutes with $\rmc(\xi)$ for $\xi\in T^*\p M$, it anti-commutes with $\rmc(\nu^*)^{-1}\rmc(\xi)$. This means that $\rmc(\nu^*)$ interchanges $S^+$ and $S^-$, and that $\chi$ anti-commutes with $\calA$ (a compatible adapted operator to $\calD_\psi$). By \cite[Example~4.20]{BaerBallmann16}, $\chi$ induces a self-adjoint elliptic local boundary condition for $\calD_\psi$ with the domain given by
\[
H^1_{\theta,\bfs}(M,S):=\left\{u\in H^1_D(M,S)\;|\;\chi(u_{|\p M})=u_{|\p M}\right\}.
\]
(Here $H^1_D(M,S)=H^1_{\calD_\psi}(M,S)$ as $\psi\theta\in L^\infty(M,\End(S))$.)
In the perspective of Section~\ref{SS:s.a-Fred Dirac}, this condition is induced by the orthogonal projection $P:=\frac{1}{2}(\id-\chi)\in{\rm Gr}(\calA)$. Since a Callias operator is coercive at infinity, by Proposition~\ref{P:s.a-Fredholm}, $\calD_\psi$ with the above boundary condition, denoted by $\calD_{\psi,\bfs}$, is a self-adjoint Fredholm operator. (When the potential $\psi$ is smooth, this is standard. It is not hard to extend it to Lipschitz situation (see \cite[Section~3]{CeccZeid24GT}).)

\subsection{Spectral flow of Callias operators}\label{SS:Callias spf}

In last subsection, we define a Callias operator $\calD_{\psi,\bfs}$ on a complete Riemannian manifold $M$ with compact boundary, which is self-adjoint and Fredholm. Now we discuss its spectral flow.

Let $\rho$ be a smooth function on $M$ with values in the unitary group $U(k)$ such that $\rho$ is locally constant at infinity. Then $\calD_{\psi,\bfs}$ and $\rho$ satisfy Assumption~\ref{A:spf}, so one can talk about the spectral flow $\spf(\calD_{\psi,\bfs},\rho)$. From \eqref{E:Callias}, it corresponds to the family of operators
\begin{equation}\label{E:Dpsi(r)}
\calD_\psi(r):=(1-r)\calD_\psi+r\rho^{-1}\calD_\psi\rho=D(r)+\psi\theta,\quad 0\le r\le1
\end{equation}
with the chiral boundary condition. Here $D(r)=(1-r)D+r\rho^{-1}D\rho$ is a family of Dirac operators due to Remark~\ref{R:conjugate Dirac}. By the fact that $\rho$ commutes with both $\psi$ and $\theta$, one computes from the BSLW formula \eqref{E:BSLW for} that
\begin{eqnarray}
\calD_\psi^2(r)&\!\!\!=\!\!\!&D^2(r)+\psi^2+\rmc(\D\psi)\theta \label{E:Dpsi(r) square}\\
&\!\!\!=\!\!\!&(\nabla^*\nabla)(r)+\calR(r)+\psi^2+\rmc(\D\psi)\theta.\label{E:Dpsi(r) square-1}
\end{eqnarray}

\begin{lemma}\label{L:spf homotop inv}
Let $\psi_1,\psi_2$ be two admissible potentials on $M$ that coincide at infinity. Then for any $\rho\in C^\infty(M,U(k))$ that is locally constant at infinity and a choice of signs $\bfs:\pM\to\{\pm1\}$,
\[
\spf(\calD_{\psi_1,\bfs},\rho)=\spf(\calD_{\psi_2,\bfs},\rho).
\]
\end{lemma}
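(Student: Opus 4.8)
The plan is to prove that the spectral flow $\spf(\calD_{\psi,\bfs},\rho)$ depends only on the behaviour of the admissible potential $\psi$ near infinity, by constructing a homotopy through the affine family $\psi_u:=(1-u)\psi_1+u\psi_2$, $u\in[0,1]$, and invoking homotopy invariance of the spectral flow.

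\textbf{Step 1: Reduction to a two-parameter family.} Since $\psi_1$ and $\psi_2$ coincide at infinity, each $\psi_u$ is again Lipschitz, vanishes on the support $K$ of $\theta$ (as both $\psi_i$ do), and agrees with the common value of $\psi_1,\psi_2$ outside the compact set $L$ that works for both; in particular each $\psi_u$ is an admissible potential with the same compact sets $K\Subset L$. Because $\psi_u^2-|\D\psi_u|$ equals the corresponding quantity for $\psi_1=\psi_2$ outside $L$, it is uniformly positive there, so each $\calD_{\psi_u}$ is a Callias operator; with the chiral boundary condition induced by the same $\chi=\bfs\rmc(\nu^*)\theta$ (which does not see $\psi$ at all), Proposition~\ref{P:s.a-Fredholm} gives that $\calD_{\psi_u,\bfs}$ is self-adjoint and Fredholm for every $u$. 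The map $u\mapsto\psi_u$ is continuous from $[0,1]$ to $L^\infty(M)$, hence $u\mapsto\psi_u\theta$ is continuous into bounded zeroth-order operators, and therefore $(u,r)\mapsto\calD_{\psi_u}(r)=D(r)+\psi_u\theta$ is a norm-continuous two-parameter family of self-adjoint Fredholm operators on the fixed domain $H^1_{\theta,\bfs}(M,S)$ (the domain is independent of both $u$ and $r$ since $\rho$ preserves it and the boundary condition is unchanged).

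\textbf{Step 2: Homotopy invariance of the spectral flow.} I would now apply the standard homotopy invariance of the spectral flow for norm-continuous families of self-adjoint Fredholm operators with fixed endpoints' difference (cf.\ the references already cited: \cite{BoossLeschPhillips05,Lesch05sf,GorokhovskyLesch15}). The four corners of the square $[0,1]_u\times[0,1]_r$ are: $\calD_{\psi_1}$ and $\calD_{\psi_2}$ along $r=0$, and $\rho^{-1}\calD_{\psi_1}\rho$ and $\rho^{-1}\calD_{\psi_2}\rho$ along $r=1$. The bottom edge $r=0$ is the path $u\mapsto\calD_{\psi_u,\bfs}$, and the top edge $r=1$ is its conjugate by the fixed unitary $\rho$; conjugation by a fixed unitary preserving the domain does not change the spectrum, so the spectral flow along the top edge equals that along the bottom edge. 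By the homotopy invariance applied to the square (the concatenation of the four edges is a contractible loop, or equivalently the two vertical edges $r\mapsto\calD_{\psi_i}(r)$ have spectral flows $\spf(\calD_{\psi_i,\bfs},\rho)$ whose difference equals the difference of the two horizontal spectral flows), we get $\spf(\calD_{\psi_1,\bfs},\rho)-\spf(\calD_{\psi_2,\bfs},\rho)=0$.

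\textbf{Main obstacle.} The essential point to verify carefully is the uniformity needed for the spectral flow to be well defined along the whole family: one needs a uniform compact set and a uniform positive lower bound on $\psi_u^2-|\D\psi_u|$ outside it, so that $0$ stays away from the essential spectrum of $\calD_{\psi_u}(r)$ uniformly in $(u,r)$ (here one also uses that $D(r)$ differs from $D$ by the bounded term $r\rho^{-1}[D,\rho]$, which is compactly supported since $\rho$ is locally constant at infinity, so it does not affect coercivity at infinity). This is exactly what the coincidence of $\psi_1$ and $\psi_2$ near infinity buys us, since then $\psi_u$ is \emph{independent} of $u$ outside $L$; combined with the compactly supported perturbation $r\rho^{-1}[D,\rho]$ this gives a genuinely uniform coercivity-at-infinity estimate via \eqref{E:Dpsi(r) square}. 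Once this uniformity is in place, the rest is a routine application of homotopy invariance, so I expect the writing to be short.
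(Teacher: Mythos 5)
Your proposal is correct and follows essentially the same route as the paper: interpolate linearly between the two potentials and invoke homotopy invariance of the spectral flow, using that $\psi_1$ and $\psi_2$ agree outside a compact set so that Fredholmness persists along the whole family. The only cosmetic difference is that the paper secures Fredholmness by observing that $\calD_{\psi_2}-\calD_{\psi_1}$ is a \emph{compact} operator from $H^1_{\theta,\bfs}(M,S)$ to $L^2(M,S)$ (the difference of potentials is compactly supported), whereas you verify directly that each interpolant $\psi_u$ is an admissible potential with uniform coercivity at infinity; both mechanisms are valid and lead to the same conclusion.
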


\begin{proof}
This lemma is an immediate consequence of the homotopy invariance of the spectral flow \cite[Proposition~2.1]{GorokhovskyLesch15} after noticing that $\calD_{\psi_2}-\calD_{\psi_1}$ is a compact operator from $H^1_{\theta,\bfs}(M,S)$ to $L^2(M,S)$ (cf. \cite[Lemma~3.3]{CeccZeid24GT}), so that one can connect them by a continuous path of self-adjoint Fredholm operators.
\end{proof}

The following two lemmas are spectral flow versions of \cite[Lemmas~3.7 and 3.8]{CeccZeid24GT} with essentially the same proofs (using \eqref{E:Dpsi(r) square}).

\begin{lemma}\label{L:spf vanishing-1}
Let $M$ be a complete Riemannian manifold with compact boundary, endowed with a relative Dirac bundle $(S,\theta)$. Let $\psi$ be an admissible potential on $M$ and $\bfs:\pM\to\{\pm1\}$ be a choice of signs such that
\begin{enumerate}
\item there exists $C>0$ such that $\psi^2-|\D\psi|\ge C$ on all of $M$; and

\item $\bfs\psi\ge0$ along $\pM$.
\end{enumerate}
Then for all $r\in[0,1]$ and $\rho\in C^\infty(M,U(k))$, which is locally constant at infinity, the operator $\calD_{\psi,\bfs}(r)$ is invertible. In particular, $\spf(\calD_{\psi,\bfs},\rho)=0$.
\end{lemma}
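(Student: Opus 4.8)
\textbf{Proof plan for Lemma~\ref{L:spf vanishing-1}.} The strategy is to show that every operator $\calD_{\psi,\bfs}(r)$ in the family, $r\in[0,1]$, is invertible; since an invertible self-adjoint Fredholm operator contributes nothing to the spectral flow and the whole path stays invertible (hence $0\notin\spec\calD_{\psi,\bfs}(r)$ throughout), the spectral flow must vanish. So the real content is the invertibility claim, and for that I would establish the a priori estimate $\|\calD_{\psi,\bfs}(r)u\|_{L^2}^2\ge C\|u\|_{L^2}^2$ for all $u$ in the domain $H^1_{\theta,\bfs}(M,S)$ (tensored with $\CC^k$), with the same $C$ as in hypothesis~(1). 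Combined with the fact that $\calD_{\psi,\bfs}(r)$ is self-adjoint and Fredholm (Proposition~\ref{P:s.a-Fredholm}, applied to each $\calD_\psi(r)$ with the fixed chiral boundary condition and noting each is still a Callias operator, coercive at infinity), such a two-sided estimate forces the operator to be invertible.

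\textbf{Key steps.} First I would square the operator using \eqref{E:Dpsi(r) square}: $\calD_\psi^2(r)=D^2(r)+\psi^2+\rmc(\D\psi)\theta$, so that for $u\in\dom\calD_{\psi,\bfs}(r)$ we have
\[
\|\calD_{\psi,\bfs}(r)u\|^2=\langle\calD_\psi^2(r)u,u\rangle=\|D(r)u\|_{\text{(bulk)}}^2+\langle(\psi^2+\rmc(\D\psi)\theta)u,u\rangle+(\text{boundary term}).
\]
Here the integration by parts that moves one $\calD_\psi(r)$ across is exactly \cite[Section~3]{CeccZeid24GT} (or B\"ar--Ballmann); the crucial point is that the chiral boundary condition defined by $\chi=\bfs\rmc(\nu^*)\theta$ makes the resulting boundary integral equal to $\int_{\pM}\langle\bfs\,\psi\,\theta\,\rmc(\nu^*)\theta u,u\rangle$-type expression, which by $\rmc(\nu^*)\theta=-\theta\rmc(\nu^*)$, $\theta^2=\id$, and $\chi u=u$ reduces to a multiple of $\int_{\pM}\bfs\psi\,|u|^2$, and this is $\ge0$ precisely by hypothesis~(2). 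Next, the pointwise estimate $|\langle\rmc(\D\psi)\theta u,u\rangle|\le|\D\psi|\,|u|^2$ (since $\rmc(\D\psi)\theta$ is skew-adjoint times... more carefully, $\|\rmc(\D\psi)\theta\|_{\text{op}}\le|\D\psi|$) gives $\psi^2+\rmc(\D\psi)\theta\ge\psi^2-|\D\psi|\ge C$ by hypothesis~(1). Dropping the nonnegative terms $\|D(r)u\|^2$ and the boundary term yields $\|\calD_{\psi,\bfs}(r)u\|^2\ge C\|u\|^2$. Finally, invertibility plus the homotopy invariance/definition of spectral flow (Definition~\ref{D:spf}) gives $\spf(\calD_{\psi,\bfs},\rho)=0$.

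\textbf{Main obstacle.} The only genuinely delicate point is justifying the integration by parts and the precise sign of the boundary term for the \emph{Lipschitz} potential $\psi$ and for the deformed operator $D(r)$ rather than $D$ itself. For the boundary term one must check that $\rho$ (being smooth, $U(k)$-valued, and acting trivially on the Clifford and $\theta$ structure) does not disturb the computation — this is immediate since $\rho$ commutes with $\rmc(\nu^*)$, $\theta$, and $\psi$, so the chiral boundary condition and all algebraic identities pass to $S\otimes\CC^k$ verbatim. For the Lipschitz regularity, one invokes that $\psi\theta\in L^\infty$ so $H^1_{D(r)}=H^1_{\calD_\psi(r)}$ and the B\"ar--Ballmann integration-by-parts formula still applies (as already used in \cite[Section~3]{CeccZeid24GT}); $\D\psi$ exists a.e.\ and is in $L^\infty$, which is all that the pointwise bound needs. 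These checks are routine given the cited results, so I would state them briefly and refer to \cite[Lemmas~3.7--3.8]{CeccZeid24GT} for the parallel arguments.
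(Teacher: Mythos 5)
Your proposal is correct and matches the paper's intended argument: the paper proves this lemma by citing \cite[Lemma~3.7]{CeccZeid24GT} and observing that the same computation goes through for the deformed operators via the identity $\calD_\psi^2(r)=D^2(r)+\psi^2+\rmc(\D\psi)\theta$, which is exactly the estimate you carry out (nonnegative boundary term from $\chi u=u$ and hypothesis~(2), pointwise bound $\psi^2+\rmc(\D\psi)\theta\ge\psi^2-|\D\psi|\ge C$ from hypothesis~(1), hence uniform invertibility and vanishing spectral flow). The points you flag as delicate (Lipschitz $\psi$, compatibility of $\rho$ with the chiral condition) are handled exactly as you suggest, by reference to \cite[Section~3]{CeccZeid24GT}.
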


\begin{lemma}\label{L:spf vanishing-2}
Let $M$ be a compact Riemannian manifold with boundary and $(S,\theta)$ be a relative Dirac bundle over $M$ with \emph{empty} support. Let $\bfs:\pM\to\{\pm1\}$ be a choice of signs. Then any Lipschitz function $\psi:M\to\RR$ is an admissible potential and for any $\rho\in C^\infty(M,U(k))$, $\spf(\calD_{\psi,\bfs},\rho)$ is independent of $\psi$.

Furthermore, if the sign $\bfs$ is constant on all of $\pM$, then $\spf(\calD_{\psi,\bfs},\rho)=0$ for any $\psi$ and $\rho$.
\end{lemma}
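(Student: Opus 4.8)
The plan is to deduce both assertions from the two preceding lemmas, using that on a \emph{compact} manifold every hypothesis phrased ``at infinity'' is automatically satisfied. Lemma~\ref{L:spf homotop inv} will furnish the independence of $\spf(\calD_{\psi,\bfs},\rho)$ on the potential $\psi$, and Lemma~\ref{L:spf vanishing-1} will force the spectral flow to vanish once $\psi$ is replaced by a constant with the right sign.

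For the first part, since $M$ is compact one may take $L=M$ in Definition~\ref{D:admi potential}, so that $M\setminus L=\emptyset$ and the defining condition on $\psi$ holds vacuously; as the requirement $\psi=0$ on the (empty) support is also empty, every Lipschitz $\psi:M\to\RR$ is an admissible potential, and $\calD_\psi=D+\psi\theta$ is trivially a Callias operator (positivity of $\psi^2-|\D\psi|$ being demanded only outside a compact subset, which we take to be all of $M$). Compactness of $M$ likewise makes $\rho$ locally constant at infinity and $\calD_{\psi,\bfs}$ coercive at infinity, so $\spf(\calD_{\psi,\bfs},\rho)$ is well-defined and the domain $H^1_{\theta,\bfs}(M,S)$ does not depend on $\psi$. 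Since any two Lipschitz potentials trivially coincide at infinity, Lemma~\ref{L:spf homotop inv} applies verbatim and gives $\spf(\calD_{\psi_1,\bfs},\rho)=\spf(\calD_{\psi_2,\bfs},\rho)$. (One could also argue directly along the path $\psi_t=(1-t)\psi_1+t\psi_2$, using that $(\psi_2-\psi_1)\theta$ is a compact operator from $H^1_{\theta,\bfs}(M,S)$ to $L^2(M,S)$ by Rellich's theorem, together with homotopy invariance of the spectral flow.)

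For the second part, suppose $\bfs\equiv\varepsilon$ is constant on $\pM$, with $\varepsilon\in\{\pm1\}$. By the independence just established it suffices to compute $\spf(\calD_{\psi,\bfs},\rho)$ for the single potential $\psi\equiv\varepsilon c$, $c>0$ a constant, which is admissible since the support is empty. Then $\psi^2-|\D\psi|=c^2>0$ on all of $M$, and $\bfs\psi=\varepsilon^2 c=c\ge0$ along $\pM$, so both hypotheses of Lemma~\ref{L:spf vanishing-1} hold; that lemma then shows $\calD_{\psi,\bfs}(r)$ is invertible for every $r\in[0,1]$, whence $\spf(\calD_{\psi,\bfs},\rho)=0$. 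Combined with the first part, $\spf(\calD_{\psi,\bfs},\rho)=0$ for every admissible $\psi$ and every $\rho$.

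I do not expect a genuine obstacle here: the entire proof is a reduction to Lemmas~\ref{L:spf homotop inv} and~\ref{L:spf vanishing-1}. The only points requiring a moment's care are (i) that the admissibility clause of Definition~\ref{D:admi potential} is vacuous on a compact manifold and that the empty support lets $\psi$ be a nonzero constant, and (ii) that the sign of that constant must be matched to $\varepsilon$ so that $\bfs\psi\ge 0$ on the boundary --- precisely the inequality needed to invoke Lemma~\ref{L:spf vanishing-1}.
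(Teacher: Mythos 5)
Your proof is correct and follows essentially the same route as the paper, which simply notes that this is the spectral-flow version of \cite[Lemma~3.8]{CeccZeid24GT} with the same proof: independence of $\psi$ via the compact-perturbation/homotopy argument of Lemma~\ref{L:spf homotop inv} (vacuously applicable on a compact manifold), and vanishing by specializing to a constant potential whose sign matches $\bfs$ so that Lemma~\ref{L:spf vanishing-1} gives invertibility of the whole family. No gaps.
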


Now consider the case that $M$ is without boundary, and $M^\cut$ is obtained from cutting $M$ along a closed hypersurface $\Sigma$ as in Section~\ref{SS:partition}. Suppose the choice of signs $\bfs:\p M^\cut=\Sigma_1\sqcup\Sigma_2\to\{\pm1\}$ satisfies $\bfs_{|\Sigma_1}=\bfs_{|\Sigma_2}$. Then on $\Sigma_1$ the boundary condition is induced by $Q:=\frac{1}{2}(\id-\chi_{|\Sigma_1})$, while on $\Sigma_2$ the boundary condition is induced by $\frac{1}{2}(\id-\chi_{|\Sigma_2})=\frac{1}{2}(\id+\chi_{|\Sigma_1})=\id-Q$. (This is because $\rmc(\nu^*_{|\Sigma_1})=-\rmc(\nu^*_{|\Sigma_2})$.) Therefore, in this case we get a self-adjoint elliptic boundary condition. It then follows from Theorem~\ref{T:splitting spf} and Remark~\ref{R:splitting spf} that

\begin{corollary}\label{C:spf Callias}
Let $M^\cut$ be constructed from cutting $M$ along $\Sigma$ with $\p M^\cut=\Sigma_1\sqcup\Sigma_2$ as above. Let $\calD_\psi$ be a Callias operator on $M$ and $\calD^\cut_\psi$ be the resulting operator on $M^\cut$. Choose signs $\bfs:\p M^\cut\to\{\pm1\}$ such that $\bfs_{|\Sigma_1}=\bfs_{|\Sigma_2}$. Then for any $\rho\in C^\infty(M,U(k))$ that is locally constant at infinity,
\[
\spf(\calD_\psi,\rho)=\spf(\calD^\cut_{\psi,\bfs},\rho).
\]
\end{corollary}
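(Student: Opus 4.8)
The plan is to recognize $\calD^\cut_{\psi,\bfs}$ as exactly the cut operator $(\calD_\psi)^\cut_P$ of Theorem~\ref{T:splitting spf}, for a suitably chosen local boundary projection $Q$ on $\Sigma$, and then to invoke that theorem together with Remark~\ref{R:splitting spf}.

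First I would pin down $Q$ by the orientation bookkeeping on the two copies of $\Sigma$. Write $\chi_i:=\bfs_{|\Sigma_i}\rmc(\nu^*_{|\Sigma_i})\theta$ for the boundary chirality on $\Sigma_i$, $i=1,2$. After cutting, the inward unit normals satisfy $\rmc(\nu^*_{|\Sigma_2})=-\rmc(\nu^*_{|\Sigma_1})$ under the identification $\Sigma_1\cong\Sigma\cong\Sigma_2$, while $\theta$ and, by the hypothesis $\bfs_{|\Sigma_1}=\bfs_{|\Sigma_2}$, also $\bfs$ agree on the two copies; hence $\chi_2=-\chi_1$. Setting $Q:=\tfrac{1}{2}(\id-\chi_1)$, this is a self-adjoint elliptic \emph{local} boundary condition for $\calD^\cut_\psi$ on $\Sigma_1$, that is, $Q\in{\rm Gr}(\calA_\Sigma)$, by \cite[Example~4.20]{BaerBallmann16} (as recalled in Section~\ref{SS:Callias}); and the local boundary projection defining $\calD^\cut_{\psi,\bfs}$ on $\p M^\cut$ is
\[
\tfrac{1}{2}(\id-\chi_1)\oplus\tfrac{1}{2}(\id-\chi_2)=\begin{pmatrix}Q&0\\0&\id-Q\end{pmatrix},
\]
which is precisely the projection $P$ of \eqref{E:s.a-b.c} attached to this $Q$. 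The role of the hypothesis $\bfs_{|\Sigma_1}=\bfs_{|\Sigma_2}$ is exactly to force the second diagonal block to be $\id-Q$, so that $P$ has the block form required by Theorem~\ref{T:splitting spf}.

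It then remains to check the hypotheses on $\rho$, which is immediate: since $\rho\in C^\infty(M,U(k))$ acts as $\id_S\otimes\rho$ on $S\otimes\CC^k$ while $\chi_1$ acts only on the $S$-factor, $\rho$ commutes with $\chi_1$ and hence with $Q$, and $\rho$ is locally constant at infinity by assumption; so by Remark~\ref{R:splitting spf} the hypothesis of Theorem~\ref{T:splitting spf} is satisfied (for $M$, $\Sigma$, $Q$, $\rho$). Applying that theorem and using $(\calD_\psi)^\cut_P=\calD^\cut_{\psi,\bfs}$ gives $\spf(\calD_\psi,\rho)=\spf(\calD^\cut_{\psi,\bfs},\rho)$, as claimed. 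I do not expect a genuine obstacle here --- the analytic substance is entirely in Theorem~\ref{T:splitting spf} --- and the only point that needs a little attention is the sign bookkeeping that turns the chirality on $\Sigma_2$ into $-\chi_1$, and correspondingly the boundary projection there into $\id-Q$.
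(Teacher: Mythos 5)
Your proposal is correct and follows the paper's own route exactly: the sign flip $\rmc(\nu^*_{|\Sigma_2})=-\rmc(\nu^*_{|\Sigma_1})$ together with $\bfs_{|\Sigma_1}=\bfs_{|\Sigma_2}$ makes the chiral projection on $\p M^\cut$ take the block form $Q\oplus(\id-Q)$ of \eqref{E:s.a-b.c}, after which Theorem~\ref{T:splitting spf} and Remark~\ref{R:splitting spf} apply. Nothing is missing.
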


The following spectral estimate for $\calD_{\psi,\bfs}(r)$ from \cite[Section~4]{CeccZeid24GT} is a consequence of \eqref{E:Dpsi(r) square-1}, \eqref{E:Penrose} and Green's formula.

\begin{proposition}[{\cite[Theorem~4.3]{CeccZeid24GT}}]\label{P:sp est Callias}
Let $\calD_{\psi,\bfs}(r)$, $r\in[0,1]$, be the Callias operator \eqref{E:Dpsi(r)} associated to an admissible potential $\psi$, a choice of signs $\bfs:\pM\to\{\pm\}$, and a function $\rho\in C^\infty(M,U(k))$ on a compact Riemannian $n$-manifold $M$. Then for any $u\in\dom(\calD_{\psi,\bfs}(r))=H^1_{\theta,\bfs}(M,S)$, there holds the estimate
\[
\begin{aligned}
\int_M|\calD_\psi(r)u|^2 \D V_M &=\frac{n}{n-1} \int_M(|\calP(r)u|^2+\langle u,\calR(r)u\rangle)\D V_M \\
  &\quad +\int_M\langle u,(\psi^2 + \rmc(\D\psi)\theta)u\rangle \D V_M+\int_{\pM}\left(\frac{1}{2}n H+\bfs\psi \right)|\tau(u)|^2 \D V_{\pM} \\
  &\ge\frac{n}{n-1}\int_M\langle u,\calR(r)u\rangle\ \D V_M+\int_M \left(\psi^2-|\D\psi|\right)|u|^2 \D V_M \\
  &\quad +\int_{\pM}\left(\frac{1}{2}n H+\bfs\psi \right)|\tau(u)|^2 dV_{\pM},
\end{aligned}
\]
where $\calR(r)$ is the curvature endomorphism \eqref{E:BSLW endo} in the BSLW formula of $D^2(r)$, $H$ is the mean curvature of $\pM$, and $\tau(u):=u_{|\pM}$.

Moreover, the equality holds if and only if
\[
\begin{aligned}
\calP_e(r)u=\nabla_e(r)u+\frac{1}{n}\rmc(e^*)D(r)u &=0,\quad\text{for all }e\in TM, \\
\left(\rmc(\D\psi)\theta+|\D\psi|\right)u &=0.
\end{aligned}
\]
\end{proposition}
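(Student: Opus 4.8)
The plan is to derive the identity by combining the Weitzenböck-type formula \eqref{E:Dpsi(r) square-1} with the Penrose identity \eqref{E:Penrose} and Green's formula, carefully tracking the boundary term that arises from the chiral local boundary condition. First I would write $\int_M |\calD_\psi(r)u|^2\,\D V_M = \int_M \langle u, \calD_\psi^2(r)u\rangle\,\D V_M + (\text{boundary term})$; the boundary term comes from integrating by parts in the first-order operator $D(r)$ and equals a pairing on $\pM$ involving $\rmc(\nu^*)$. Using the defining relation $\chi = \bfs\,\rmc(\nu^*)\theta$ of the boundary chirality together with $\chi(u_{|\pM}) = u_{|\pM}$, one rewrites $\langle u, \rmc(\nu^*)\nabla_\nu u\rangle$-type quantities on the boundary in terms of $\bfs\psi|\tau(u)|^2$ and the adapted operator $\calA$ (or equivalently the boundary Dirac operator $A$ from Remark~\ref{R:adatped op}, which carries the mean curvature term $\tfrac{n-1}{2}H$). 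This is exactly the computation done in \cite[Section~4]{CeccZeid24GT} for the index case, and it goes through verbatim for each fixed $r$ since $D(r)$ is again a genuine Dirac operator on $S\otimes\CC^k$ by Remark~\ref{R:conjugate Dirac}; the potential $\psi\theta$ contributes $\langle u,(\psi^2+\rmc(\D\psi)\theta)u\rangle$ in the bulk and, after the boundary manipulation, the term $\bfs\psi|\tau(u)|^2$ on $\pM$.

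Next I would substitute the Bochner decomposition $D^2(r) = (\nabla^*\nabla)(r) + \calR(r)$ and then replace $\int_M |\nabla(r) u|^2$ using the Penrose identity \eqref{E:Penrose} applied to the Dirac operator $D(r)$: namely $|\nabla(r) u|^2 = |\calP(r) u|^2 + \tfrac{1}{n}|D(r) u|^2$. Integrating and using Green's formula once more to convert $\int_M |\nabla(r)u|^2$ into $\int_M\langle u,(\nabla^*\nabla)(r)u\rangle$ plus its boundary contribution, one solves for $\int_M |D(r)u|^2$ and arrives at the factor $\tfrac{n}{n-1}$ in front of $\int_M(|\calP(r)u|^2 + \langle u,\calR(r)u\rangle)$. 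Bookkeeping the boundary terms from both integrations by parts — the one from $\nabla^*\nabla$ and the one from $\calD_\psi^2(r)$ — and using the second fundamental form to express the difference in terms of $H$, yields the stated coefficient $\tfrac{1}{2}nH$ on the boundary integral. The inequality then follows by dropping $|\calP(r)u|^2 \ge 0$ and estimating $\langle u, \rmc(\D\psi)\theta u\rangle \ge -|\D\psi|\,|u|^2$ pointwise (since $\rmc(\D\psi)\theta$ is self-adjoint with $(\rmc(\D\psi)\theta)^2 = |\D\psi|^2\cdot\id$, so its spectrum lies in $\{\pm|\D\psi|\}$).

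For the equality discussion, tracing back the two places where inequalities were introduced: equality in $|\calP(r)u|^2\ge0$ forces $\calP_e(r)u = \nabla_e(r)u + \tfrac{1}{n}\rmc(e^*)D(r)u = 0$ for all $e$, and equality in $\langle u,\rmc(\D\psi)\theta u\rangle \ge -|\D\psi||u|^2$ forces $u$ to lie pointwise in the $(-|\D\psi|)$-eigenspace of $\rmc(\D\psi)\theta$, i.e. $(\rmc(\D\psi)\theta + |\D\psi|)u = 0$. The main obstacle — really the only subtle point — is the careful sign- and convention-tracking in the boundary term: one must make sure the mean curvature convention (the paper's normalization that $\rmS^{n-1}\subset\RR^n$ has $H=1$), the sign of $\bfs$, the orientation of the inward normal $\nu$, and the anticommutation relations $\rmc(\nu^*)\theta = -\theta\rmc(\nu^*)$, $\chi\calA = -\calA\chi$ all conspire to give precisely $\tfrac{1}{2}nH + \bfs\psi$ with the correct sign, rather than its negative. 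Since this is identical to the even-dimensional computation in \cite[Theorem~4.3]{CeccZeid24GT} — the family parameter $r$ being inert because $D(r)$ is a Dirac operator for a modified connection that still anticommutes with $\theta$ and has the same principal symbol — I would simply invoke that verification with the substitution $D\rightsquigarrow D(r)$, $\nabla\rightsquigarrow\nabla(r)$, $\calR\rightsquigarrow\calR(r)$, and note that $\psi$, $\theta$, $\chi$, $H$ are all unchanged under the homotopy.
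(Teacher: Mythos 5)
Your proposal is correct and follows exactly the route the paper indicates: the paper gives no independent proof but states that the estimate is a consequence of \eqref{E:Dpsi(r) square-1}, \eqref{E:Penrose} and Green's formula, deferring the boundary-term bookkeeping to \cite[Theorem~4.3]{CeccZeid24GT} and noting (as you do via Remark~\ref{R:conjugate Dirac}) that $D(r)$ is again a genuine Dirac operator so the even-dimensional computation carries over verbatim. Your identification of the equality conditions and of the pointwise bound $\langle u,\rmc(\D\psi)\theta u\rangle\ge-|\D\psi|\,|u|^2$ via the involution property of $\rmc(\D\psi)\theta$ is also the intended argument.
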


\begin{remark}\label{R:sp est Callias}
We can also consider the case that $M$ is a complete (not necessarily compact) Riemannian manifold without boundary and $\rho$ is locally constant at infinity. In this case, $\calD_\psi(r)u$ is $L^2$-integrable for any $u\in\dom(\calD_\psi(r))=H^1_D(M,S)$ and it holds that
\[
\int_M|\calD_\psi(r)u|^2 \D V_M=\int_M\Big(\frac{n}{n-1}|\calP(r)u|^2+\langle u,\Theta(r)u\rangle\Big)\D V_M,
\]
with $\Theta(r)$ a bundle endomorphism given by
\[
\begin{aligned}
\Theta(r):=&\,\frac{n}{n-1}\calR(r)+\psi^2+\rmc(\D\psi)\theta \\
\ge&\,\frac{n}{n-1}\calR(r)+\psi^2-|\D\psi|.
\end{aligned}
\]
If the bundle endomorphism $\frac{n}{n-1}\calR(r)+\psi^2-|\D\psi|$ is a non-negative operator, then $\calP(r)u$ is $L^2$-integrable, so that $\int_M\langle u,\Theta(r)u\rangle\D V_M$ converges absolutely. Therefore, we still have the following similar estimate
\[
\begin{aligned}
\int_M|\calD_\psi(r)u|^2 \D V_M &=\frac{n}{n-1} \int_M|\calP(r)u|^2\D V_M+\int_M\langle u,\Theta(r)u\rangle\D V_M \\
  &\ge\int_M\Big\langle u,\Big(\frac{n}{n-1}\calR(r)+\psi^2-|\D\psi|\Big)u\Big\rangle \D V_M.
\end{aligned}
\]
Compare \cite[Section~2]{GromovLawson83}. The conditions for the equality to hold are the same as Proposition~\ref{P:sp est Callias}.
\end{remark}

\subsection{Callias operators from a Gromov--Lawson pair}\label{SS:Callias GL pair}

In this subsection, we study a special case of relative Dirac bundle and Callias operators on an odd-dimensional spin manifold.

Let $(M,g)$ be an odd-dimensional compact Riemannian spin manifold with boundary, and let $\slaS$ be the complex spinor bundle over $M$. If $E,F\to M$ are two Hermitian bundles with Hermitian connections, then the twisted bundle
\begin{equation}\label{E:twist bundle}
S:=\slaS\otimes(E\oplus F)=(\slaS\otimes E)\oplus(\slaS\otimes F)
\end{equation}
is a Dirac bundle over $M$, with the connection being the usual tensor product connection and the Clifford multiplication given by
\[
\rmc(\xi)=\left(
\begin{matrix}
\rmc'(\xi)\otimes\id_E & 0 \\
0 & -\rmc'(\xi)\otimes\id_F
\end{matrix}\right),\quad\xi\in T^*M,
\]
where $\rmc'(\xi)$ is the Clifford multiplication on $\slaS$. The associated Dirac operator becomes
\begin{equation}\label{E:twist Dirac}
D=\left(
\begin{matrix}
\slaD_E & 0 \\
0 & -\slaD_F
\end{matrix}\right),
\end{equation}
where $\slaD_E,\slaD_F$ are the twisted spin Dirac operators. We call the operator $D$ twisted by $E\oplus F^{\rm op}$. For the operator $D$, the curvature endomorphism \eqref{E:BSLW endo} in the BSLW formula is
\[
\calR=\frac{1}{4}\scal_g+\calR^{E\oplus F},
\]
where $\calR^{E\oplus F}=\calR^E\oplus\calR^F$, with $\calR^E$ (resp. $\calR^F$) being the curvature endomorphism of the curvature tensor $(\nabla^E)^2$ (resp. $(\nabla^F)^2$).

Now we make the following assumption (cf. \cite[(2-15)]{CeccZeid24GT}) on $E,F$.

\begin{assumption}\label{A:GL pair}
$(E,F)$ is a \emph{Gromov--Lawson pair} with support $K$, that is, there exist a compact subset $K\Subset M^\circ$ and a parallel unitary bundle isomorphism $\frakt:E_{|M\setminus K}\to F_{|M\setminus K}$ that extends to a smooth bundle endomorphism on a neighborhood of $\overline{M\setminus K}$.
\end{assumption}

Under this assumption, $S$ becomes a relative Dirac bundle with involution
\begin{equation}\label{E:GL pair rel Dirac}
\theta=\left(
\begin{matrix}
0 & \id\otimes\frakt^* \\
\id\otimes\frakt & 0
\end{matrix}\right):S_{|M\setminus K}\to S_{|M\setminus K}.
\end{equation}
So we can talk about the Callias operator $\calD_\psi$ \eqref{E:Callias} in this case.

One can use a Gromov--Lawson pair $(E,F)$ over $M$ to construct a Hermitian bundle over the closed double $\double M=M\cup_{\pM}M^-$, where $M^-$ is $M$ with opposite orientation. To be precise, using the bundle isomorphism $\frakt$, we define a bundle $V(E,F)$ over $\double M$ such that it coincides with $E$ over $M$ and with $F$ over $M^-$ outside a small collar neighborhood of $\pM$. Let $\slaD_{V(E,F)}$ denote the spin Dirac operator over the spin manifold $\double M$ twisted by $V(E,F)$.

Let $\rho=\rho^+\oplus\rho^-\in C^\infty(M,U(k)\oplus U(k))$. Suppose in a collar neighborhood of $\pM$, $\rho^+=\rho^-$ is locally constant in the normal direction. Then $\rho^+$ and $\rho^-$ can be glued smoothly to yield $\tilde{\rho}\in C^\infty(\double M,U(k))$, which means that $\tilde{\rho}$ is an extension of $\rho^+$ to $\double M$ such that $\tilde{\rho}_{|M^-}$ coincides with $\rho^-$. Consider the Callias operator $\calD_{\psi,1}$ on $M$ with the boundary condition given by choosing the sign $\bfs=1$.

\begin{center}
\tikzset{every picture/.style={line width=0.8pt}} 
\begin{tikzpicture}[x=0.75pt,y=0.75pt,yscale=-1,xscale=1]

\draw   (113,128.91) .. controls (113,120.03) and (126.61,112.82) .. (143.39,112.82) .. controls (160.18,112.82) and (173.79,120.03) .. (173.79,128.91) .. controls (173.79,137.8) and (160.18,145) .. (143.39,145) .. controls (126.61,145) and (113,137.8) .. (113,128.91) -- cycle ;
\draw   (113,171.09) .. controls (113,162.2) and (126.61,155) .. (143.39,155) .. controls (160.18,155) and (173.79,162.2) .. (173.79,171.09) .. controls (173.79,179.97) and (160.18,187.18) .. (143.39,187.18) .. controls (126.61,187.18) and (113,179.97) .. (113,171.09) -- cycle ;
\draw    (113,128.91) .. controls (113.47,68.84) and (173.33,44.24) .. (198,54.24) .. controls (222.67,64.24) and (212.93,59.44) .. (233,54.24) .. controls (253.07,49.04) and (267.87,99.8) .. (268,130) ;
\draw   (228.92,130) .. controls (228.92,124.48) and (237.67,120) .. (248.46,120) .. controls (259.25,120) and (268,124.48) .. (268,130) .. controls (268,135.52) and (259.25,140) .. (248.46,140) .. controls (237.67,140) and (228.92,135.52) .. (228.92,130) -- cycle ;
\draw    (113,171.09) .. controls (113.07,231.09) and (162.93,254.4) .. (193,245) .. controls (223.07,235.6) and (213.33,240) .. (233,245) .. controls (252.67,250) and (267.87,197.21) .. (268,172.01) ;
\draw   (228.92,172.01) .. controls (228.92,166.49) and (237.67,162.01) .. (248.46,162.01) .. controls (259.25,162.01) and (268,166.49) .. (268,172.01) .. controls (268,177.54) and (259.25,182.01) .. (248.46,182.01) .. controls (237.67,182.01) and (228.92,177.54) .. (228.92,172.01) -- cycle ;
\draw    (173.79,128.91) .. controls (173.47,94.84) and (233.07,100.6) .. (228.92,130) ;
\draw    (173.79,171.09) .. controls (173.47,204.6) and (233.07,202.41) .. (228.92,172.01) ;
\draw    (158,84.24) .. controls (168.27,100.04) and (193.07,94.44) .. (198,79.24) ;
\draw    (158,215) .. controls (168.27,200.6) and (192.67,204.6) .. (198,220) ;
\draw    (163,89.24) .. controls (168.27,79.24) and (182.67,69.64) .. (195,84.24) ;
\draw    (163,210) .. controls (168.27,220.2) and (183.47,230.2) .. (195,215) ;
\draw    (365,150.97) .. controls (365.47,90.9) and (425.33,66.3) .. (450,76.3) .. controls (474.67,86.3) and (464.93,81.5) .. (485,76.3) .. controls (505.07,71.1) and (519.87,121.86) .. (520,152.06) ;
\draw    (365,149.07) .. controls (365.07,209.07) and (414.93,232.39) .. (445,222.99) .. controls (475.07,213.59) and (465.33,217.99) .. (485,222.99) .. controls (504.67,227.99) and (519.87,175.2) .. (520,150) ;
\draw    (425.79,150.97) .. controls (425.47,116.9) and (485.07,122.66) .. (480.92,152.06) ;
\draw    (425.79,149.07) .. controls (425.47,182.59) and (485.07,180.4) .. (480.92,150) ;
\draw    (410,106.3) .. controls (420.27,122.1) and (445.07,116.5) .. (450,101.3) ;
\draw    (410,192.99) .. controls (420.27,178.59) and (444.67,182.59) .. (450,197.99) ;
\draw    (415,111.3) .. controls (420.27,101.3) and (434.67,91.7) .. (447,106.3) ;
\draw    (415,187.99) .. controls (420.27,198.19) and (435.47,208.19) .. (447,192.99) ;
\draw  [draw opacity=0] (425.79,149.07) .. controls (425.79,149.07) and (425.79,149.07) .. (425.79,149.07) .. controls (425.79,149.07) and (425.79,149.07) .. (425.79,149.07) .. controls (425.79,158.15) and (412.33,165.51) .. (395.73,165.51) .. controls (379.12,165.51) and (365.66,158.15) .. (365.66,149.07) -- (395.73,149.07) -- cycle ; \draw   (425.79,149.07) .. controls (425.79,149.07) and (425.79,149.07) .. (425.79,149.07) .. controls (425.79,149.07) and (425.79,149.07) .. (425.79,149.07) .. controls (425.79,158.15) and (412.33,165.51) .. (395.73,165.51) .. controls (379.12,165.51) and (365.66,158.15) .. (365.66,149.07) ;  
\draw  [draw opacity=0][dash pattern={on 2.25pt off 2.25pt on 2.25pt off 2.25pt}] (365.66,149.07) .. controls (365.66,149.07) and (365.66,149.07) .. (365.66,149.07) .. controls (365.66,149.07) and (365.66,149.07) .. (365.66,149.07) .. controls (365.66,139.99) and (379.12,132.63) .. (395.73,132.63) .. controls (412.33,132.63) and (425.79,139.99) .. (425.79,149.07) -- (395.73,149.07) -- cycle ; \draw  [dash pattern={on 2.25pt off 2.25pt on 2.25pt off 2.25pt}] (365.66,149.07) .. controls (365.66,149.07) and (365.66,149.07) .. (365.66,149.07) .. controls (365.66,149.07) and (365.66,149.07) .. (365.66,149.07) .. controls (365.66,139.99) and (379.12,132.63) .. (395.73,132.63) .. controls (412.33,132.63) and (425.79,139.99) .. (425.79,149.07) ;  
\draw  [draw opacity=0] (520,150) .. controls (520,150) and (520,150) .. (520,150) .. controls (520,150) and (520,150) .. (520,150) .. controls (520,156.35) and (511.25,161.5) .. (500.46,161.5) .. controls (489.67,161.5) and (480.92,156.35) .. (480.92,150) -- (500.46,150) -- cycle ; \draw   (520,150) .. controls (520,150) and (520,150) .. (520,150) .. controls (520,150) and (520,150) .. (520,150) .. controls (520,156.35) and (511.25,161.5) .. (500.46,161.5) .. controls (489.67,161.5) and (480.92,156.35) .. (480.92,150) ;  
\draw  [draw opacity=0][dash pattern={on 2.25pt off 2.25pt on 2.25pt off 2.25pt}] (480.92,150) .. controls (480.92,150) and (480.92,150) .. (480.92,150) .. controls (480.92,150) and (480.92,150) .. (480.92,150) .. controls (480.92,143.65) and (489.67,138.5) .. (500.46,138.5) .. controls (511.25,138.5) and (520,143.65) .. (520,150) -- (500.46,150) -- cycle ; \draw  [dash pattern={on 2.25pt off 2.25pt on 2.25pt off 2.25pt}] (480.92,150) .. controls (480.92,150) and (480.92,150) .. (480.92,150) .. controls (480.92,150) and (480.92,150) .. (480.92,150) .. controls (480.92,143.65) and (489.67,138.5) .. (500.46,138.5) .. controls (511.25,138.5) and (520,143.65) .. (520,150) ;  
\draw    (295,150) -- (338,150) ;
\draw [shift={(340,150)}, rotate = 180] [color={rgb, 255:red, 0; green, 0; blue, 0 }  ][line width=0.75]    (10.93,-3.29) .. controls (6.95,-1.4) and (3.31,-0.3) .. (0,0) .. controls (3.31,0.3) and (6.95,1.4) .. (10.93,3.29)   ;
\draw  [dash pattern={on 2.25pt off 2.25pt on 2.25pt off 2.25pt}]  (138,72) .. controls (139.47,83.2) and (134.67,88.4) .. (153,95) .. controls (171.33,101.6) and (161.47,117.2) .. (183,110) ;
\draw  [dash pattern={on 2.25pt off 2.25pt on 2.25pt off 2.25pt}]  (223,58) .. controls (229.07,68.8) and (221.07,77.2) .. (233,85) .. controls (244.93,92.8) and (237.07,101.2) .. (218,110) ;

\draw (191,32.4) node [anchor=north west][inner sep=0.75pt]  {$M$};
\draw (184,252.4) node [anchor=north west][inner sep=0.75pt]  {$M^{-}$};
\draw (434,252.4) node [anchor=north west][inner sep=0.75pt]  {$\mathsf{D} M$};
\draw (204,72.4) node [anchor=north west][inner sep=0.75pt]    {$K$};
\draw (271,102.4) node [anchor=north west][inner sep=0.75pt]    {$\rho ^{+} \oplus \rho ^{-}$};
\draw (271,202.4) node [anchor=north west][inner sep=0.75pt]    {$\rho ^{-} \oplus \rho ^{-}$};
\draw (525,150.4) node [anchor=north west][inner sep=0.75pt]    {$\tilde{\rho } \oplus \tilde{\rho } '$};
\draw (270,82.4) node [anchor=north west][inner sep=0.75pt]    {$E,F$};
\draw (271,182.4) node [anchor=north west][inner sep=0.75pt]    {$F,F$};
\draw (523,124.4) node [anchor=north west][inner sep=0.75pt]    {$V( E,F) ,\tilde{F}$};
\end{tikzpicture}

\textbf{Fig. 1.} Construction on the closed double.
\end{center}

\begin{theorem}\label{T:GL pair spf}
Under the above setting, for any admissible potential $\psi$,
\[
\spf(\calD_{\psi,1},\rho)=\spf(\slaD_{V(E,F)},\tilde{\rho}).
\]
\end{theorem}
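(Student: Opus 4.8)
The plan is to carry out, at the level of spectral flow, the cutting-and-gluing argument that underlies the even-dimensional index identity \cite[Corollary~3.9]{CeccZeid24GT}: one passes to the closed double $\double M$ and invokes the splitting formula of Corollary~\ref{C:spf Callias}. First I would equip $\double M=M\cup_{\pM}M^-$ with the relative Dirac bundle $\slaS_{\double M}\otimes(V(E,F)\oplus\tilF)$ built from the Gromov--Lawson pair $(V(E,F),\tilF)$ with support $K\Subset M^\circ$ (here $\tilF$ is the double of $F$, i.e.\ it restricts to $F$ on each half, glued over the collar of $\pM$ by the identity, and the parallel unitary isomorphism is $\frakt$ over $M\setminus K$ and the identity over $M^-$; see Fig.~1). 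As in \cite[Section~3]{CeccZeid24GT}, its Dirac operator is $\tilD=\slaD_{V(E,F)}\oplus(-\slaD_{\tilF})$, the involution $\theta$ has the off-diagonal form \eqref{E:GL pair rel Dirac}, and by construction the restriction of this relative Dirac bundle to $M$ is the one carrying $\calD_{\psi,1}$, whereas its restriction to $M^-$ corresponds to the pair $(F,F)$ and thus has \emph{empty} support. I would then extend $\psi$ to a Lipschitz function $\tilde\psi$ on $\double M$ that vanishes on $K$ and equals $\psi$ on $M$, and glue $\rho=\rho^+\oplus\rho^-$ to $\tilde\sigma=\tilde\rho\oplus\tilde\rho'\in C^\infty(\double M,U(k)\oplus U(k))$, with $\tilde\rho$ obtained by gluing $\rho^+$ to $\rho^-$ using that $\rho^+=\rho^-$ is locally constant in the normal direction near $\pM$, and $\tilde\rho'$ the double of $\rho^-$.

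Next I would apply Corollary~\ref{C:spf Callias} to the closed manifold $\double M$, cut along $\Sigma=\pM$, with the constant sign $\bfs\equiv1$ on both boundary copies. Since the $M$-piece of the cut carries exactly $\calD_{\psi,1}$ together with the restriction $\rho$ of $\tilde\sigma$, while the $M^-$-piece carries a Callias operator for a relative Dirac bundle of empty support with the constant sign $1$, this gives
\[
\spf(\calD_{\tilde\psi},\tilde\sigma)=\spf(\calD_{\psi,1},\rho)+\spf\big(\calD^{M^-}_{\psi^-,1},\,\rho^-\oplus\rho^-\big),
\]
and the last term vanishes by Lemma~\ref{L:spf vanishing-2}; hence $\spf(\calD_{\psi,1},\rho)=\spf(\calD_{\tilde\psi},\tilde\sigma)$. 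On the \emph{closed} manifold $\double M$ the zeroth-order term $\tilde\psi\theta$ is a compact perturbation (cf.\ the proof of Lemma~\ref{L:spf homotop inv}), so $\{\tilD+t\tilde\psi\theta\}_{t\in[0,1]}$ is a graph-continuous path of self-adjoint Fredholm operators along which $\tilde\sigma$ satisfies Assumption~\ref{A:spf}; by homotopy invariance of the spectral flow \cite[Proposition~2.1]{GorokhovskyLesch15} we obtain $\spf(\calD_{\tilde\psi},\tilde\sigma)=\spf(\tilD,\tilde\sigma)$, and additivity of the spectral flow over the direct-sum splitting yields
\[
\spf(\tilD,\tilde\sigma)=\spf(\slaD_{V(E,F)},\tilde\rho)+\spf(-\slaD_{\tilF},\tilde\rho').
\]

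The decisive point --- and the step I expect to be the main obstacle --- is to show that the leftover term $\spf(-\slaD_{\tilF},\tilde\rho')$ vanishes. For this I would exploit the orientation-reversing doubling involution $\iota$ of $\double M$ (interchanging the two halves, fixing $\pM$): the data $\tilF$ and $\tilde\rho'$ are $\iota$-equivariant, while in odd dimensions $\iota$ conjugates $\slaD_{\tilF}$ to $-\slaD_{\tilF}$ (the complex spinor bundle of a reversed-orientation odd-dimensional manifold carries the Clifford action with the opposite sign). Consequently $\iota$ conjugates the path $r\mapsto\slaD_{\tilF}(r)=(1-r)\slaD_{\tilF}+r(\tilde\rho')^{-1}\slaD_{\tilF}\tilde\rho'$ to $r\mapsto-\slaD_{\tilF}(r)$, and since conjugation by a fixed unitary preserves spectral flow, $\spf(\slaD_{\tilF},\tilde\rho')=\spf(-\slaD_{\tilF},\tilde\rho')$. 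On the other hand, a direct inspection of Definition~\ref{D:spf} shows that $\spf(f)+\spf(-f)=\dim\ker f(1)-\dim\ker f(0)$ for any graph-continuous path $f$ of self-adjoint Fredholm operators; applied to $f(r)=\slaD_{\tilF}(r)$, whose endpoints are unitarily conjugate, the right-hand side is $0$. The two displayed relations together force $\spf(\slaD_{\tilF},\tilde\rho')=\spf(-\slaD_{\tilF},\tilde\rho')=0$, and therefore
\[
\spf(\calD_{\psi,1},\rho)=\spf(\calD_{\tilde\psi},\tilde\sigma)=\spf(\tilD,\tilde\sigma)=\spf(\slaD_{V(E,F)},\tilde\rho),
\]
which is the claim. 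The remaining points are routine: that $\tilde\psi$ and $\tilde\sigma$ glue across the collar (Lipschitz, resp.\ smoothly), that the $M$-side of the cut genuinely reproduces $\calD_{\psi,1}$ with the twist $\rho$, and that $\iota$ lifts to the complex spinor bundle with the stated sign on $\slaD$.
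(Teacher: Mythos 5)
Your proposal is correct and follows the same overall route as the paper: double the manifold, identify the doubled Dirac operator's spectral flow with $\spf(\slaD_{V(E,F)},\tilde{\rho})$ modulo the $\tilF$-term, then cut along $\pM$ via Corollary~\ref{C:spf Callias}, kill the $M^-$-contribution with Lemma~\ref{L:spf vanishing-2}, and pass from $\psi$ to $0$ by Lemma~\ref{L:spf homotop inv}. The one step you handle differently is the vanishing of $\spf(\slaD_{\tilF},\tilde{\rho}')$: the paper invokes Getzler's integral formula for the spectral flow and observes that the local density integrates to zero by the reflection symmetry of $(\tilF,\tilde{\rho}')$, whereas you conjugate the whole path by a unitary lift of the doubling involution (which flips the sign of $\slaD$ in odd dimensions) and combine this with the elementary identity $\spf(f)+\spf(-f)=\dim\ker f(1)-\dim\ker f(0)$, whose right-hand side vanishes because the endpoints are unitarily conjugate. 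Both arguments rest on the same orientation-reversing symmetry; yours avoids the heat-kernel formula at the cost of having to verify that the involution actually lifts to the complex spinor bundle of $\double M$ anticommuting with Clifford multiplication (so that it intertwines $\slaD_{\tilF}$ with $-\slaD_{\tilF}$ and commutes with $\tilde{\rho}'$) --- a point you defer as routine but which is the only place where your argument needs an input beyond what is already in the paper's toolkit.
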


\begin{proof}
Denote $\tilE=V(E,F)$ and let $\tilF$ be the extension of $F$ to $\double M$ such that $\tilF_{|M^-}=F$. Replacing $E$ and $F$ by $\tilE$ and $\tilF$ respectively in \eqref{E:twist bundle} and \eqref{E:twist Dirac}, one gets a twisted bundle $\tilS$ and a Dirac operator $\tilD$ twisted by $\tilE\oplus\tilF^{\rm op}$ over $\double M$. By the splitting of $\tilS$ and $\tilD$, we have
\[
\spf(\tilD,\tilde{\rho}\oplus\tilde{\rho}')=\spf(\slaD_{V(E,F)},\tilde{\rho})-\spf(\slaD_{\tilF},\tilde{\rho}'),
\]
where $\tilde{\rho}'\in C^\infty(\double M,U(k))$ is the smooth gluing of $\rho^-$ and $\rho^-$.
Using \cite[Corollary~2.7]{Getzler93}, the last term can be computed as
\[
\spf(\slaD_{\tilF},\tilde{\rho}')=\sqrt{\frac{\varepsilon}{\pi}}\int_0^1\Tr\big((\tilde{\rho}')^{-1}[\slaD_{\tilF},\tilde{\rho}']e^{-\varepsilon\slaD^2_{\tilF}(r)}\big)\D r,
\]
where $\slaD_{\tilF}(r)=(1-r)\slaD_{\tilF}+r(\tilde{\rho}')^{-1}\slaD_{\tilF}\tilde{\rho}'$. Note that the trace can be written as an integration over $\double M$. It follows from the symmetry of $\tilF$ and $\tilde{\rho}'$ that $\spf(\slaD_{\tilF},\tilde{\rho}')$ vanishes. Hence $\spf(\tilD,\tilde{\rho}\oplus\tilde{\rho}')=\spf(\slaD_{V(E,F)},\tilde{\rho})$.

We now compute $\spf(\tilD,\tilde{\rho}\oplus\tilde{\rho}')$. Cutting $\double M$ along $\pM$, by Corollary~\ref{C:spf Callias},
\[
\spf(\tilD,\tilde{\rho}\oplus\tilde{\rho}')=\spf(\calD_{0,1},\rho)+\spf(\calD^{F,F}_{0,1},\rho')=\spf(\calD_{0,1},\rho),
\]
where $\rho'=\rho^-\oplus\rho^-$, and $\calD^{F,F}_{0,1}$ is the Dirac operator on $M^-$ twisted by $F\oplus F^{\rm op}$, whose spectral flow thus vanishes because of Lemma~\ref{L:spf vanishing-2}. Lastly, we have $\spf(\calD_{0,1},\rho)=\spf(\calD_{\psi,1},\rho)$ due to Lemma~\ref{L:spf homotop inv}. The theorem is proved.
\end{proof}

\section{The odd-dimensional long neck problem}\label{S:long neck}

In this section, we use the spectral flow proof of Llarull's theorem by Li--Su--Wang \cite{LiSuWang24} combined with Cecchini--Zeidler's argument \cite{CeccZeid24GT} to derive a scalar-mean curvature comparison theorem for the long neck problem on odd-dimensional spin manifolds.

\subsection{Construction of a Gromov--Lawson pair}\label{SS:GL pair}

We first recall the construction of \cite{LiSuWang24} in proving Llarull's theorem in odd dimensions. Let $(X,g)$ be a compact Riemannian spin manifold of dimension $n$ ($n\ge3$ odd), and $\Theta:X\to\rmS^n$ be a smooth map. There exist a trivial bundle $E_0$ over $\rmS^n$ and a smooth function $\bar{\rho}$ on $\rmS^n$ with values in a unitary group such that
\[
\nabla(r):=\D+r\bar{\rho}^{-1}[\D,\bar{\rho}],\quad 0\le r\le1
\]
defines a family of Hermitian connections on $E_0$. The pull-back $\Theta^*E_0$ is a Hermitian vector bundle over $X$ with a family of Hermitian connections induced by $\Theta^*\bar{\rho}$. Let $\slaD_{\Theta^*E_0}(r)$ be the spin Dirac operator on $X$ twisted by $\Theta^*E_0$ with connection
\[
\nabla^{\slaS_X\otimes\Theta^*E_0}(r)=\nabla^{\slaS_X}\otimes\id+\id\otimes\Theta^*\nabla(r),\quad 0\le r\le1,
\]
where $\slaS_X$ is the complex spinor bundle over $X$. In this case, the curvature endomorphism \eqref{E:BSLW endo} in the BSLW formula of $\slaD^2_{\Theta^*E_0}(r)$ is given by
\[
\calR(r)=\frac{1}{4}\scal_g+\calR^{\Theta^*E_0}(r),
\]
where $\calR^{\Theta^*E_0}(r)$ is the curvature endomorphism of the curvature tensor $(\Theta^*\nabla(r))^2$. When $X$ is closed, one can consider the spectral flow of $\slaD_{\Theta^*E_0}(r)$, $r\in[0,1]$. In view of Remark~\ref{R:conjugate Dirac}, this spectral flow is just $\spf(\slaD_{\Theta^*E_0}(0),\Theta^*\bar{\rho})$. It is proved in \cite[Sections~3.2 and 3.3]{LiSuWang24} that:
\begin{enumerate}
\item For each $r\in[0,1]$ and each $x\in X$, one has
\[
\calR_x^{\Theta^*E_0}(r)\ge-a(x)\cdot\frac{1}{4}n(n-1),
\]
where $a(x)$ is the area contraction constant of $\Theta$ at $x$. (For a smooth map $\Theta:X\to Y$ between two Riemannian manifolds, the \emph{area contraction constant} at $x\in X$ is defined the be the norm of the induced map $\Theta^*:\wedge^2T_x^*X\to\wedge^2T_{\Theta(x)}^*Y$ on 2-forms.) In addition, the inequality is strict unless $r=\frac{1}{2}$.

\item If $X$ is closed, then the spectral flow is given by
\begin{equation}\label{E:spf=deg}
\spf(\slaD_{\Theta^*E_0}(0),\Theta^*\bar{\rho})=-\deg(\Theta).
\end{equation}
\end{enumerate}

We can now formulate the following odd-dimensional analogue of \cite[Lemma~5.1]{CeccZeid24GT}.

\begin{lemma}\label{L:GL pair}
Let $(M,g)$ be an $n$-dimensional ($n\ge3$ odd) compact Riemannian spin manifold with boundary and $\Phi:M\to\rmS^n$ be a smooth area-decreasing map that is locally constant near $\pM$ and of non-zero degree. Set
\[
l=\dist_{g}(\supp(\D\Phi),\pM)>0.
\]
Then there exist a Gromov--Lawson pair $(E,F)$ and a function $\rho=\rho^+\oplus\rho^-\in C^\infty(M,U(k)\oplus U(k))$ as in Theorem~\ref{T:GL pair spf}, where $k=\rank E=\rank F$, such that
\begin{enumerate}
\item $(E,F)$ has support $K:=\{p\in M\;|\;\dist_g(p,\pM)\ge l\}\supset\supp(\D\Phi)$; \label{item:GL pair 1}

\item $\rho^+$ induces a family of Hermitian connections $\nabla^E(r):=\nabla^E+r(\rho^+)^{-1}[\nabla^E,\rho^+]$, $0\le r\le1$ on $E$, and the same is true with $\rho^+$ replaced by $\rho^-$ and $E$ replaced by $F$; \label{item:GL pair 2}

\item For each $r\in[0,1]$ and each $p\in M$,
\[
\calR^E_p(r)\ge-a(p)\cdot\frac{1}{4}n(n-1),\quad\calR^F_p(r)\equiv0,
\]
and the inequality is strict unless $r=\frac{1}{2}$; and \label{item:GL pair 3}

\item $\spf(\slaD_{V(E,F)},\tilde{\rho})\ne0$, where $V(E,F)\to\double M$ is defined in Section~\ref{SS:Callias GL pair} and $\tilde{\rho}\in C^\infty(\double M,U(k))$ is the smooth gluing of $\rho^+$ and $\rho^-$. \label{item:GL pair 4}
\end{enumerate}
\end{lemma}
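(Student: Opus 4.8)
The strategy is to adapt the construction of Cecchini--Zeidler's \cite[Lemma~5.1]{CeccZeid24GT} (which is the even-dimensional case using the complex spinor bundle of $\rmS^n$) to the odd-dimensional spectral-flow setting of Li--Su--Wang \cite{LiSuWang24}. First I would replace the given map $\Phi$ by a more convenient one. Since $\Phi$ is locally constant near $\pM$ and $K=\{p\in M\mid\dist_g(p,\pM)\ge l\}$ contains $\supp(\D\Phi)$ in its interior, one can homotope $\Phi$ (rel a neighborhood of $\supp(\D\Phi)$) to a smooth map $\Phi'$ that is genuinely \emph{constant}, say equal to a fixed point $x_0\in\rmS^n$, on $M\setminus K'$ for a slightly smaller compact set $K\Subset K'\Subset M^\circ$; this does not change the degree and keeps it area-decreasing outside $\supp(\D\Phi)$ (where $\D\Phi=0$ anyway). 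Applying the discussion preceding the lemma to $X=M$ and $\Theta=\Phi'$, we obtain the trivial bundle $E_0\to\rmS^n$, the unitary-valued function $\bar\rho$ on $\rmS^n$, and set $E:=(\Phi')^*E_0$ with the family of connections $\nabla^E(r)$ induced by $\rho^+:=(\Phi')^*\bar\rho$. Because $\Phi'$ is constant on $M\setminus K'$, both $E$ and $\rho^+$ are (canonically) trivial there, so we may take $F$ to be the trivial bundle of rank $k=\rank E_0$ with the trivial connection and $\rho^-$ the constant function $\id$; the parallel unitary isomorphism $\frakt:E_{|M\setminus K'}\to F_{|M\setminus K'}$ is the canonical trivialization, extended smoothly inward. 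This gives items~\ref{item:GL pair 1} and \ref{item:GL pair 2} directly, with support exactly $K$ (after shrinking the collar, or simply noting $K\subset K'$ and $\supp(\D\Phi)\subset K$).

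For item~\ref{item:GL pair 3}: the estimate $\calR^E_p(r)\ge-a(p)\cdot\frac14 n(n-1)$ with strictness unless $r=\tfrac12$ is exactly property~(1) recalled from \cite[Sections~3.2 and 3.3]{LiSuWang24}, where $a(p)$ is the area contraction constant of $\Phi'$ at $p$; since $\Phi'=\Phi$ on $\supp(\D\Phi)$ and both maps are area-decreasing (constant maps trivially so), $a(p)\le1$ everywhere, consistent with the hypotheses. The identity $\calR^F_p(r)\equiv0$ is immediate since $F$ carries the flat trivial connection and $\rho^-\equiv\id$, so $\nabla^F(r)=\D$ for all $r$ and its curvature vanishes.

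For item~\ref{item:GL pair 4}, the non-vanishing of the spectral flow: here I invoke Theorem~\ref{T:GL pair spf}, which identifies $\spf(\slaD_{V(E,F)},\tilde\rho)$ with $\spf(\calD_{\psi,1},\rho)$ for any admissible potential $\psi$; but more to the point, I would argue directly on the double. The bundle $V(E,F)\to\double M$ coincides with $E=(\Phi')^*E_0$ on (most of) $M$ and with the trivial bundle on $M^-$, glued via $\frakt$; since $\Phi'$ is constant near $\pM$, the map $\Phi':M\to\rmS^n$ extends to a map $\widehat\Phi:\double M\to\rmS^n$ that is constant ($\equiv x_0$) on $M^-$, and one checks $V(E,F)\cong\widehat\Phi^*E_0$ with $\tilde\rho=\widehat\Phi^*\bar\rho$ (the gluing of $\rho^+$ and $\rho^-\equiv\id$ matches the pullback because $\bar\rho(x_0)$ can be normalized to $\id$). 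Then by property~(2) of the Li--Su--Wang construction, equation~\eqref{E:spf=deg},
\[
\spf(\slaD_{V(E,F)},\tilde\rho)=\spf(\slaD_{\widehat\Phi^*E_0}(0),\widehat\Phi^*\bar\rho)=-\deg(\widehat\Phi)=-\deg(\Phi')=-\deg(\Phi)\ne0,
\]
since $\widehat\Phi$ restricted to the $M$-hemisphere of $\double M$ has the same degree as $\Phi$ (the $M^-$ part being constant contributes nothing), and $\deg(\Phi)\ne0$ by hypothesis.

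\textbf{Main obstacle.} The routine analytic points (area-decreasing is preserved, the collar shrinking, extending $\frakt$ smoothly) are standard, and item~\ref{item:GL pair 3} is quoted verbatim. The step requiring the most care is item~\ref{item:GL pair 4}: one must verify that the abstract double-construction bundle $V(E,F)$ of Section~\ref{SS:Callias GL pair}, built by gluing $E$ and $F$ via $\frakt$ across a \emph{collar} of $\pM$, is genuinely isomorphic (as a bundle with its family of connections, so that the spectral flow is unchanged) to the pullback $\widehat\Phi^*E_0$ under the extended map $\widehat\Phi:\double M\to\rmS^n$ --- and that the degree bookkeeping $\deg(\widehat\Phi)=\deg(\Phi)$ is correct with the orientation conventions used in defining $\double M=M\cup_{\pM}M^-$. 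This is where the choice to make $\Phi'$ literally constant (not merely locally constant) near $\pM$ pays off, since it forces $E$, $F$, $\rho^+$, $\rho^-$ all to be trivial in the collar and makes the gluing tautological; with that reduction in hand, the isomorphism and the degree identity follow from naturality of the pullback construction and the fact that a constant map has degree zero.
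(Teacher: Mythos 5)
There is a genuine gap, and it comes from where you choose to modify the map. You homotope $\Phi$ to a map $\Phi'$ that is constant on $M\setminus K'$, keeping $\Phi'=\Phi$ only on a neighborhood of $\supp(\D\Phi)$, and then set $E=(\Phi')^*E_0$. The homotopy is therefore supported in a transition region $W$ lying \emph{outside} $\supp(\D\Phi)$, and on $W$ the new map $\Phi'$ is no longer locally constant: $\D\Phi'\neq 0$ there, so $\calR^E(r)=(\Phi')^*(\text{curvature of }\nabla(r))$ does not vanish on $W$. But item~\ref{item:GL pair 3} asserts $\calR^E_p(r)\ge -a(p)\cdot\frac14 n(n-1)$ with $a(p)$ the area contraction constant of the \emph{given} map $\Phi$, which is $0$ off $\supp(\D\Phi)$; your bundle only satisfies the weaker bound with $a$ replaced by the area contraction constant of $\Phi'$, which is positive (and a priori uncontrolled) on $W$. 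This is not a cosmetic defect: in the proof of Theorem~\ref{T:neck length} the inequality off $\supp(\D f)$ is exactly borderline ($\frac{n}{n-1}\cdot\frac14\sigma^2n(n-1)+\psi^2-|\D\psi|\ge 0$ with equality possible), so any negative contribution from $\calR^E(r)$ outside $\supp(\D f)$ destroys the contradiction argument. Relatedly, since $E$ fails to be flat on $K'\setminus K\subset M\setminus K$, the parallel isomorphism $\frakt$ to the trivial bundle $F$ only exists on $M\setminus K'$, so the Gromov--Lawson pair has support $K'\supsetneq K$ rather than $K$ as required by item~\ref{item:GL pair 1}; ``shrinking the collar'' cannot fix both problems at once, because the transition region must live somewhere between $\supp(\D\Phi)$ and the set where $\Phi'$ is constant, and wherever it lives it either enlarges the support or pollutes the curvature estimate.

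The paper's construction avoids this by leaving $\Phi$ untouched and instead building a second map $\Psi$ with $\Psi=\Phi$ on $\overline{M\setminus K}$ whose differential kills all $2$-forms everywhere (hence $\deg\Psi=0$ and $F:=\Psi^*E_0$ is flat for the whole family of connections), taking $\rho^-=\Psi^*\bar\rho$; the isomorphism $\frakt$ is then tautological on $M\setminus K$, item~\ref{item:GL pair 3} holds with the area contraction constant of the original $\Phi$, and item~\ref{item:GL pair 4} follows from $\eqref{E:spf=deg}$ applied to $\Theta=\Phi\cup\Psi$ on $\double M$ exactly as in your computation. Your treatment of item~\ref{item:GL pair 4} (identifying $V(E,F)$ with a pullback over the double and using $\eqref{E:spf=deg}$) is the right mechanism; the fix is to modify the comparison map in the interior rather than modifying $\Phi$ near $\pM$. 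Note also that insisting on $\rho^-\equiv\id$ is what forces your problematic homotopy in the first place: if $\Phi$ takes several distinct constant values on the components near $\pM$, then $\rho^+=\Phi^*\bar\rho$ takes several distinct constant unitary values there, and the gluing hypothesis $\rho^+=\rho^-$ near $\pM$ cannot hold with $\rho^-\equiv\id$.
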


\begin{proof}
Part~1 is literally the same as the proof of \cite[Lemma~5.1.(\romnu2)]{CeccZeid24GT}. Briefly speaking, by the locally constant property of $\Phi$, one can construct a smooth map $\Psi:M\to\rmS^n$ such that $\Phi=\Psi$ on $\overline{M\setminus K}$ (whose image is a set of finite points), and the induced map $\Psi^*$ on 2-forms vanishes. In particular, the degree of $\Psi$ is zero. Then the bundles $E:=\Phi^*E_0$ and $F:=\Psi^*E_0$ form a Gromov--Lawson pair with support $K$, where $E_0$ is the trivial bundle over $\rmS^n$ discussed above.

Set $\rho^+=\Phi^*\bar{\rho}$, $\rho^-=\Psi^*\bar{\rho}$ ($\bar{\rho}$ is the aforementioned function on $\rmS^n$) and note that $F$ is always a flat bundle (since $\Psi^*$ is the zero map on 2-forms). Then parts~2 and 3 follow immediately from the discussion above. Also, since $\Phi=\Psi$ is locally constant near $\pM$, we get that $\rho^+=\rho^-$ is locally constant near $\pM$. Thus they can be glued smoothly as discussed before Theorem~\ref{T:GL pair spf}.

To show part~4, let $\Theta:\double M\to \rmS^n$ be the smooth map defined by $\Theta_{|M}=\Phi$ and $\Theta_{|M^-}=\Psi$. Then $\deg(\Theta)=\deg(\Phi)\ne0$. Note that $\Theta^*E_0=V(E,F)$, $\Theta^*\bar{\rho}=\tilde{\rho}$. It follows from \eqref{E:spf=deg} (with $X$ being $\double M$) that
\[
\spf(\slaD_{V(E,F)},\tilde{\rho})=\spf(\slaD_{\Theta^*E_0}(0),\Theta^*\bar{\rho})=-\deg(\Theta)\ne0.
\]
This completes the proof.
\end{proof}

\subsection{Estimate on the length of the neck}\label{SS:neck length}

In this subsection, we prove Theorem~\ref{IT:long neck}, which is reformulated as follows.

\begin{theorem}\label{T:neck length}
Let $(M,g)$ be an $n$-dimensional ($n\ge3$ odd) connected compact Riemannian spin manifold with boundary. Let $f:M\to\rmS^n$ be a smooth area-decreasing map that is locally constant near $\pM$ and of non-zero degree. Assume that $\scal_g\ge n(n-1)$ on $\supp(\D f)$, $\scal_g\ge\sigma^2n(n-1)$ on $M\setminus\supp(\D f)$ for some $\sigma>0$, and that $H_g\ge-\sigma\tan(\frac{1}{2}\sigma nl)$ for some $l\in(0,\frac{\pi}{\sigma n})$. Then $\dist_g(\supp(\D f),\pM)<l$.
\end{theorem}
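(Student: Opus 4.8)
The plan is to argue by contradiction, so suppose $\dist_g(\supp(\D f),\pM)\ge l$. Applying Lemma~\ref{L:GL pair} with $\Phi=f$ (note $\dist_g(\supp(\D f),\pM)>0$ since $f$ is locally constant near $\pM$), we obtain a Gromov--Lawson pair $(E,F)$ with support $K=\{p\in M:\dist_g(p,\pM)\ge \dist_g(\supp(\D f),\pM)\}\supset\supp(\D f)$, together with $\rho=\rho^+\oplus\rho^-$, such that $\calR^E_p(r)\ge-a(p)\tfrac14 n(n-1)$, $\calR^F_p(r)\equiv0$ (strict unless $r=\tfrac12$), and crucially $\spf(\slaD_{V(E,F)},\tilde\rho)=-\deg(f)\ne0$. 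Then Theorem~\ref{T:GL pair spf} gives, for any admissible potential $\psi$,
\[
\spf(\calD_{\psi,1},\rho)=\spf(\slaD_{V(E,F)},\tilde\rho)\ne0.
\]
The strategy is to construct an admissible potential $\psi$ for which Lemma~\ref{L:spf vanishing-1} applies, forcing $\spf(\calD_{\psi,1},\rho)=0$, a contradiction.

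To build $\psi$, I would work with the distance function $t(p)=\dist_g(p,\pM)$, which is a $1$-Lipschitz function. Since $f$ is locally constant near $\pM$, $\D f$ is supported in $\{t\ge l\}$ (after possibly shrinking, $\{t\ge \dist_g(\supp(\D f),\pM)\}\subseteq\{t\ge l\}$), and the support $K$ of the Gromov--Lawson pair sits inside $\{t\ge l\}$. On the collar $\{0\le t\le l\}$ one has $\scal_g\ge\sigma^2 n(n-1)$. Following Cecchini--Zeidler, I would set $\psi=h\circ t$ for a suitable Lipschitz function $h:[0,\infty)\to\RR$ with $h\equiv 0$ on $[l,\infty)$ (so $\psi$ vanishes on $K$, as required, and is an admissible potential, being constant $=0$ outside a compact set containing $L$) and $h<0$ on $[0,l)$. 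The natural choice, dictated by the warped-product model $\varphi(t)=\cos^{2/n}(\tfrac12\sigma n t)$ of Remark~\ref{IR:long neck sharp}, is (a shift of) $h(t)=-\tfrac{(n-1)\sigma}{2}\tan(\tfrac12\sigma n(l-t))$ on $[0,l]$; this gives $|\psi'|=|h'|\le\tfrac{n(n-1)\sigma^2}{4}\sec^2(\tfrac12\sigma n(l-t))$ and $\psi^2=\tfrac{(n-1)^2\sigma^2}{4}\tan^2(\tfrac12\sigma n(l-t))$, and the choice of constant $\tfrac{n-1}{2}$ is exactly what makes the curvature terms close.

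The key estimate is then the pointwise bundle inequality on the collar:
\[
\tfrac{n}{n-1}\calR(r)+\psi^2-|\D\psi|\ \ge\ \tfrac{n}{n-1}\Big(\tfrac14\scal_g+\calR^{E\oplus F}(r)\Big)+\psi^2-|h'|.
\]
On $\supp(\D f)\subseteq K$ we have $\psi=0$, $\psi'=0$, $a(p)\le 1$, $\scal_g\ge n(n-1)$, and $\calR^{E\oplus F}(r)\ge-a(p)\tfrac14 n(n-1)\,\id\ge-\tfrac14 n(n-1)\id$ (with $\calR^F\equiv 0$), so the whole endomorphism is $\ge\tfrac{n}{n-1}\cdot\tfrac14 n(n-1)(1-1)=0$ and in fact strictly positive for $r\ne\tfrac12$; on all of $K\setminus\supp(\D f)$ one uses $\scal_g\ge\sigma^2 n(n-1)$ plus $\calR^{E\oplus F}(r)\ge-\tfrac14 n(n-1)\id$ and $\psi=\psi'=0$, and on the collar $\{0\le t< l\}$ one uses $a\le1$ (area-decreasing), $\calR^{E\oplus F}(r)\ge-\tfrac14 n(n-1)\id$, $\scal_g\ge\sigma^2 n(n-1)$, and the computed values of $\psi^2,|h'|$ to check that $\tfrac14(\sigma^2 n(n-1)-n(n-1))\cdot\tfrac{n}{n-1}+\psi^2-|h'|\ge 0$ — the tangent identity $\sec^2=1+\tan^2$ makes this work out, with positivity strict for $r\ne\tfrac12$. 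Together with $|\D\psi|=|h'|$ a.e., this shows $\psi^2-|\D\psi|$ is uniformly positive on all of $M$ (it is strictly positive on the compact collar, and $=$ a positive constant outside $L$), verifying hypothesis~(1) of Lemma~\ref{L:spf vanishing-1}; hypothesis~(2), $\bfs\psi\ge0$ on $\pM$ with $\bfs=1$, requires $\psi_{|\pM}=h(0)\ge0$, which forces me to instead use $\psi=h\circ t$ with $h\ge 0$, i.e. take the \emph{other} sign — concretely $\psi(p)=\tfrac{(n-1)\sigma}{2}\tan(\tfrac12\sigma n(l-t(p)))$ clipped to $0$ for $t\ge l$, which is $\ge 0$ everywhere and meets $H_g+\tfrac{2}{n-1}\sigma^{-1}\cdot$(the relevant combination)$\ge 0$ precisely under the hypothesis $H_g\ge-\sigma\tan(\tfrac12\sigma nl)$; I also need the boundary term $\tfrac12 nH+\bfs\psi\ge 0$ from Proposition~\ref{P:sp est Callias}, which is where $\psi_{|\pM}=\tfrac{(n-1)\sigma}{2}\tan(\tfrac12\sigma n l)$ and $H_g\ge-\sigma\tan(\tfrac12\sigma n l)$ combine (using $\tfrac12 n\cdot(-\sigma\tan)+\tfrac{n-1}{2}\sigma\tan<0$ — so actually one must be more careful and the sharp constant demands $\tfrac{n}{2}H+\psi\ge0$, i.e. $\psi\ge\tfrac n2\sigma\tan(\tfrac12\sigma nl)$, forcing the coefficient $\tfrac{n}{?}$; the precise normalization is exactly the content of Cecchini--Zeidler's computation adapted to the $\sigma$-rescaled model). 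Applying Lemma~\ref{L:spf vanishing-1}: the condition there is $\psi^2-|\D\psi|\ge C>0$ on $M$ and $\bfs\psi\ge 0$ on $\pM$, which together yield invertibility of $\calD_{\psi,\bfs}(r)$ for all $r$, hence $\spf(\calD_{\psi,1},\rho)=0$ — contradicting $\spf(\calD_{\psi,1},\rho)\ne0$ above.

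The main obstacle is the ODE bookkeeping: one must exhibit a single Lipschitz profile $h$ on $[0,l]$ whose square and derivative simultaneously satisfy the interior inequality $\tfrac{n}{n-1}\calR(r)+h^2-|h'|\ge0$ (with the right sign of strictness to handle $r=\tfrac12$ separately via the equality case of Proposition~\ref{P:sp est Callias}, as in \cite{CeccZeid24GT}) and the boundary inequality $\tfrac n2 H_g+h(0)\ge0$, given only $H_g\ge-\sigma\tan(\tfrac12\sigma nl)$ and $\scal_g\ge\sigma^2 n(n-1)$ on the collar — and to check that the endpoint $h(l)=0$ matches continuously with the zero potential on $K$ and that $\psi^2-|\D\psi|$ stays uniformly positive across the gluing. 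This is precisely the $\sigma$-rescaled analogue of the argument in \cite[Section~5]{CeccZeid24GT}; once the profile is pinned down (it is forced by the warped-product extremal metric of Remark~\ref{IR:long neck sharp}), the verification is a direct computation using $\tfrac{d}{dt}\tan=\sec^2=1+\tan^2$. For the $r=\tfrac12$ case where strict positivity fails, one argues as in \cite{CeccZeid24GT}: if $\calD_{\psi,1}(\tfrac12)$ had a nonzero kernel element $u$, the equality conditions of Proposition~\ref{P:sp est Callias} would force $u$ to be a nonzero parallel (in a twisted sense) section, but the mismatch between $a(p)<1$ somewhere on $\supp(\D f)$ (else $f$ would be a local isometry, contradicting area-decreasing plus a Gauss--Bonnet/degree count, or one sidesteps this as in \cite{LiSuWang24}) rules this out; alternatively one perturbs $\psi$ slightly and invokes Lemma~\ref{L:spf homotop inv}.
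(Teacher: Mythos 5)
Your opening moves (contradiction, Lemma~\ref{L:GL pair}, Theorem~\ref{T:GL pair spf} to get $\spf(\calD_{\psi,1},\rho)\ne0$) match the paper, but your closing mechanism has a genuine gap: you cannot conclude via Lemma~\ref{L:spf vanishing-1}. That lemma requires $\psi^2-|\D\psi|\ge C>0$ on \emph{all} of $M$, with no help from curvature, and this is impossible here: by definition an admissible potential must vanish on the support $K\supset\supp(\D f)$ of the relative Dirac bundle, so $\psi^2-|\D\psi|\equiv0$ on the interior of $K$. Your sentence ``this shows $\psi^2-|\D\psi|$ is uniformly positive on all of $M$'' conflates the hypothesis of Lemma~\ref{L:spf vanishing-1} with the non-negativity of the full Weitzenb\"ock endomorphism $\frac{n}{n-1}\calR(r)+\psi^2-|\D\psi|$; only the latter can be arranged. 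The paper's route is therefore different in its second half: since the spectral flow is non-zero, some $\calD_{\psi,1}(r)$ has a non-trivial kernel element $u$, and one applies the integral estimate of Proposition~\ref{P:sp est Callias} (whose curvature term supplies exactly the missing positivity, and whose boundary term is killed by $\frac12 nH_g+\psi_{|\pM}\ge0$) to force $u=0$ on an open subset of $\supp(\D f)$ together with the equality conditions, whence $u\equiv0$ by \cite[Remark~4.5]{CeccZeid24GT} --- a contradiction. Your final paragraph gestures at this for $r=\tfrac12$ only, but it is in fact the entire argument, for every $r$.

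Your potential profile is also mis-normalized. The paper takes $\psi=h(x)$ with $x(p)=\min\{\dist_g(K,p),l\}$ (distance \emph{from} $K$, increasing toward $\pM$) and $h(t)=\frac12\sigma n\tan(\frac12\sigma nt)$, so that $h^2-h'\equiv-\frac14\sigma^2n^2$ is exactly compensated by $\frac{n}{n-1}\cdot\frac14\sigma^2n(n-1)$ on the collar, and $\psi_{|\pM}=\frac12\sigma n\tan(\frac12\sigma nl)\ge-\frac12 nH_g$. With your coefficient $\frac{(n-1)\sigma}{2}$ one gets $h^2-|h'|=-\frac{(n-1)\sigma^2}{4}(n+\tan^2)$, so the interior inequality reduces to $\frac{\sigma^2}{4}\bigl(n-(n-1)\tan^2\bigr)\ge0$, which fails near $\pM$ once $\tan^2(\frac12\sigma n l)>\frac{n}{n-1}$; and, as you yourself note, the boundary inequality fails too. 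You leave the coefficient unresolved (``forcing the coefficient $\tfrac{n}{?}$''), but it is forced to be $\tfrac{n}{2}\sigma$, and pinning it down is not optional bookkeeping --- it is where the hypotheses $\scal_g\ge\sigma^2n(n-1)$ and $H_g\ge-\sigma\tan(\frac12\sigma nl)$ are actually used.
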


\begin{proof}
We prove by contradiction. Suppose $\dist_g(\supp(\D f),\pM)\ge l$. Pick a Gromov--Lawson pair $(E,F)$ with support $K$ that satisfies the conditions of Lemma~\ref{L:GL pair}. Let $D$ be the twisted spin Dirac operator of \eqref{E:twist Dirac}. As in \cite[Section~5]{CeccZeid24GT}, construct the admissible potential $\psi:=h(x)$, where $h(t)=\frac{1}{2}\sigma n\tan(\frac{1}{2}\sigma nt)$ and
\[
x:M\to[0,l],\quad x(p):=\min\{\dist_g(K,p),l\}.
\]
So we get
\[
\psi^2-|\D\psi|\ge h(x)^2-h'(x)=-\frac{1}{4}\sigma^2n^2
\]
almost everywhere on $M$ and
\begin{equation}\label{E:psi > mean cur}
\psi_{|\pM}=\frac{1}{2}\sigma n\tan\Big(\frac{1}{2}\sigma nl\Big)\ge-\frac{1}{2}nH_g.
\end{equation}

Consider the Callias operator $\calD_\psi=D+\psi\theta$, where $\theta$ is given in \eqref{E:GL pair rel Dirac}. Impose the chiral boundary condition with the choice of signs $\bfs=1$. By Theorem~\ref{T:GL pair spf} and Lemma~\ref{L:GL pair}, there exists a function $\rho=\rho^+\oplus\rho^-\in C^\infty(M,U(k)\oplus U(k))$ such that
\[
\spf(\calD_{\psi,1},\rho)\ne0.
\]
This means that there exist some $r\in[0,1]$ and $0\ne u\in\ker(\calD_{\psi,1}(r))$ (as an $L^2$-section). However, using the spectral estimate Proposition~\ref{P:sp est Callias} and repeating the argument of \cite[Proof of Theorem~1.4]{CeccZeid24GT}, we can deduce that this is impossible. Roughly speaking, by Proposition~\ref{P:sp est Callias} and \eqref{E:psi > mean cur}, one has
\[
0\ge\frac{n}{n-1} \int_M\Big(\frac{1}{4}\scal_g|u|^2+\langle u,\calR^{E\oplus F}(r)u\rangle\Big)\D V_M+\int_M\langle u,(\psi^2-|\D\psi|)u\rangle \D V_M.
\]
From this, Lemma~\ref{L:GL pair}.3 and the hypothesis on the lower bound of the scalar curvature, one gets that $u=0$ on some non-empty open subset of $\supp(\D f)$ (where the area contraction constant $a(p)$ is strictly less than 1) and at the same time satisfies the equality condition of Proposition~\ref{P:sp est Callias}. It then follows from \cite[Remark~4.5]{CeccZeid24GT} that $u$ vanishes almost everywhere on $M$---a contradiction! And the theorem is proved.
\end{proof}

\section{A quantitative Llarull's theorem on non-compact manifolds}\label{S:quanti Llarull}

In this section, we employ ideas from last section to prove the quantitative Llarull's theorem (Theorem~\ref{IT:quanti Llarull}) on complete non-compact spin manifolds. The point is to apply the index theory of Callias operators from a Gromov--Lawson pair with the potential given by a distance-related function. 

\begin{proof}[Proof of Theorem~\ref{IT:quanti Llarull}]
Again prove by contradiction and suppose
\begin{equation}\label{E:scal ge tan^2}
\scal_g\ge-\sigma^2n(n-1)\tan^2\Big(\frac{1}{2}\sigma n\delta\Big)\mbox{ on }M.
\end{equation}
For the reader's convenience, we repeat the hypothesis on the lower bound of the scalar curvature as follows:
\begin{eqnarray}
&\scal_g\ge n(n-1)\mbox{ on }\supp(\D f),\label{E:scal bound supp}\\ 
&\scal_g\ge\sigma^2n(n-1)\mbox{ on }K\setminus\supp(\D f). \label{E:scal bound K}
\end{eqnarray}

We mainly demonstrate the proof for $n$ odd.
Let $L\Subset M$ be another compact subset with smooth boundary such that $K\subset L^\circ$. As in Lemma~\ref{L:GL pair}, we can construct a Gromov--Lawson pair $(E,F)$ on $L$ with support $\supp(\D f)$ and a function $\rho=\rho^+\oplus\rho^-\in C^\infty(L,U(k)\oplus U(k))$ that satisfy conditions~1--4 of Lemma~\ref{L:GL pair}. By the triviality of $(E,F)$ and the fact that $\rho$ is locally constant outside $\supp(\D f)$, they can be extended trivially to the whole of $M$.

Like in last section, construct an admissible potential $\psi:=h(x)$, where $h(t)=\frac{1}{2}\sigma n\tan(\frac{1}{2}\sigma nt)$ and
\[
x:M\to[0,\delta],\quad x(p):=\min\{\dist_g(\supp(\D f),p),\delta\}.
\]
Then
\begin{equation}\label{E:psi^2-dpsi bound}
\psi^2-|\D\psi|\ge-\frac{1}{4}\sigma^2n^2
\end{equation}
almost everywhere on $M$ and
\begin{equation}\label{E:psi on M-K}
(\psi^2-|\D\psi|)_{|M\setminus K}=\psi^2_{|M\setminus K}\equiv\frac{1}{4}\sigma^2n^2\tan^2\Big(\frac{1}{2}\sigma n\delta\Big).
\end{equation}
Consider the Callias operator $\calD_\psi=D+\psi\theta$ on $M$, where $D$ and $\theta$ are given in \eqref{E:twist Dirac} and \eqref{E:GL pair rel Dirac}, respectively. Let $\rho_M$ denote the trivial extension of $\rho$ to $M$. We are concerned about the spectral flow $\spf(\calD_\psi,\rho_M)$.

If we cut $M$ along $\p L$, then we get two manifolds $L$ and $L':=\overline{M\setminus L}$ with compact boundary. By Corollary~\ref{C:spf Callias} and Lemma~\ref{L:spf vanishing-1},
\[
\spf(\calD_\psi,\rho_M)=\spf(\calD_{\psi,1}^L,\rho)+\spf(\calD_{\psi,1}^{L'},\rho_{L'})=\spf(\calD_{\psi,1}^L,\rho),
\]
where $\rho_{L'}$ is the restriction of $\rho_M$ to $L'$. The right hand side is non-zero by Theorem~\ref{T:GL pair spf} and Lemma~\ref{L:GL pair}.4. Hence, $\spf(\calD_\psi,\rho_M)\ne0$.

On the other hand, let
\[
\calD_\psi(r)=(1-r)\calD_\psi+r\rho_M^{-1}\calD_\psi\rho_M=D(r)+\psi\theta,\quad 0\le r\le1.
\]
So
\[
\begin{aligned}
\calD_\psi^2(r) &=D^2(r)+\psi^2+\rmc(\D\psi)\theta \\
&=(\nabla^*\nabla)(r)+\calR(r)+\psi^2+\rmc(\D\psi)\theta,
\end{aligned}
\]
where $\calR(r)=\frac{1}{4}\scal_g+\calR^{E\oplus F}(r)$. Examining the bundle endomorphism
\[
\frac{n}{n-1}\calR(r)+\psi^2+\rmc(\D\psi)\theta\ge\frac{n}{n-1}\calR(r)+\psi^2-|\D\psi|,
\]
we have for any $r\in[0,1]$,
\[
\frac{n}{n-1}\calR(r)+\psi^2-|\D\psi|=\frac{n}{n-1}\Big(\frac{1}{4}\scal_g+\calR^{E\oplus F}(r)\Big)\ge 0\mbox{ on }\supp(\D f)
\]
from Lemma~\ref{L:GL pair}.3 and \eqref{E:scal bound supp},
\[
\frac{n}{n-1}\calR(r)+\psi^2-|\D\psi|=\frac{n}{n-1}\cdot\frac{1}{4}\scal_g+\psi^2-|\D\psi|\ge0\mbox{ on }K\setminus\supp(\D f)
\]
from \eqref{E:scal bound K} and \eqref{E:psi^2-dpsi bound}, and
\[
\frac{n}{n-1}\calR(r)+\psi^2-|\D\psi|=\frac{n}{n-1}\cdot\frac{1}{4}\scal_g+\psi^2\ge0\mbox{ on }M\setminus K
\]
from \eqref{E:scal ge tan^2} and \eqref{E:psi on M-K}.
Now by Remark~\ref{R:sp est Callias}, for any $u\in\ker(\calD_\psi(r))$, we have
\[
0\ge\int_M\Big\langle u,\Big(\frac{n}{n-1}\calR(r)+\psi^2-|\D\psi|\Big)u\Big\rangle \D V_M.
\]
Again as mentioned in the proof of Theorem~\ref{T:neck length}, one can deduce that $u$ vanishes almost everywhere on $M$, which means that $\spf(\calD_\psi,\rho_M)=0$---a contradiction! This proves the theorem for $n$ odd.

When $n$ is even, one considers instead the index of a single Callias operator from the relative Dirac bundle of the form \cite[Example~2.5]{CeccZeid24GT} associated to the same Gromov--Lawson pair. The computation and argument are essentially the same as above (and indeed a little simpler). To sum up, the theorem is proved.
\end{proof}

\begin{remark}\label{R:quanti Llarull}
Theorem~\ref{IT:quanti Llarull} can be viewed as a codimension zero analogue of \cite[Theorem~3.1]{Zeidler20}. Notice that we rule out the equality case in our conclusion.
\end{remark}

Theorem~\ref{IT:quanti Llarull} now indicates a new proof of Llarull's theorem on non-compact manifolds.

\begin{proof}[Proof of Theorem~\ref{IT:Llarull non-cpt}]
By the hypothesis
\[\scal_g\ge n(n-1)\mbox{ on }\supp(\D f),
\]
one can find a compact subset $K\Subset M$ containing $\supp(\D f)$ with smooth boundary such that $0<\delta:=\dist_g(\supp(\D f),\p K)<\frac{2\pi}{n}$ and
\[
\scal_g\ge\frac{1}{4}n(n-1)\mbox{ on }K.
\]
Then by Theorem~\ref{IT:quanti Llarull},
\[
\inf(\scal_g)<-\frac{1}{4}n(n-1)\tan^2\Big(\frac{1}{4}n\delta\Big)<0.
\]
\end{proof}

\begin{remark}\label{R:Llarull non-cpt}
Theorem~\ref{IT:Llarull non-cpt} can also be proved in a more accessible way without using Callias operators with \emph{Lipschitz} potential. Basically, having a compact subset $K$ as above, by modifying the distance function, it is possible to find a \emph{smooth} function $\psi:M\to[0,\varepsilon]$ for some $\varepsilon>0$ depending on $\delta$ such that
\begin{enumerate}
\item $\psi\equiv0$ on $\supp(\D f)$, \label{item:Llarull non-cpt 1}
\item $\psi\equiv\varepsilon$ outside $K$, and \label{item:Llarull non-cpt 2}
\item $\psi^2-|\D\psi|\ge-\frac{1}{8}n(n-1)$ on $K\setminus\supp(\D f)$. \label{item:Llarull non-cpt 3}
\end{enumerate}
Now use such $\psi$ as the potential of Callias operators instead. Again by Lemma~\ref{L:GL pair} and the hypothesis on the lower bound of the scalar curvature, one can show that the bundle endomorphism $\calD^2_\psi(r)-(\nabla^*\nabla)(r)$ is non-negative on the whole of $M$ and has a positive lower bound $\varepsilon\cdot\id$ on $M\setminus K$ for any $r\in[0,1]$. Thus the usual contradiction argument works. The idea is similar to that in \cite{Zhang20}.
\end{remark}

\section*{Funding}
This work is supported in part by the NSFC (grant no. 12101042) and Beijing Institute of Technology Research Fund Program for Young Scholars.

\section*{Acknowledgments}
The author is grateful to the referee for helpful comments and suggestions that improve the paper.

\section*{Author contributions statement}
P.S. conducted the research and wrote the manuscript.

\section*{Competing interests}
No competing interest is declared.

\bibliographystyle{amsplain}

\begin{bibdiv}
\begin{biblist}

\bib{Anghel93Callias}{article}{
      author={Anghel, N.},
       title={On the index of {C}allias-type operators},
        date={1993},
        ISSN={1016-443X,1420-8970},
     journal={Geom. Funct. Anal.},
      volume={3},
      number={5},
       pages={431\ndash 438},
         url={https://doi.org/10.1007/BF01896237},
      review={\MR{1233861}},
}

\bib{BaerBallmann12}{incollection}{
      author={B\"{a}r, C.},
      author={Ballmann, W.},
       title={Boundary value problems for elliptic differential operators of
  first order},
        date={2012},
   booktitle={Surveys in differential geometry. {V}ol. {XVII}},
      series={Surv. Differ. Geom.},
      volume={17},
   publisher={Int. Press, Boston, MA},
       pages={1\ndash 78},
         url={https://doi.org/10.4310/SDG.2012.v17.n1.a1},
      review={\MR{3076058}},
}

\bib{BaerBallmann16}{incollection}{
      author={B\"{a}r, C.},
      author={Ballmann, W.},
       title={Guide to elliptic boundary value problems for {D}irac-type
  operators},
        date={2016},
   booktitle={Arbeitstagung {B}onn 2013},
      series={Progr. Math.},
      volume={319},
   publisher={Birkh\"{a}user/Springer, Cham},
       pages={43\ndash 80},
         url={https://doi.org/10.1007/978-3-319-43648-7_3},
      review={\MR{3618047}},
}

\bib{BoossLeschPhillips05}{article}{
      author={Booss-Bavnbek, B.},
      author={Lesch, M.},
      author={Phillips, J.},
       title={Unbounded {F}redholm operators and spectral flow},
        date={2005},
        ISSN={0008-414X,1496-4279},
     journal={Canad. J. Math.},
      volume={57},
      number={2},
       pages={225\ndash 250},
         url={https://doi.org/10.4153/CJM-2005-010-1},
      review={\MR{2124916}},
}

\bib{BHMMM15book}{book}{
      author={Bourguignon, J.-P.},
      author={Hijazi, O.},
      author={Milhorat, J.-L.},
      author={Moroianu, A.},
      author={Moroianu, S.},
       title={A spinorial approach to {R}iemannian and conformal geometry},
      series={EMS Monogr. Math.},
   publisher={European Mathematical Society (EMS), Z\"urich},
        date={2015},
        ISBN={978-3-03719-136-1},
         url={https://doi.org/10.4171/136},
      review={\MR{3410545}},
}

\bib{BruningLesch99}{article}{
      author={Br\"{u}ning, J.},
      author={Lesch, M.},
       title={On the {$\eta$}-invariant of certain nonlocal boundary value
  problems},
        date={1999},
        ISSN={0012-7094},
     journal={Duke Math. J.},
      volume={96},
      number={2},
       pages={425\ndash 468},
         url={https://doi.org/10.1215/S0012-7094-99-09613-8},
      review={\MR{1666570}},
}

\bib{Bunke95}{article}{
      author={Bunke, U.},
       title={A {$K$}-theoretic relative index theorem and {C}allias-type
  {D}irac operators},
        date={1995},
        ISSN={0025-5831},
     journal={Math. Ann.},
      volume={303},
      number={2},
       pages={241\ndash 279},
         url={https://doi.org/10.1007/BF01460989},
      review={\MR{1348799}},
}

\bib{Bunke95eta}{article}{
      author={Bunke, U.},
       title={On the gluing problem for the {$\eta$}-invariant},
        date={1995},
        ISSN={0022-040X},
     journal={J. Differential Geom.},
      volume={41},
      number={2},
       pages={397\ndash 448},
         url={http://projecteuclid.org/euclid.jdg/1214456222},
      review={\MR{1331973}},
}

\bib{Callias78}{article}{
      author={Callias, C.},
       title={Axial anomalies and index theorems on open spaces},
        date={1978},
        ISSN={0010-3616,1432-0916},
     journal={Comm. Math. Phys.},
      volume={62},
      number={3},
       pages={213\ndash 234},
         url={http://projecteuclid.org/euclid.cmp/1103904395},
      review={\MR{507780}},
}

\bib{CappellLeeMiller96-2}{article}{
      author={Cappell, S.~E.},
      author={Lee, R.},
      author={Miller, E.~Y.},
       title={Self-adjoint elliptic operators and manifold decompositions.
  {II}. {S}pectral flow and {M}aslov index},
        date={1996},
        ISSN={0010-3640,1097-0312},
     journal={Comm. Pure Appl. Math.},
      volume={49},
      number={9},
       pages={869\ndash 909},
  url={https://doi.org/10.1002/(SICI)1097-0312(199609)49:9<869::AID-CPA1>3.0.CO;2-5},
      review={\MR{1399200}},
}

\bib{Cecchini20LN}{article}{
      author={Cecchini, S.},
       title={A long neck principle for {R}iemannian spin manifolds with
  positive scalar curvature},
        date={2020},
        ISSN={1016-443X},
     journal={Geom. Funct. Anal.},
      volume={30},
      number={5},
       pages={1183\ndash 1223},
         url={https://doi.org/10.1007/s00039-020-00545-1},
      review={\MR{4181824}},
}

\bib{CHS22lip}{article}{
      author={Cecchini, S.},
      author={Hanke, B.},
      author={Schick, T.},
       title={Lipschitz rigidity for scalar curvature},
        date={2024},
     journal={J. Eur. Math. Soc. (JEMS), published online first},
}

\bib{CWXZ24}{article}{
      author={Cecchini, S.},
      author={Wang, J.},
      author={Xie, Z.},
      author={Zhu, B.},
       title={Scalar curvature rigidity of the four-dimensional sphere},
        date={2024},
     journal={https:},
        ISSN={/arxiv.or},
      eprint={https://arxiv.org/abs/2402.12633},
         url={https://arxiv.org/abs/2402.12633},
}

\bib{CeccZeid24GT}{article}{
      author={Cecchini, S.},
      author={Zeidler, R.},
       title={Scalar and mean curvature comparison via the {D}irac operator},
        date={2024},
        ISSN={1465-3060},
     journal={Geom. Topol.},
      volume={28},
      number={3},
       pages={1167\ndash 1212},
         url={https://doi.org/10.2140/gt.2024.28.1167},
      review={\MR{4746412}},
}

\bib{DanielKirk99}{article}{
      author={Daniel, M.},
      author={Kirk, P.},
       title={A general splitting formula for the spectral flow},
        date={1999},
        ISSN={0026-2285,1945-2365},
     journal={Michigan Math. J.},
      volume={46},
      number={3},
       pages={589\ndash 617},
         url={https://doi.org/10.1307/mmj/1030132481},
        note={With an appendix by K. P. Wojciechowski},
      review={\MR{1721567}},
}

\bib{Furutani06aps-spf}{article}{
      author={Furutani, K.},
       title={Atiyah-{P}atodi-{S}inger boundary condition and a splitting
  formula of a spectral flow},
        date={2006},
        ISSN={0393-0440,1879-1662},
     journal={J. Geom. Phys.},
      volume={56},
      number={2},
       pages={310\ndash 321},
         url={https://doi.org/10.1016/j.geomphys.2005.02.003},
      review={\MR{2173899}},
}

\bib{FurutaniOtsuki02}{article}{
      author={Furutani, K.},
      author={Otsuki, N.},
       title={Maslov index in the infinite dimension and a splitting formula
  for a spectral flow},
        date={2002},
        ISSN={0289-2316},
     journal={Japan. J. Math. (N.S.)},
      volume={28},
      number={2},
       pages={215\ndash 243},
         url={https://doi.org/10.4099/math1924.28.215},
      review={\MR{1947902}},
}

\bib{Getzler93}{article}{
      author={Getzler, E.},
       title={The odd {C}hern character in cyclic homology and spectral flow},
        date={1993},
        ISSN={0040-9383},
     journal={Topology},
      volume={32},
      number={3},
       pages={489\ndash 507},
         url={https://doi.org/10.1016/0040-9383(93)90002-D},
      review={\MR{1231957}},
}

\bib{GoetteSem02}{article}{
      author={Goette, S.},
      author={Semmelmann, U.},
       title={Scalar curvature estimates for compact symmetric spaces},
        date={2002},
        ISSN={0926-2245,1872-6984},
     journal={Differential Geom. Appl.},
      volume={16},
      number={1},
       pages={65\ndash 78},
         url={https://doi.org/10.1016/S0926-2245(01)00068-7},
      review={\MR{1877585}},
}

\bib{GorokhovskyLesch15}{article}{
      author={Gorokhovsky, A.},
      author={Lesch, M.},
       title={On the spectral flow for {D}irac operators with local boundary
  conditions},
        date={2015},
        ISSN={1073-7928,1687-0247},
     journal={Int. Math. Res. Not. IMRN},
      number={17},
       pages={8036\ndash 8051},
         url={https://doi.org/10.1093/imrn/rnu188},
      review={\MR{3404008}},
}

\bib{Gromov18metric}{article}{
      author={Gromov, M.},
       title={Metric inequalities with scalar curvature},
        date={2018},
        ISSN={1016-443X},
     journal={Geom. Funct. Anal.},
      volume={28},
      number={3},
       pages={645\ndash 726},
         url={https://doi.org/10.1007/s00039-018-0453-z},
      review={\MR{3816521}},
}

\bib{Gromov23Four}{incollection}{
      author={Gromov, M.},
       title={Four lectures on scalar curvature},
        date={[2023] \copyright 2023},
   booktitle={Perspectives in scalar curvature. {V}ol. 1},
   publisher={World Sci. Publ., Hackensack, NJ},
       pages={1\ndash 514},
      review={\MR{4577903}},
}

\bib{GromovLawson83}{article}{
      author={Gromov, M.},
      author={Lawson, H.~B., Jr.},
       title={Positive scalar curvature and the {D}irac operator on complete
  {R}iemannian manifolds},
        date={1983},
        ISSN={0073-8301},
     journal={Inst. Hautes \'{E}tudes Sci. Publ. Math.},
      number={58},
       pages={83\ndash 196 (1984)},
         url={http://www.numdam.org/item?id=PMIHES_1983__58__83_0},
      review={\MR{720933}},
}

\bib{KirkLesch04}{article}{
      author={Kirk, P.},
      author={Lesch, M.},
       title={The {$\eta$}-invariant, {M}aslov index, and spectral flow for
  {D}irac-type operators on manifolds with boundary},
        date={2004},
        ISSN={0933-7741},
     journal={Forum Math.},
      volume={16},
      number={4},
       pages={553\ndash 629},
         url={https://doi.org/10.1515/form.2004.027},
      review={\MR{2044028}},
}

\bib{LawMic89}{book}{
      author={Lawson, H.~B., Jr.},
      author={Michelsohn, M.-L.},
       title={Spin geometry},
      series={Princeton Math. Ser.},
   publisher={Princeton University Press, Princeton, NJ},
        date={1989},
      volume={38},
        ISBN={0-691-08542-0},
      review={\MR{1031992}},
}

\bib{Lesch05sf}{incollection}{
      author={Lesch, M.},
       title={The uniqueness of the spectral flow on spaces of unbounded
  self-adjoint {F}redholm operators},
        date={2005},
   booktitle={Spectral geometry of manifolds with boundary and decomposition of
  manifolds},
      series={Contemp. Math.},
      volume={366},
   publisher={Amer. Math. Soc., Providence, RI},
       pages={193\ndash 224},
         url={https://doi.org/10.1090/conm/366/06730},
      review={\MR{2114489}},
}

\bib{LiSuWang24}{article}{
      author={Li, Y.},
      author={Su, G.},
      author={Wang, X.},
       title={Spectral flow, {L}larull's rigidity theorem in odd dimensions and
  its generalization},
        date={2024},
        ISSN={1674-7283},
     journal={Sci. China Math.},
      volume={67},
      number={5},
       pages={1103\ndash 1114},
         url={https://doi.org/10.1007/s11425-023-2138-5},
      review={\MR{4739559}},
}

\bib{LiSuWangZhang24}{article}{
      author={Li, Y.},
      author={Su, G.},
      author={Wang, X.},
      author={Zhang, W.},
       title={Llarull's theorem on odd dimensional manifolds: the noncompact
  case},
        date={2024},
     journal={https:},
        ISSN={/arxiv.or},
      eprint={https://arxiv.org/abs/2404.18153},
         url={https://arxiv.org/abs/2404.18153},
}

\bib{LiuDQ24pams}{article}{
      author={Liu, D.},
       title={A note on the long neck principle and spectral width inequality
  of geodesic collar neighborhoods},
        date={2024},
        ISSN={0002-9939,1088-6826},
     journal={Proc. Amer. Math. Soc.},
      volume={152},
      number={7},
       pages={3097\ndash 3110},
         url={https://doi.org/10.1090/proc/16869},
      review={\MR{4753291}},
}

\bib{LiuDQ24mathz}{article}{
      author={Liu, D.},
       title={On the long neck principle and width estimates for initial data
  sets},
        date={2024},
        ISSN={0025-5874,1432-1823},
     journal={Math. Z.},
      volume={307},
      number={3},
       pages={Paper No. 59, 19},
         url={https://doi.org/10.1007/s00209-024-03532-6},
      review={\MR{4761836}},
}

\bib{Llarull98}{article}{
      author={Llarull, M.},
       title={Sharp estimates and the {D}irac operator},
        date={1998},
        ISSN={0025-5831},
     journal={Math. Ann.},
      volume={310},
      number={1},
       pages={55\ndash 71},
         url={https://doi.org/10.1007/s002080050136},
      review={\MR{1600027}},
}

\bib{Lott21pams}{article}{
      author={Lott, J.},
       title={Index theory for scalar curvature on manifolds with boundary},
        date={2021},
        ISSN={0002-9939,1088-6826},
     journal={Proc. Amer. Math. Soc.},
      volume={149},
      number={10},
       pages={4451\ndash 4459},
         url={https://doi.org/10.1090/proc/15551},
      review={\MR{4305995}},
}

\bib{Nicolaescu95}{article}{
      author={Nicolaescu, L.~I.},
       title={The {M}aslov index, the spectral flow, and decompositions of
  manifolds},
        date={1995},
        ISSN={0012-7094},
     journal={Duke Math. J.},
      volume={80},
      number={2},
       pages={485\ndash 533},
         url={http://dx.doi.org/10.1215/S0012-7094-95-08018-1},
      review={\MR{1369400}},
}

\bib{Shi22}{article}{
      author={Shi, P.},
       title={The relative eta invariant for a pair of {D}irac-type operators
  on non-compact manifolds},
        date={2022},
        ISSN={0022-2518},
     journal={Indiana Univ. Math. J.},
      volume={71},
      number={5},
       pages={1923\ndash 1966},
      review={\MR{4509824}},
}

\bib{Shi25oddK}{article}{
      author={Shi, P.},
       title={Spectral flow of {C}allias operators, odd {K}-cowaist, and
  positive scalar curvature},
        date={2025},
        ISSN={0001-8708,1090-2082},
     journal={Adv. Math.},
      volume={479},
       pages={Paper No. 110429},
         url={https://doi.org/10.1016/j.aim.2025.110429},
      review={\MR{4929482}},
}

\bib{Su19}{article}{
      author={Su, G.},
       title={Lower bounds of {L}ipschitz constants on foliations},
        date={2019},
        ISSN={0025-5874,1432-1823},
     journal={Math. Z.},
      volume={293},
      number={1-2},
       pages={417\ndash 423},
         url={https://doi.org/10.1007/s00209-018-2186-y},
      review={\MR{4002283}},
}

\bib{SuWangZhang22}{article}{
      author={Su, G.},
      author={Wang, X.},
      author={Zhang, W.},
       title={Nonnegative scalar curvature and area decreasing maps on complete
  foliated manifolds},
        date={2022},
        ISSN={0075-4102},
     journal={J. Reine Angew. Math.},
      volume={790},
       pages={85\ndash 113},
         url={https://doi.org/10.1515/crelle-2022-0038},
      review={\MR{4472869}},
}

\bib{Zeidler20}{article}{
      author={Zeidler, R.},
       title={Width, largeness and index theory},
        date={2020},
     journal={SIGMA Symmetry Integrability Geom. Methods Appl.},
      volume={16},
       pages={Paper No. 127, 15},
         url={https://doi.org/10.3842/SIGMA.2020.127},
      review={\MR{4181525}},
}

\bib{Zhang20}{article}{
      author={Zhang, W.},
       title={Nonnegative scalar curvature and area decreasing maps},
        date={2020},
     journal={SIGMA Symmetry Integrability Geom. Methods Appl.},
      volume={16},
       pages={Paper No. 033, 7},
         url={https://doi.org/10.3842/SIGMA.2020.033},
      review={\MR{4089513}},
}

\end{biblist}
\end{bibdiv}

\end{document}